\def\drafts{0}
 \newcommand{\precneq}{%
\mathrel{\ooalign{$\preccurlyeq$\cr\kern1.2pt$\nback$}}}
            \newcommand{\rev}[1]{ {  {  #1}}}  
  \newcommand{\draft}[1]{\if\drafts1  {  #1}\fi}  
  \newcommand{\hide}[1]{}  
  \newcommand{\WSS}{weakly stationary }
  \newcommand{\Q}{  {\bf Q}}
 \newcommand{\U}{    {\bf U}}  
  \newcommand{\Mint}[1]{#1}  
  \newcommand{\AP}{uniqueness pair}
   \newcommand{\APs}{uniqueness pairs}  
  \newcommand{\m}{    \mathsf m}  
 \newcommand{\barr}[1]{#1}  
 \newcommand{\X}{   \mathsf X}  
\newcommand{\e}{\mathbf E}
\newtheorem{question}{Question}
\newtheorem{corollary}{Corollary}
\newtheorem{definition}{Definition}
\newtheorem{example}{Example}
\newtheorem{lemma}{Lemma}
\newtheorem{proposition}{Proposition}
\newtheorem{remark}{Remark}
\newtheorem{theorem}{Theorem}
 \newcommand{\E}{  \mathscr  E}
 \newcommand{\z}{  \mathscr  z}  
   \renewcommand{\k}{   \mathsf k}  
   \newcommand{\F}{  \mathscr  F }  
\renewcommand{\P}{\mathsf{P}}
\newcommand{\M}{\mathsf{M}}
\newcommand{\Leb}{\mathcal{L}}
\newcommand{\C}{\mathcal C}
\newcommand{\x}{{\bf x}}
\newcommand{\y}{{\bf y}}
  \newcommand{\s}{  \mathscr  s}
\newcommand{\remove}[1]{}
 \newcommand{\sinc}{\textrm{sinc}}  
\renewcommand{\sp}{\textrm{Sp}}
\title{Maximal rigidity of random measures and uniqueness pairs: stealthy processes, quasicrystals and periodicity}
\author{Rapha\"el Lachi\`eze-Rey \thanks{Mathnet, Inria Paris, 49 Rue Barrault, 75013 Paris, France\\ \indent raphael.lachieze-rey@math.cnrs.fr}}
\date{}
\renewcommand{\C}{   \mathsf C}
\renewcommand{\S}{   \mathsf S}
\begin{document}
\maketitle

\label{here}

%

   \renewcommand{\c}{  \mathscr  c} 
 
   \label{ch:necessity}

 {\bf Abstract:} 
This article investigates the phenomenon of {\it maximal rigidity} in random spatial systems, where perfect interpolation of the process is possible from partial information, specifically, from its restriction to a strict subdomain, often resulting in a trivial tail $\sigma$-algebra. A classical example known since the 1930's is that a time series is  {\it deterministic}, i.e. fully determined by its values on the negative integers if its spectrum has a gap, or at least a sufficiently deep zero. We extend such results to higher dimensions and continuous settings by establishing a connection with the concept of {\it \APs}, rooted in the uncertainty principle of harmonic analysis. We present several other manifestations of this principle, unify and strengthen seemingly unrelated results across different models: stealthy processes are shown to be maximally rigid on cones,  {and for quasicrystals this cone can be arbitrarily small}; discrete integer-valued processes are necessarily periodic when they have a simply connected spectrum. Finally, we identify a surprising class of continuous fields with seemingly standard behavior, such as linear variance and finite dependency range, that undergo a phase transition: they are perfectly interpolable on $B(0,\rho)$ for $0<\rho \leqslant \frac{ 2}{\pi }$ but exhibit no rigidity for $\rho >2$.\\

 {\bf Keywords:} Rigidity, perfect interpolation, point processes, stealthy processes, quasicrystals, Gaussian fields

\section{Introduction}
\label{sec:intro}

The phenomenon of number rigidity for disordered point processes, or more generally that of $ k$-rigidity, has been intensively studied in the last years, see  \cite{GP17,GL-sufficient, Lr24} and references therein.
 Maximal rigidity, or its counterpart  {\it perfect interpolability}, is a more extreme phenomenon that concerns a specific class of models. Its first instance was proved independently by Kolmogorov  \cite{Kolmo41} and Wiener  \cite{Wiener}: a stationary sequence $\X =  \{\X_{ k};k\in \mathbb{Z} \}\subset \mathbb{R}$, or  {\it time series}, is entirely determined by its past, i.e. $\X\in  \sigma (\X_{ k},k<0)$,  if and only  if  the logarithmic integral of its spectral density diverges, which happens if it  has a gap or a sufficiently deep zero (see  \eqref{eq:dez-torus}). We  {  more generally say that a random measure or a random process $ \X$ on some subgroup of $ \mathbb{R}^{ d}$ is  {\it perfectly interpolable} from $ A$, or  {\it maximally rigid} on $ A^{ c}$, if the restriction of $ X$ to $ A$ allows to a.s. completely reconstruct all the process $ X$ (in the previous examples, we have perfect interpolability from the set of negative integers $A = \mathbb{Z} _{ -}$). We investigate here how some properties of $ \X$'s spectrum imply maximal rigidity behaviours.}

 We first give   a multi-dimensional version of this  result, replacing $ \mathbb{Z} _{ -}$ by $ (\mathbb{Z} _{  +  }^{ d})^{ c},$ also valid in the continuous space with $ (\mathbb{R}_{  + }^{ d})^{ c}$. It applies in particular to  {\it stealthy processes}, characterized as having a spectral gap.  In condensed matter physics, it reflects the transparency to wavelengths corresponding to the spectral gap, this topic attracted considerable interest, see the non-exhaustive bibliographic sample   \cite{TZS,ZST,ZST-I,Tor16b,Tor18,Morse,casiulis,Klatt-nature} . It has relations with the concept of  {\it blue noise} in image analysis and optimal transport  \cite{BlueNoise,deGoes}, quantization in numerical probability  \cite{Sarrazin}, or numerical integration, since stealthy samples yield a fast decay for linear statistics  \cite{VarianceMC,survey-hu}. Their study from the mathematical point of view is difficult and few rigourous results exist, see   \cite{GL18}, or  \cite{AdiGhoshLeb} in the discrete setting;
 there are  deep unresolved mathematical questions around them, such as the mere existence of stationary disordered models, see  \cite[Section 5.2]{survey-hu}.
 Ghosh and Lebowitz \cite{GL18} showed in particular that such processes are maximally rigid on bounded sets, i.e. the points on a given bounded set can be recovered from outside points. We extend this result by showing  that they are maximally rigid on strictly convex cones. 
 
 Another possible way to generalize  Kolmogorov-Wiener's theorem is when one assumes that a random measure has a spectral gap (or a sufficiently deep zero) along each coordinate. This assumption is much stronger, and it yields a stronger rigidity behaviour: we show that in this case we have perfect interpolability  \underline{from} a convex cone (e.g. $ \mathbb{Z} _{ -}^{ d}$). 
 It allows  in the discrete setting to generalise to higher dimensions a surprising result of Borichev et al. \cite{BNS} stating that an integer-valued stationary process whose spectral density has an exponentially deep zero, is in fact a.s. periodic, later refined in  \cite{BSW18}. With more general results from complex analysis, we show on $ \mathbb{Z}^{ d}$ that it is actually sufficient to assume that the spectrum is a subset of a simply connected  set of $  \mathbb  T  ^{ d}.$
 
 
 We then turn towards the study of  quasicrystals, characterised by a stronger constraint: the spectrum is supposed to be purely atomic. Quasicrystals are models of homogeneous media that emerged from experimental findings in materials science  \cite{schetchman}, after some predictions from crystallography  \cite{mckay}, see at table  \ref{table:1}-(c) the empirically measured spectrum of the AiMnPd alloy.
It drew considerable interest in both physics and mathematics, see for instance the survey  \cite{meyer-quasicrystals} and the monograph  \cite{baake-book}; they   turned out to be connected to  {an old line of mathematical investigations initiated with the 18th Hilbert problem, with works from Berger, Robinson, Wang, culminating with the aperiodic tilings discovered by Penrose}  \cite{penrose-tilings}, and generalise the  {\it cut-and-project} models introduced by Meyer  \cite{meyer72}. Mathematically, quasicrystals are uniformly discrete and relatively dense point configurations  having a purely atomic spectral measure.   Bj\"orklund  and Hartnick \cite{HartBjo}  studied stationarised    {\it cut-and-project} processes, and proved in particular that almost all such models are  hyperuniform, they also questioned whether they are number rigid, and it was answered in  \cite{Lr24} that they actually are maximally rigid on bounded sets, like stealthy processes. 
 {Hyperuniformity means that the number variance in large balls is negligible  with respect to the volume of the ball. This is a (generally) unexpected cancellation phenomenon, it indicates strong interactions between points, see for instance the surveys  \cite{Tor18,survey-hu}. It is related in the deterministic setting of almost periodic quasicrystals to the concepts of  {\it quick convergence to equilibrium}, or  {\it strict boundary property}, see for instance  \cite{Miekisz}, in particular definition (1).}
We show here that the rigidity properties of quasicrystals extend beyond bounded sets, or convex cones like stealthy processes, in that quasicrystals are rigid on concave cones: they can be entirely determined from their restriction on any cone with non-empty interior, such as $ \mathbb{R}_{ -}^{ d}$. This somehow means that all the information of a quasicrystal configuration is contained in any arbitrarily small sub-portion of it, and echoes a  line of results involved in determining what is the quantity of disorder in a quasicrystal with atomic spectrum \cite{Lagarias,KurSarnak,LevOlev}. All the findings described above imply for instance that the corresponding models have a trivial tail $ \sigma $-algebra.
 
 We finally exhibit a   surprising phenomenon for a  class of random fields which are rather standard (not stealthy, not even hyperuniform, with integrable or even finite-range covariance) and exhibit a phase transition: there is (maximal) rigidity on $ B(0,\rho )$  if and only  if  $ \rho \leqslant \rho _{c}$ for some $ \rho _{c}\in [2/\pi ,2]$, which means the field can be perfectly reconstructed on $ B(0,\rho )$. This phenomenon relies on Jensen's identity, yielding that for a non-zero entire function of type $\rho $  the density of the zero set     is bounded by $\pi \rho  /2$.\\
 
  {  The rest of Section  \ref{sec:intro} is devoted to giving the proper mathematical framework to study the phenomenon of maximal rigidity rigourously  in the general setting of weakly stationary random measures. For a first reading, one can only consider for $ \M$ a stationary point process. Once the concept of perfect interpolation is clear, the interested reader can skip directly to the applications of Sections  \ref{sec:stealthy} (Stealthy processes), \ref{sec:quasi} (Quasicrystals), \ref{sec:periodicity} (Periodic fields), or  \ref{sec:finite-range}  (Gaussian continuous fields).}
 As detailed at the end of the section, those results are related to establishing  {\it uniqueness pairs} in harmonic analysis, that is sets in the direct and Fourier space that cannot support respectively a non-null function and its Fourier transform.

 \subsection{Random fields and measures}  
Following Kolmogorov's isomorphism theorem in the 1930's,  
several authors in the 1950's and 1960's, such as Doob, Yaglom, or Gelfand, derived a spectral theory for generalised processes, with works from Bartlett in the 1960's targeted towards point processes. It relies on the fact that continuous positive definite bilinear forms can be represented by a non-negative measure, building on the work of Bochner in the 1930's, generalised by Schwartz to tempered distributions. See for instance Daley and Vere-Jones'book  \cite[Section 8]{DVj08} for a brief history about random measure.

We consider more generally a random    {\it weakly stationary}  complex-valued measure $ \M$, i.e. a collection of complex square integrable random variables $ \Mint{\M}(f)$ on a probability space $ (\Omega ,\mathbf{P}),$ for $ f$ in the space $ \mathcal{C}_{c}^{b}(\mathbb{R}^{d})$ of measurable bounded and compactly supported functions  satisfying   \begin{itemize}
\item[(i)] $\Mint{\M}$ a.s. induces a complex-valued measure on each compact of $ \mathbb{R}^{ d},$
\item [(ii)] $\textrm{Var}\left(\Mint{ \M}(\tau _{x}f) \right) =  \textrm{Var}\left(\Mint{\M}(f)\right)$ where $ \tau_{x}$ is the operator of translation by $ x\in \mathbb{R}^{d}$.
\end{itemize}

 This class encompasses locally square integrable (weakly) stationary marked point processes, spin measures,  continuous random fields or their nodal domains.
The class of admissible square integrable linear statistics $ \Mint{\M}(f)$ generally extends to a broader class of functions $ f$ with unbounded support, such as Schwartz functions.
By stationarity, a disintegration  yields that for each $ f,g\in \mathscr C_{c}^{ b}(\mathbb{R}^{ d})$,
\begin{align}
\label{eq:cov-measure}
 \textrm{Cov}\left(\Mint{\M}(f),\Mint{\M}(g)\right) =  \int_{ 
 }f(y)  \overline{  g(x + y)}\C(dx)dy,
\end{align}
where $ \C$ defines a signed measure on each compact subspace of $ \mathbb{R}^{ d}$ (but it might not be possible to give a sense to $ \C(\mathbb{R}^{ d})).$ We refer the reader to  \cite{DVj08} for non-negative random measures, or  \cite{AT07} for Gaussian processes, which is sufficient for the examples studied in this paper.
 It is symmetric, i.e. $ \C(A) = \C(-A)$ for $ A$ bounded, and called the  {\it (reduced) covariance measure}. We can see by taking for instance $ f,g$ that approximate Dirac masses in respectively $ 0$ and $ x$ that $ \C(dx)$ measures the covariance of the masses in $ 0$ and $ x$ (see also below an interpretation with discrete fields). 
 
Taking $ f = g$  in  \eqref{eq:cov-measure} yields that $ \C$ is semi-definite positive. The generalisation of Bochner's theorem  \cite[Th. 4.5]{Berg}  \footnote{not requiring positivity of $ \C$}   yields that there exists  a non-negative symmetric measure $ \S$ which satisfies a  {Parseval-type formula}, also called  {\it phase-space} formula
\begin{align}
\label{phase-continuous}
 \textrm{Var}\left(\Mint{\M}(f)\right) = (2\pi )^{-d}\int_{  \mathbb{R}^{ d} } | \hat f | ^{2}d\S,f\in \C_{ c}^{ b}(\mathbb{R}^{ d}),
\end{align} 
where $$ \hat f(u) = \int_{\mathbb{R}^{ d}}e^{-i   u\cdot x } f(x)dx,u\in \mathbb{R}^{ d} .$$ The measure $ \S$ is in fact the Fourier transform of $ \C$ in the sense of tempered distributions by the original Schwartz's result (\cite[Th. XVIII]{schwartz}), and  \eqref{phase-continuous} holds for all Schwartz functions also.

$ \S$ is   called the    {\it spectral measure} of $ \M$. It is translation bounded  (see for instance the proof at  \cite[Prop. 8.2.II-(iv)]{DVj08}), which implies the useful property
\begin{align}
\label{eq:integr-SF}
\int_{\mathbb{R}^{ d}} (1 + \|u\|)^{ -d-1}\S(du)<\infty .
\end{align}

%
%

%

On $ \mathbb{Z} ^{ d}$, a random field  is a collection of square integrable centred random variables $ \X = \{\X_{\m},\m\in \mathbb{Z} ^{d}\}$. Similarly, we say that it is   weakly stationary if for any function $ f:\mathbb{Z} ^{d}\to  \mathbb R  $ with finite support, the variance of the quantity 
\begin{align*}
\Mint{\X}(f): = \sum_{\m\in \mathbb{Z} ^{d}}\X_{\m}f(\m)
\end{align*}
  is unchanged when $ f$ is shifted by a quantity $ \m\in \mathbb{Z} ^{d}$, i.e. $  \textrm{Var}\left(\Mint{\X}(f)\right) =  \textrm{Var}\left(\Mint{\X}(\tau _{\m}f)\right)$. \\

Since some statements  are very similar in both frameworks, we introduce unified notation to treat both in parallel. Abstractly denote by $ \e^{ d}$ a space which can be either $  \mathbb R  ^{d}$ endowed with Lebesgue measure or $  \mathbb  Z  ^{d}$ endowed with the counting measure.  If $ \e ^{ d}= \mathbb{Z}^{ d} $, we  define for a weakly stationary random field $ \M$, $ \C(\m) =  \textrm{Cov}\left(\M_0,\M_\m \right),\m\in   \mathbb Z  ^{d}.$ Also denote, for $ B\subset \mathbb{Z} ^{ d}$, by $  \mathscr  C_{ c}^{ b}(B)$ the class of $ f:B\to  \mathbb C $ with bounded support.
 In this framework,  \eqref{eq:cov-measure} still holds with an integration over $ \e$.

Denote the dual group by $ \hat \e ^d= \mathbb{R}^{d}$ if $ \e ^d= \mathbb{R}^{d}$, and $ \hat  \e ^d=  \mathbb  T  ^{d}$ with $  \mathbb  T   =   { \{e^{ is},s\in [-\pi ,\pi )\}}$ if $ \e^{ d}  = \mathbb{Z}  ^{ d}.$ In the latter case, the Fourier transform is defined for $ f\in  \mathscr  C_{ c}^{ b}(\mathbb{Z} ^{ d})$  
\begin{align*}
 \hat f(u) :=\sum_{\k\in \mathbb{Z} ^{ d}}u^{ \k}f(\k),u \in  \mathbb  T  ^{ d},
\end{align*}
with the notation $ (u_{ 1},\dots ,u_{ d})^{ (k_{ 1},\dots ,k_{ d})} = u_{ 1}^{ k_{ 1}}\dots u_{ d}^{ k_{ d}}.$

Then  \eqref{phase-continuous} generalises to
\begin{align}
\label{eq:phase}
 \textrm{Var}\left(\M(f)\right)=(2\pi )^{-d}\int_{ \hat \e^{d}}| \hat f(  {  u})|^{2}\S(  { du})
\end{align}
for a bounded function  $ f$   that has a compact support, or, if $ \e ^{ d} =\mathbb{R}^{ d} $, that is a Schwartz function.
All the results of the present paper are actually about $ \S$,  they  imply rigidity behaviours in so much that  \eqref{eq:phase} is satisfied, i.e. we are dealing with  {\it linear rigidity}. If $ \e ^{ d}= \mathbb{Z} ^{ d}$, $ \S(  \mathbb  T  ^{ d}) =  \textrm{Var}\left(\M_0\right)<\infty $, which we also generalise with  \eqref{eq:integr-SF} to 
\begin{align}
\label{eq:integr-SF-general}
 \int_{ \hat \e^{ d}}\kappa _{  \hat \e^{ d}}(u)\S(du)<\infty 
\end{align}
with $ \kappa _{ \mathbb{R}^{ d}}(u) = (1 + \|u\|)^{ -d-1}$ and $ \kappa _{ \mathbb{T} ^{ d}}(u) = 1.$
By Radon-Nikodym's theorem, $ \S$ decomposes into a measure continuous with respect to Lebesgue measure $\s(u)du$, and a singular component $\S_{c}$; $\s$ is called the  {\it structure factor} when $ \M$ is a point process, and the  {\it spectral density} in greater generality. A  {\it spectral gap} is a nonempty open set $ O\subset \hat \e^{d}$ such that $ \S(O) = 0$.

  \subsection{Linear Maximal rigidity (LMR)}    
Given a \WSS random measure  $ \M$ and a subset $ B$  in $ \e^{ d}$, we say that $ \M$ is  {\it maximally rigid}  (MR) on $ B$  if the restriction $ \M1_{B}$ of $ \M$ onto $ B$ is completely determined  from $ B^{ c}$ :  $\sigma ( \M 1_{ B}) \subset   \sigma (\M1_{ B^{ c}})$.

We actually  deal in this paper with  {\it linear maximal rigidity} (LMR)   {on $ B$}, meaning that for any $ f \in \mathscr C_{c}^{ b}(B)$, there are $ h_{n}\in \mathscr C_{c}^{b }(B^{ c}),n\geqslant 1,$ such that  in $ L^{2}(\mathbf P ),$
\begin{align}
\label{eq:LMR-gamma}
\Mint{\M}(h_{n})  \xrightarrow[n\to \infty ]{} \Mint{\M}(f ). 
\end{align}
We shall also use the terminology that $ \M$ is  {\it perfectly interpolable from $ B^{ c}$}, as the focus can be on the minimal (class of) set(s) where it is sufficient to know $ \M$ to completely determine it on all the space.  In most cases studied here, we have rigidity at least on all compact $ B$ with non-empty interior, which implies that the tail $ \sigma $-algebra is trivial
: for any $ R>0$, $ \M$ is perfectly interpolable from $ B(0,R)^{ c}.$
In most cases, we are able to prove that even for some unbounded $ B$, typically a cone, we have rigidity on $ B$.


\subsection{Uniqueness pairs and plan of the paper}  
\label{sec:AP}

In harmonic analysis, an  {\it uncertainty principle} is a statement quantifying that a non-null function and its Fourier transform cannot be simultaneously too concentrated in some sense. 
 We are here interested in the following instance of this principle: Given two sets $ A, \hat A\subset \mathbb{R}^{d}$, or $ A\subset \mathbb{Z} ^{d}, \hat A\subset  \mathbb  T  ^{d},$ say they form a  {\it  uniqueness pair}  if there is no function $ f$ vanishing on  $ A$ with $ \hat f$ vanishing on $ \hat A$, except for $ f \equiv 0$. 
 
 \begin{remark}
The concept of uniqueness pair must be connected to a particular class of functions (Schwartz, $ L^{ 2}(\mathbb{R}^{ d})$, ....), but the classes of sets obtained are often similar (see for instance the proof of Theorem \ref{thm:stealthy-cone}). In this work, we use the concept of uniqueness pair only through Theorem  \ref{thm:general-rigidity} below, and we do not formalise further the class of functions involved. 
\end{remark}

 A well known example is when $ A^{ c}, \hat A^{ c}$ are compact on $ \mathbb{R}^{ d}$. More generally, for $ A\subset  \mathbb{R}^{ d} $  with $ A^{ c}$ compact, the Schwartz Paley-Wiener theorem implies that $ B(0,\varepsilon ),\varepsilon >0,$  forms a \AP~ with $ A$ for the class of tempered distributions.
Establishing \APs~ is essential for getting MR results due to the following characterisation of LMR, proved at the end of the section.

    \begin{theorem} 
   \label{thm:general-rigidity}
 
A \WSS random measure $ \M$ on $ \e^{ d}$ with spectral measure $ \S$ on $ \hat \e^{ d}$ is perfectly interpolable from $ A\subset \e^{d}$   if and only if  for every $ \varphi \in L^{2}(\S)  \setminus \{0\}$, $   {\rm Sp}(\varphi \S)\cap  A\neq \emptyset   $  
, where the Fourier transform and spectrum  {  \text{\rm{Sp}}} are understood in the sense of tempered distributions.        
  \end{theorem}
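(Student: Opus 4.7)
The plan is to reformulate perfect interpolability as a density condition inside $L^{2}(\S)$ via the phase--space isometry \eqref{eq:phase}, and then to interpret the dual orthogonality condition as a statement about the Fourier support of $\varphi \S$. The first step is to set up the isometry: by \eqref{eq:phase}, the map $\M(f) \mapsto (2\pi)^{-d/2}\hat f$ extends by linearity and continuity to an isometric isomorphism between $\mathcal{H}_{\M}$, the closed linear span in $L^{2}(\mathbf{P})$ of $\{\M(f): f \in \mathscr{C}_{c}^{b}(\e^{d})\}$, and $\mathcal{V}$, the closed linear span in $L^{2}(\S)$ of $\{\hat f: f \in \mathscr{C}_{c}^{b}(\e^{d})\}$. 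A density argument using \eqref{eq:integr-SF-general} yields $\mathcal{V} = L^{2}(\S)$: in the discrete case trigonometric polynomials are dense in $L^{2}(\S)$ by Stone--Weierstrass; in the continuous case, one first extends $\M(f)$ to Schwartz $f$ (for which \eqref{eq:phase} still holds), so that $\hat f$ also ranges over the Schwartz class, which is dense in $L^{2}(\S)$ thanks to \eqref{eq:integr-SF-general}. Perfect interpolability from $A$ then translates into
\begin{align*}
\mathcal{V}_{A} := \overline{\{\hat h : h \in \mathscr{C}_{c}^{b}(A)\}}^{L^{2}(\S)} = L^{2}(\S),
\end{align*}
i.e.\ to $\mathcal{V}_{A}^{\perp} = \{0\}$ inside $L^{2}(\S)$.

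Next, I would translate this orthogonality into a statement about $\F(\varphi \S)$. By \eqref{eq:integr-SF-general} and Cauchy--Schwarz, $\varphi \S$ is a tempered distribution for every $\varphi \in L^{2}(\S)$, so its Fourier transform $\F(\varphi \S)$ is a tempered distribution on $\e^{d}$. The defining identity of the Fourier transform on tempered distributions then gives, modulo a reflection/conjugation of $h$ which is harmless since $\S$ is symmetric and $L^{2}(\S)$ stable under complex conjugation,
\begin{align*}
\int_{\hat\e^{d}} \varphi(u)\,\overline{\hat h(u)}\,\S(du) = \langle \F(\varphi\S), \tilde h \rangle,
\end{align*}
where $\tilde h \in \mathscr{C}_{c}^{b}(A)$ whenever $h \in \mathscr{C}_{c}^{b}(A)$. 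Consequently, $\varphi \in \mathcal{V}_{A}^{\perp}$ if and only if the tempered distribution $\F(\varphi \S)$ annihilates all test functions of $\mathscr{C}_{c}^{b}(A)$.

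The remaining step, which I expect to be the main obstacle, is to identify this annihilation condition with the geometric statement $\mathrm{Sp}(\varphi \S) \cap A = \emptyset$. The forward direction proceeds by mollification: if $\mathrm{Sp}(\varphi\S) \cap A = \emptyset$, then any $h \in \mathscr{C}_{c}^{b}(A)$ can be approximated in $L^{2}(\S)$ (through its transform $\hat h$) by smooth test functions whose supports stay in a thin neighbourhood of $\mathrm{supp}(h) \subset A$ and thus remain disjoint from $\mathrm{Sp}(\varphi\S)$, so the pairing vanishes in the limit. Conversely, if there exists $x \in \mathrm{Sp}(\varphi\S) \cap A$, then by definition of the distributional support one can pick a smooth bump in $\mathscr{C}_{c}^{b}(A)$ concentrated near $x$ against which $\F(\varphi\S)$ pairs nontrivially. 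The delicate point is that $A$ need not be open and $\mathscr{C}_{c}^{b}(A)$ consists of merely bounded measurable functions, so one has to handle the mollification and the $L^{2}(\S)$-approximation with care (also using \eqref{eq:integr-SF-general} to keep things within the tempered framework). Putting the three steps together yields the equivalence stated in the theorem.
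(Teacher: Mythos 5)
Your proposal is correct and follows essentially the same route as the paper: the phase-space isometry \eqref{eq:phase} converts perfect interpolability from $A$ into density of $\{\hat h: h\in\mathscr C_c^b(A)\}$ in $L^2(\S)$, and the dual condition $\mathcal V_A^{\perp}=\{0\}$ is then read as the support statement for $\F(\varphi\S)$. You are somewhat more explicit than the paper about the density of $\{\hat f : f\in\mathscr C_c^b(\e^d)\}$ in $L^2(\S)$ and about the conjugation/reflection and non-open-$A$ technicalities, which the paper's proof leaves implicit, but the argument is the same.
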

Therefore we have a recipe for establishing maximal rigidity: for $ \hat A$ forming a \AP~ with $ A,$ if $  \S$ (and therefore $ \varphi \S$) vanishes on $ \hat A$, either $ \varphi \S \equiv 0$ or $ \sp(\varphi \S) \cap A\neq \emptyset $, which means  perfect interpolability from $ A$ by the theorem above.

Let us give an example:  Runge-Weierstrass's theorem yields that for $ \hat A\subset  \mathbb  T  $ open non-empty, a continuous function can be uniformly approximated by polynomials on $  \mathbb  T   \setminus  \hat A$.  Kolmogorov's isomorphism theorem then yields that a time series $ \X$ whose spectrum vanishes on $ \hat A$ is perfectly interpolable from the discrete half line $ A = \mathbb{Z} _{ -} $ of negative integers (or any other half-line).
Szeg\"o  \cite{Sz20} extended the approximation  result  for absolutely continuous measures by proving that the gap assumption can be relaxed to the divergence of the logarithmic integral of the density
\begin{align}
\label{eq:dez-torus}
  \int_{  \mathbb  T  }\ln(u)\s(u)du = -\infty .
\end{align}
Verblunsky  \cite{Verblunsky} generalised it to an arbitrary finite measure $ \S$ which continuous component satisfies   \eqref{eq:dez-torus}.
Through the linear isomorphism, Kolmogorov  \cite{Kolmo41} and Wiener   \cite{Wiener} recovered this result, yielding the following theorem. See a bibliography in  \cite[Appendix, XII]{Doob}.  {  Recall that any non-negative measure $ \S$ can be decomposed in a  components having a density $ \s$  with respect to Lebesgue measure and a component singular with $ \Leb$.}

\begin{theorem}[Kolmogorov-Wiener]
\label{thm:Szeg\"o-classic}
Let $ \X_{\m},\m\in \mathbb{Z} $ a \WSS random field. Then $ \X_{\m}$ is LMR on $ \mathbb{Z} _{-}$  if and only if   the continuous component $ \s\Leb$ of its spectral measure satisfies  \eqref{eq:dez-torus}. 
 \end{theorem}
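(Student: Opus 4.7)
The plan is to specialize Theorem~\ref{thm:general-rigidity} to $\e^d=\mathbb{Z}$, $\hat\e^d=\mathbb{T}$ with $A:=\mathbb{Z}\setminus\mathbb{Z}_{-}$, and then recognise the resulting statement as the classical Szeg\"o--Verblunsky density theorem, which is recalled in the excerpt just before \eqref{eq:dez-torus}.

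By Theorem~\ref{thm:general-rigidity}, $\X$ is LMR on $\mathbb{Z}_{-}$ if and only if there is no nonzero $\varphi\in L^2(\S)$ with $\sp(\varphi\S)\subset\mathbb{Z}_{-}$. Since $\S$ is a finite measure on $\mathbb{T}$, Cauchy--Schwarz gives $\varphi\in L^1(\S)$, so $\varphi\S$ is a finite complex measure on $\mathbb{T}$; its Fourier transform in the tempered sense is simply the Fourier-coefficient sequence
\begin{align*}
k\;\longmapsto\;\int_{\mathbb{T}} u^{-k}\varphi(u)\,\S(du),\qquad k\in\mathbb{Z},
\end{align*}
and $\sp(\varphi\S)$ is the support of that sequence. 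Hence $\sp(\varphi\S)\subset\mathbb{Z}_{-}$ is equivalent to $\varphi$ being $L^2(\S)$-orthogonal to $\{u^{-k}:k\in A\}$, and the absence of a nonzero such $\varphi$ is equivalent to the totality of this family in $L^2(\S)$. Using the symmetry $\S(-\cdot)=\S(\cdot)$ noted after \eqref{eq:cov-measure}, this totality is in turn equivalent to the density of the analytic polynomials in $L^2(\S)$.

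The final step is the Szeg\"o--Verblunsky theorem as recalled in the excerpt: the analytic polynomials are dense in $L^2(\S)$ if and only if the continuous component $\s\Leb$ of $\S$ satisfies the log-integral divergence \eqref{eq:dez-torus}. Chaining the two equivalences yields the theorem. The only genuinely nontrivial analytic input is Szeg\"o--Verblunsky itself, which we quote; the rest of the argument is a direct dictionary translation through Theorem~\ref{thm:general-rigidity} between spectral-orthogonality and Fourier-support conditions, with the only care needed being the correct identification of $\sp(\varphi\S)\subset\mathbb{Z}$ with the support of the Fourier coefficients of the finite measure $\varphi\S$.
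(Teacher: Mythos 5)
Your proposal is correct and follows essentially the route the paper itself indicates: the paper gives no self-contained proof but attributes the statement to the classical literature ``through the linear isomorphism,'' which is exactly the reduction you make explicit via Theorem~\ref{thm:general-rigidity}, translating LMR on $\mathbb{Z}_-$ into density of the analytic polynomials in $L^2(\S)$ and then quoting Szeg\"o--Verblunsky. Your handling of the symmetry of $\S$ and the identification of $\sp(\varphi\S)$ with the support of the Fourier coefficients of the finite measure $\varphi\S$ is the right bookkeeping.
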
  
   
This condition is satisfied if for instance  $ \s(u) \sim e^{	-1/ { | u | }}$ as $ u\to 0$, but  $\s(u) \sim e^{-1/ \sqrt{| u |} }$ is not enough.  If one only assumes that all  the derivatives vanish, one  ``only'' has rigidity on a bounded set   (see Corollary \ref{cor:determining} for a more general principle). Wiener  \cite{Wiener} also gives a version of the result for   continuous random processes on $ \mathbb{R}$. Krein  \cite{Krein} shows that it extends to more general processes, see Section  \ref{sec:SW} ( Table \ref{table:1}-(a)).\\

 Let us now put  the results in perspective with this concept and give a plan of the paper. 
 Results and related uniqueness pairs are graphically illustrated  in Table   \ref{table:1}. \begin{itemize}
\item We consider the generalisation to random measures on $ \mathbb{R}^{ d}$ or $ \mathbb{Z} ^{ d}$ having a spectral gap, called  {\it stealthy random measures}. A result of Shapiro \cite{Shapiro},  that can be extended to tempered distributions,  says that   for a strictly convex cone $ B\subset \mathbb{R}^{d}$, $ A: = B^{ c}$ forms a \AP ~with any ball $ \hat A  = B(u,\varepsilon ),\varepsilon >0$. This implies that stealthy processes are maximally rigid on strictly convex cones, extending previous results on a bounded $ B$ (Theorem \ref{thm:stealthy-cone}), see Table  \ref{table:1}-b. In the discrete setting, Helson and Lowdenslager \cite{Helson} show that convex cones can actually be enlarged to be ``almost'' half-spaces. 

\item One can formulate a different multi-dimensional version of Szeg\"o's theorem adapted to quasicrystals, or more generally to any random measure whose spectrum is purely atomic.  While the previous ``stealthy'' result showcases processes that are perfectly interpolable on a convex cone, in Section  \ref{sec:stealthy} we show that quasicrystals are perfectly interpolable  \underline {from} an arbitrarily small cone with nonempty interior, which is clearly stronger.    This ``quasicrystal'' result is directly  proved with some complex analysis exploiting the atomic hypothesis.  See Table  \ref{table:1}-c.

\item We then investigate an intriguing phenomenon for discrete fields with values in $ \mathbb{Z} $. It was proved by  \cite{BSW18,BNS}  that if a stationary field $ \mathbb{Z} \to \mathbb{Z} $ has a spectral gap, it is a.s. periodic, and the spectrum is actually discrete. The generalisation to $ \mathbb{Z} ^{ d}$ is more subtle. If the spectrum  is only assumed to contain a gap, or even a half space, it is not enough to guarantee periodicity. If on the other hand the spectrum is a simply connected subset of $  \mathbb  T  ^{ d}$, then periodicity is enforced (Theorem \ref{thm:BSW-d}).  
A typical example is when the spectrum has a cut along each coordinate, as in Table  \ref{table:1}-d.
The proof relies on the fact that such processes exhibit a strong form of perfect  interpolability  \underline {from} the orthant $ \mathbb{Z} _{ -}^{ d}$, like quasicrystals.   This is proved by considerations of approximability by complex polynomials of several variables. A result of Stolzenberg \cite{Stolzenberg}   on polynomially convex sets allows to show that simple connectedness of the spectrum is enough to ensure periodicity. We also give an example of a spectrum which is not a subset of a simply connected set and yet enforces periodicity, see Table  \ref{table:1}-e.
 
\item Finally, in Section \ref{sec:finite-range}, 
we showcase a  completely different and striking form of rigidity experiencing a phase transition:
 if you take  a random field on $ \mathbb{R}^{d}$ whose covariance is, say, the truncated cone function $\C(x) = (1-\|x\| )\mathbf{1}_{\{ \|x\|\leqslant 1\}}$, then it is MR on the ball $ B(0,\rho )$ if $ \rho <2/\pi $, and it experiences no form of   rigidity for $ \rho >2$ (Theorem  \ref{thm:short-range}). The rigidity result relies on the fact that  $ A = B(0,2)$   forms a \AP~ with  the union of concentric  spheres $ \hat A = \cup _{ k\in \mathbb{N}}\partial B(0,k\pi /2)$ (see Table \ref{table:1}-(f)), or said differently if a tempered measure has its spectrum in $ B(0,2 )$, it cannot vanish on all circles of radii $  \frac{\pi }{2} k$. These results are a classical consequence of the Schwartz-Paley-Wiener theorem and Jensen's identity in complex analysis. A similar result can be stated for a well chosen $ \mathcal{C}^{ k}$ field for any $ k\in \mathbb{N}.$
\end{itemize}
The question of determining \APs~$ A, \hat A$ has generated a lot of activity in the 70's and 80's, mostly in dimension $ 1$, see the monograph  \cite{Havin} where $ (A^{ c}, \hat A^{ c})$ is called a  {\it mutually annihilating pair}. There is also the celebrated result of Benedicks that states that if $ A, \hat A\subset \mathbb{R}^{ d}$ have  finite Lebesgue measure, they form a \AP  ~\cite{Benedicks}, generalised in higher dimensions in  \cite{Jamming}.  
 In terms of random measures, a version adapted to distributions could mean with Theorem \ref{thm:general-rigidity} that if a random measure's spectrum has finite volume, then it is MR on every set with finite volume, most likely under some additional assumptions. See also  \cite{Helson} for a result on discrete half-spaces.
 More recently, and for other purposes,  the celebrated results of  Radchenko and Viazovska exploit new findings on uniqueness pairs, namely they show
 that $ A =  \hat A = \{0, \pm \sqrt{1}, \pm \sqrt{2}, \pm\sqrt{3}, ...\}$ form a uniqueness pair  \cite{Viazovska}, followed by   other works, see for instance  \cite{Sodin-uniqueness} and references therein.  
 
\subsection{Proof of Theorem \ref{thm:general-rigidity} and monotonicity of rigidity}
 
 The proof relies on the fact that LMR on $ A$  is equivalent to having \eqref{eq:LMR-gamma} for all $ f \in \mathscr C_{c}^{ b}(A )$. By  \eqref{eq:phase}, this is equivalent to having $ \hat f $  in the $ L^{ 2}(\S)$-closure of $H_{ A}$ where \begin{align*}H_{A} = \{ \hat h:h\in \mathscr C_{c}^{b}(A^{c})\},
\end{align*} 
for all $ f\in \mathscr C_{c}^{ b}(A) $, and therefore this is equivalent to having  $ H_{ A}^{ \perp} = \{0\}$ in $ L^{ 2}(\S)$. We therefore proved that LMR is equivalent to: for all $ \varphi \in L^{ 2}(\S)$ such that 
\begin{align}
\label{eq:equiv-orthog}
 \int_{  \e  ^{ d}} \hat h \varphi \S = \F(\varphi \S)( h) = 0,\text{\rm{ for all }} h\in \mathscr C_{c}^{ b}(A^{ c})
\end{align}
we have $ \varphi \S \equiv 0$.
Clearly  \eqref{eq:equiv-orthog} is equivalent to $ \F(\varphi \S)$ being supported by $ A$, which concludes the proof.\\

Since this is only a property of $ \S$, we shall equivalently  define the assumption of Theorem \ref{thm:general-rigidity} as the LMR induced by the non-negative measure $ \S$ on $ \e$.
We derive the following corollary:
\begin{corollary}
\label{cor:monotone}
Let $\S,\S'$ non-negative symmetric measures on $ \hat \e^{ d}$ such that $\S'$ has a bounded density $f$  with respect to $\S$, and let $ A\subset \e^{d}$. Then if $\S$ induces LMR on $ A$, so does $\S'$. 
\end{corollary}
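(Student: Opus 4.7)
The plan is to reduce both LMR properties to the spectral criterion of Theorem \ref{thm:general-rigidity}, and then transfer the criterion from $\S$ to $\S'$ via the Radon--Nikodym density $f$. Let me write $f = d\S'/d\S$, assumed bounded by some constant $M < \infty$.

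By Theorem \ref{thm:general-rigidity}, LMR of $\S$ on $A$ is equivalent to saying that for every $\varphi \in L^{2}(\S) \setminus \{0\}$, $\mathrm{Sp}(\varphi \S) \cap A \neq \emptyset$, and the same equivalence holds for $\S'$ in place of $\S$. So I would start with an arbitrary $\psi \in L^{2}(\S') \setminus \{0\}$ and aim to show $\mathrm{Sp}(\psi \S') \cap A \neq \emptyset$. The natural move is to set $\varphi := \psi f$ and compare $\varphi \S$ with $\psi \S'$ as measures on $\hat \e^{d}$: since $d\S' = f\, d\S$, one has the identity $\varphi \S = \psi f \S = \psi \S'$ as (tempered) measures, so their Fourier transforms and spectra agree.

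The two routine checks are that $\varphi$ lies in $L^{2}(\S)$ and is not $\S$-a.e.\ zero. For the first,
\begin{align*}
\int |\varphi|^{2}\, d\S = \int |\psi|^{2} f^{2}\, d\S \leqslant M \int |\psi|^{2} f\, d\S = M \int |\psi|^{2}\, d\S' < \infty.
\end{align*}
For the second, $\psi \not\equiv 0$ in $L^{2}(\S')$ means $\psi \S' \not\equiv 0$ as a measure, and since $\varphi \S = \psi \S'$ this forces $\varphi \not\equiv 0$ in $L^{2}(\S)$. Applying the spectral criterion to $\S$ then gives $\mathrm{Sp}(\varphi \S) \cap A \neq \emptyset$, and by the identification above this is exactly $\mathrm{Sp}(\psi \S') \cap A \neq \emptyset$. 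Since $\psi$ was arbitrary, the spectral criterion holds for $\S'$, so $\S'$ induces LMR on $A$.

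There is no real obstacle here beyond verifying that $\varphi \S$ is a bona fide tempered distribution so that one may speak of its spectrum; this follows from the integrability condition \eqref{eq:integr-SF-general} together with Cauchy--Schwarz applied to $\varphi \in L^{2}(\S)$, exactly as in the setup preceding Theorem \ref{thm:general-rigidity}. The boundedness of $f$ is used only to secure the $L^{2}(\S)$ membership of $\varphi$; no other structural property of $f$ is needed.
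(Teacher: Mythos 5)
Your proposal is correct and follows essentially the same route as the paper: set $\varphi=\psi f$, use the boundedness of $f$ together with $f\geqslant 0$ to get the bound $\int|\psi f|^{2}d\S\leqslant M\int|\psi|^{2}d\S'$, identify $\varphi\S=\psi\S'$ as tempered measures, and transfer the spectral criterion of Theorem \ref{thm:general-rigidity}. The only cosmetic difference is that you argue in the direct form (nonzero $\psi$ has spectrum meeting $A$) while the paper argues the contrapositive (spectrum contained in $A$ forces $\psi\equiv 0$).
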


\begin{proof}
Assume $\S$ is LMR and let $\varphi \in L^{2}(\S')$  {with $ \sp(\varphi \S')\subset A$}.
 Since $f\leqslant c<\infty $, $\varphi f\in L^{2}(\S)$:
\begin{align*}
\int_{}(\varphi f)^{2}d\S\leqslant c \int_{}\varphi ^{2}fd\S =c \|\varphi \|_{L^{2}(\S')}^{2}<\infty ,
\end{align*}
and $ \sp(\varphi f\S)\subset A$.
%
Hence $\varphi f \equiv 0$ in $L^{2}(\S)$, hence $\varphi \equiv 0$ in $L^{2}(\S')$.
\end{proof}

  \subsection{Reconstruction procedure}
  \label{sec:reconstruction}
  
  Linear maximal rigidity on $ A$ implies that any linear statistic supported by $ A$ can be approximated by linear statistics supported by $ A^{ c}$. Finding them is a matter of Hilbert analysis. Say for instance $ A = B(0,1)$ and one wishes to know the mass $ \M(B (0,1/2 ))$ based on the information contained in $ B(0,1)^{ c}$. Then one must project $ { 1_{ B(0,1/2)}}$ onto a Hilbert basis $ (h_{ k})_{ k\geqslant 1}$ supported by $ B(0,1)^{ c}$, where the scalar product is $$ \langle h,h' \rangle _{ \S}:=  \textrm{Cov}\left(\M(h),\M(h')\right) = (2\pi )^{ -d}\int_{ } \hat h \overline{\hat h'} d\S.$$ Then the reconstruction is in $ L^{ 2}(\mathbf P )$
\begin{align*}
 \M({ B(0,1/2 )}) = \sum_{k} \langle 1_{ B(0,1/2 )}, h_{ k} \rangle_{ \S} \M(h_{ k}).
\end{align*}
When one wishes to estimates the moments $ \M(x  \mapsto  x^{ \k}1_{ B(0,1)}(x)),\k\in \mathbb{N}^{ d}$, the estimator only depends on the behaviour of $ \S$ around $ 0$  \cite{GP17,GL-sufficient,survey-hu}, so it is sufficient to build the Hilbert basis for a more simple measure $ \S'$ that is equivalent to $ \S$ around $ 0.$

     \section{Multi-dimensional interpolation theorems on cones}
     \label{sec:cones}

   Krein generalised the Kolmogorov-Wiener Theorem \ref{thm:Szeg\"o-classic}   in the continuous space. Let $ \e_{ -} = \e\cap (-\infty ,0),\e_{  + } = \e\cap (0,\infty ).$  \begin{theorem} [Krein]
  \label{thm:SW}
   Let $\M$ a \WSS random measure on $\e$. Then $ \M$ is LMR on $\e  _{  + }$  if and only if  $\M$'s spectral density $ \s$ satisfies
\begin{align}\label{eq:wiener}\int_{  \hat \e  }{    \ln(\s_{}(u))   }\kappa_{ \e} (u)du =- \infty .
\end{align} 
   \end{theorem}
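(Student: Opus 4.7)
The plan is to invoke Theorem \ref{thm:general-rigidity} with $A=\e_+$ (interpolated region) and $A^c=\e_-$ (observation region), which reduces LMR on $\e_+$ to: no nonzero $\varphi\in L^2(\S)$ has $\F(\varphi\S)$ supported in $[0,\infty)\cap\he$. The discrete case $\e=\mathbb{Z}$ is then precisely the Kolmogorov-Wiener Theorem~\ref{thm:Szeg\"o-classic}, so only the continuous case $\e=\R$ needs a new argument; there $\kappa_\R(u)=(1+|u|)^{-2}$.

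The key analytic input is the Paley-Wiener-Schwartz theorem: a tempered distribution on $\R$ with Fourier support in $[0,\infty)$ is precisely the distributional boundary value of a holomorphic function of polynomial growth on the upper half-plane $\mathbb H=\{z\in\mathbb C:\operatorname{Im}z>0\}$. Thus LMR on $\e_+$ fails if and only if there is some nonzero $\varphi\in L^2(\S)$ whose associated distribution $\varphi\S$ is the boundary trace of a holomorphic function $\Phi$ on $\mathbb H$, reframing the whole problem inside Hardy space theory on $\mathbb H$.

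Assuming $\Phi\neq 0$ exists, one divides by $(z+i)^N$ with $N$ large enough (using the tempered bound \eqref{eq:integr-SF}) so that $G:=\Phi/(z+i)^N$ lands in $H^2(\mathbb H)$, and applies Szeg\"o's inequality
\[
\log|G(i)|\leqslant \frac{1}{\pi}\int_\R \frac{\log|G(u)|}{1+u^2}du.
\]
Combining this with the $L^2(\S)$-bound $\int|\varphi|^2\s\,du<\infty$ and the identification $|G(u)|\asymp|\varphi(u)|\s(u)(1+u^2)^{-N/2}$ on the boundary forces $\int_\R\log\s(u)\kappa_\R(u)du>-\infty$, proving necessity by contraposition. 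For sufficiency, assuming the logarithmic integral is finite, one constructs the Krein outer function
\[
F(z)=\exp\left(\frac{1}{2\pi i}\int_\R \frac{1+uz}{u-z}\cdot\frac{\log\s(u)}{1+u^2}du\right),\quad z\in\mathbb H,
\]
nonzero on $\mathbb H$ with $|F(u)|^2=\s(u)$ a.e., and extracts from $\bar F/\s$ (normalized by a power of $(u+i)^{-1}$ to belong to $L^2(\S)$) a valid $\varphi$ certifying the failure of LMR, since $\varphi\S$ then has Fourier transform in $[0,\infty)$ by the outer function construction.

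The main obstacle, compared with the compact torus case, is the unboundedness of $\R$ paired with the potential presence of a singular component $\S_c$ of $\S$ not visible in \eqref{eq:wiener}. The powers of $(z+i)$ serve as the technical bridge between the weight $\kappa_\R$ and the standard Hardy-space weight $(1+u^2)^{-1}$; separately, following a Krein-Verblunsky style argument, one shows that the indicator $\mathbf{1}_{\{u:\s(u)=0\}}$ lies in the $L^2(\S)$-closure of $H_{\e_+}$, so that the singular part plays no obstructive role and the analysis legitimately reduces to the absolutely continuous density $\s$.
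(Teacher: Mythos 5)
The paper does not prove Theorem \ref{thm:SW} at all: it is quoted from the literature (Kolmogorov for $\e=\mathbb{Z}$, Krein for $\e=\mathbb{R}$, Doob XII.5.1 as a reference in English), and the only argument the paper adds is the remark that a general \WSS random \emph{measure} reduces to a \WSS \emph{process} by convolving $\M$ with $1_{[0,\varepsilon]}$ and letting $\varepsilon\to 0$. Your proposal instead reconstructs the classical Hardy-space proof, and in outline it is the right argument: the reduction through Theorem \ref{thm:general-rigidity}, the identification of half-line spectra with boundary values of holomorphic functions on $\mathbb{H}$, the Jensen/Szeg\"o inequality for one direction and the outer function with $|F|^{2}=\s$ for the other are exactly the classical ingredients. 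A pleasant by-product of routing everything through Theorem \ref{thm:general-rigidity} is that the statement becomes a property of $\S$ alone, so the paper's convolution step for passing from processes to measures is not needed in your version.

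Two steps of the sketch need repair, and one fix handles both. First, $G=\Phi/(z+i)^{N}$ need not lie in $H^{2}(\mathbb{H})$: that would require $\varphi\s(1+u^{2})^{-N/2}\in L^{2}(du)$, and the hypotheses $\int|\varphi|^{2}\s\,du<\infty$ and \eqref{eq:integr-SF} do not control $\int|\varphi|^{2}\s^{2}(1+u^{2})^{-N}du$ for any fixed $N$ (the density $\s$ may have arbitrarily tall thin spikes while keeping $\int\s(u)(1+|u|)^{-2}du$ finite). Second, your identification $|G(u)|\asymp|\varphi(u)|\s(u)(1+u^{2})^{-N/2}$ on the boundary silently discards the singular part of $\varphi\S$, and the closure statement you invoke for it is left unproved and is not quite the relevant assertion. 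The clean route is: by Cauchy--Schwarz and \eqref{eq:integr-SF}, $\nu:=(u+i)^{-N}\varphi\S$ is a \emph{finite} complex measure for $N\geqslant 1$, its Fourier transform is still supported in the half-line, so the F.~and M.~Riesz theorem gives $\nu=g\,du$ with $g\in H^{1}(\mathbb{H})$; this forces $\varphi=0$ $\S_{c}$-a.e.\ (disposing of the singular part) and identifies $g=(u+i)^{-N}\varphi\s$ a.e., after which the log-integrability of boundary values of nonzero $H^{1}$ functions, $\int\log^{-}|g(u)|(1+u^{2})^{-1}du<\infty$, replaces your $H^{2}$ Szeg\"o step and the rest of your computation goes through. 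With these corrections (and the analogous care in the converse direction, where $\bar F/\s$ must be cut off on $\{\s=0\}$ and weighted by $(u+i)^{-N}$ exactly as you indicate), the argument is a valid, essentially self-contained proof of the cited theorem.
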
 
   
   The discrete case is treated in  \cite{Kolmo41}, and the continuous case is a consequence of  \cite[Theorem 1]{Krein}.  See \cite[XII-Thm 5.1]{Doob} for a proof in English for weakly stationary processes.  The case of a   general stationary measure $ \M$ can be deduced from the weakly continuous case by convolving $ \M$ with the kernel $ 1_{ [0,\varepsilon ]}(\cdot )$ and letting $ \varepsilon \to 0$ (but this is still a consequence of  \cite{Krein}).\\
    
Even if  it does not formally imply that $ \s$ vanishes at exponential speed, we sometimes say that $ \s$ has an  {\it exponentially deep zero} (EDZ) if  \eqref{eq:wiener} is satisfied.
We give here generalisations  in several dimensions, and present maximal rigidity results involving  convex cones. 

\subsection{Tensorisation}  
 \label{sec:SW}
We just saw that if the spectral density $ \s$ of a 1D random measure $ \M$   has a gap, or more weakly if its logarithmic integral diverges, then $ \M$ is perfectly interpolable from  the (discrete) half line. We shall generalise this result on $ \e^{d}$: 

\begin{proposition}
\label{prop:suff-DEZ-all}  $ \M$ is perfectly interpolable from the orthant $ \e_{ -}^{ d}$ if  
\begin{align}
\label{eq:tensor-bound}
 \S\leqslant \S_{ 1}\times \dots \times \S_{ d}
\end{align} where each $ \S_{ i}$ is a symmetric non-negative    measure on $ \hat \e$  satisfying  \eqref{eq:integr-SF-general} and \eqref{eq:wiener}. 
\end{proposition}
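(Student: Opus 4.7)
The plan combines the monotonicity Corollary~\ref{cor:monotone} with the spectral characterisation of Theorem~\ref{thm:general-rigidity} in order to reduce the statement to Krein's one-dimensional Theorem~\ref{thm:SW}, applied coordinate by coordinate. Since $\S$ has density at most $1$ with respect to the tensor product $\tilde\S := \S_1 \otimes \dots \otimes \S_d$, Corollary~\ref{cor:monotone} lets me replace $\S$ by $\tilde\S$ throughout and work with a genuine product measure. Theorem~\ref{thm:general-rigidity} then rephrases perfect interpolability from $\e_-^d$ as: any $\varphi \in L^2(\tilde\S)$ with $\sp(\varphi\tilde\S) \cap \e_-^d = \emptyset$ must vanish.

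I would argue by induction on $d$, the base case $d=1$ being exactly Krein's theorem applied to $\S_1$. For the inductive step, write $\tilde\S = \S_1 \otimes \S'$ with $\S' := \S_2 \otimes \dots \otimes \S_d$, and test $\F(\varphi\tilde\S)$ against tensor-product test functions $h_1 \otimes h'$, $h_1 \in \mathscr C_c^b(\e_-)$ and $h' \in \mathscr C_c^b(\e_-^{d-1})$, whose supports lie inside $\e_-^d$. Introducing the partial integral
\begin{align*}
\psi_{h'}(u_1) := \int_{\hat\e^{d-1}} \hat{h'}(u')\, \varphi(u_1, u')\, \S'(du'),
\end{align*}
a Cauchy--Schwarz estimate, together with the rapid decay of $\hat{h'}$ and the integrability \eqref{eq:integr-SF-general} for each $\S_i$, places $\psi_{h'}$ in $L^2(\S_1)$. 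The hypothesis on $\varphi$ then rewrites as $\int \hat h_1\, \psi_{h'}\, d\S_1 = 0$ for every $h_1 \in \mathscr C_c^b(\e_-)$, i.e.\ $\sp(\psi_{h'}\S_1) \cap \e_- = \emptyset$, and Krein's theorem for $\S_1$ forces $\psi_{h'} \equiv 0$ in $L^2(\S_1)$. Running $h'$ over a countable dense family of $\mathscr C_c^b(\e_-^{d-1})$ and discarding a single $\S_1$-null set then gives, for $\S_1$-almost every $u_1$, that $\varphi(u_1,\cdot) \in L^2(\S')$ satisfies $\sp(\varphi(u_1,\cdot)\S') \cap \e_-^{d-1} = \emptyset$. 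The inductive hypothesis kills the slice, and a final application of Fubini yields $\varphi \equiv 0$ in $L^2(\tilde\S)$.

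The main delicate point is the density step linking the two scales of the induction: one has to argue that tensor products $\hat h_1 \otimes \hat{h'}$ built from $h_1, h'$ supported in the open negative orthants are rich enough to certify the slicewise spectral vanishing, which is a Schwartz-kernel--type density statement for compactly supported test functions in the open orthant. This is essentially trivial in the $\mathbb{Z}^d$ setting (separation of variables) and handled by standard cutoff/mollification in the $\mathbb{R}^d$ setting. The remaining technicalities ($\hat{h'} \in L^2(\S')$, Fubini for the slices, extraction of a single $\S_1$-null set from the countable family) all follow directly from the tensor structure of $\tilde\S$ and the rapid decay of Fourier transforms of $\mathscr C_c^b$ functions.
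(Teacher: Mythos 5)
Your proof is correct, but it takes a genuinely different route from the paper's. The paper argues by contradiction directly with $\S$: assuming $\F(\varphi\S)\neq 0$ is supported in $(\e_-^d)^c$, it picks a tensor test function $\Gamma=\gamma_1\otimes\dots\otimes\gamma_d$ supported in $[a,\infty)^d$ for some $a<0$ that detects it, replaces each $\hat\gamma_i$ by an $L^2(\S_i)$-approximant $\hat h_i$ with $h_i$ supported in $(-\infty,a]$ (Krein, translated), and then uses the domination \eqref{eq:tensor-bound} to propagate the one-dimensional errors multiplicatively and show $\hat h_1\otimes\dots\otimes\hat h_d$ stays close to $\hat\Gamma$ in $L^2(\S)$, contradicting $\int \hat H\varphi\, d\S=0$. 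You instead first invoke Corollary~\ref{cor:monotone} to replace $\S$ by the genuine product $\S_1\otimes\dots\otimes\S_d$ --- a reduction the paper does not make, but which is legitimate since \eqref{eq:tensor-bound} gives a Radon--Nikodym density at most $1$ --- and then run a Fubini/slicing induction on $d$. The product structure is essential for your slicing, so this reduction is not cosmetic; it is what buys you the cleaner inductive scheme and makes transparent that \eqref{eq:tensor-bound} is exactly the hypothesis Corollary~\ref{cor:monotone} wants, at the price of the a.e.\ bookkeeping (countable dense family of $h'$, a single exceptional $\S_1$-null set) that the paper's $\varepsilon$-argument avoids. Two small points to tighten: (i) $\sp(\psi_{h'}\S_1)\cap\e_-=\emptyset$ only places the spectrum in $\e\cap[0,\infty)$, while Theorem~\ref{thm:SW} gives interpolability from $(\e_+)^c$, which contains $0$; as in the paper, one should translate the half-line by some $a<0$ before concluding $\psi_{h'}\equiv 0$; (ii) in the continuous case one should work with Schwartz test functions supported in the open orthant so that $\hat{h'}\in L^2(\S')$, consistent with the tempered-distribution framework of Theorem~\ref{thm:general-rigidity}. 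Both are routine.
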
  
  This is related to the fact that $ (\e^d_-)^{ c}$ forms a uniqueness pair with any  $ B\subset \hat \e^{ d}$ having a gap in each coordinate.
This proposition  is proved at Section \ref{sec:prf-prop-1}  .  
%
 
\subsubsection{Counter-example}
  
In dimension $ 1$, condition  \eqref{eq:wiener} only involves the spectral density. We show below that a gap in the spectral density is not sufficient in dimension $ d\geqslant 2$.  
 In this example,
 there is no EDZ along one dimension, and we do not have MR from a cone, not even from a half space.
\begin{proposition} 
\label{prop:separable}
Let $ \s_{ 1}:  \hat \e  \to  \mathbb R  _{  + }$ not satisfying  \eqref{eq:wiener}.
If for some non-null measure $ \tilde \s$ on $   \hat \e^{ d-1}$, 
$
\S\geqslant  \s_{ 1}  \times   \tilde \s,$ then $ \S$ is not perfectly interpolable from  the half space $ \e  _{ -}\times  \e^{ d-1} $ (and a fortiori not from  $  \e_{ -}^{ d}$ either).  
\end{proposition}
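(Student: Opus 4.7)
The plan is a two-step reduction: first use Corollary \ref{cor:monotone} to replace $\S$ by the smaller product measure $\s_1 \times \tilde\s$, and then construct an explicit $\varphi$ violating the rigidity criterion of Theorem \ref{thm:general-rigidity} for this product, via the one-dimensional Krein theorem (Theorem \ref{thm:SW}) combined with tensorisation. Concretely, since $\s_1 \times \tilde\s \leq \S$, the Radon--Nikodym derivative of $\s_1 \times \tilde\s$ with respect to $\S$ is bounded by $1$, so the contrapositive of Corollary \ref{cor:monotone} reduces the claim to showing that $\s_1 \times \tilde\s$ itself is not perfectly interpolable from $A := \e_- \times \e^{d-1}$. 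By Theorem \ref{thm:general-rigidity}, this amounts to exhibiting a non-zero $\varphi \in L^2(\s_1 \times \tilde\s)$ whose spectrum $\sp(\varphi\,(\s_1 \times \tilde\s))$ is disjoint from $A$.

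I build such a $\varphi$ by tensorisation. Since $\s_1$ fails \eqref{eq:wiener}, Theorem \ref{thm:SW} says that the 1D measure $\s_1\,du$ is not LMR on $\e_+$, and Theorem \ref{thm:general-rigidity} applied in dimension one then furnishes some $\varphi_1 \in L^2(\s_1\,du) \setminus \{0\}$ with $\sp(\varphi_1 \s_1) \subset \e_+$. A Fubini argument applied to $\s_1 \times \tilde\s \leq \S$ shows that $\tilde\s$ is locally finite, so being non-null it admits a compact $K \subset \hat\e^{d-1}$ with $\tilde\s(K) > 0$. Setting
\begin{align*}
\varphi(u_1, u') := \varphi_1(u_1)\,\mathbf{1}_K(u'),
\end{align*}
one has $\|\varphi\|^2_{L^2(\s_1 \times \tilde\s)} = \|\varphi_1\|^2_{L^2(\s_1\,du)} \cdot \tilde\s(K) \in (0, \infty)$, so $\varphi$ is a non-zero element of $L^2(\s_1 \times \tilde\s)$.

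The conclusion then follows from the tensor-product factorisation of the Fourier transform,
\begin{align*}
\F\bigl(\varphi\,(\s_1 \times \tilde\s)\bigr) = \F(\varphi_1 \s_1) \otimes \F(\mathbf{1}_K \tilde\s),
\end{align*}
whose support is the product of supports, hence $\sp(\varphi\,(\s_1 \times \tilde\s)) \subset \e_+ \times \e^{d-1}$, which is disjoint from $A = \e_- \times \e^{d-1}$. The main delicate point is the rigorous justification of this tensor-product identity and the product-of-supports rule at the level of tempered distributions; this is routine once one notes that $\mathbf{1}_K \tilde\s$ is a compactly supported finite measure (so its Fourier transform is a smooth function) and that $\varphi_1 \s_1\,du$ is a tempered signed measure (by Cauchy--Schwarz, using $\varphi_1 \in L^2(\s_1\,du)$ together with the temperedness of $\s_1$ inherited from $\S$ via \eqref{eq:integr-SF-general}).
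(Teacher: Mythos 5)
Your proof is correct and follows essentially the same route as the paper: tensorise a one-dimensional witness $\varphi _{1}$ of non-interpolability (supplied by Theorems \ref{thm:SW} and \ref{thm:general-rigidity}) with a non-null function of the remaining coordinates, and observe that the Fourier transform factorises so that the spectrum misses the half space. Your explicit reduction to the product measure $\s_{1}\times \tilde \s$ via Corollary \ref{cor:monotone} is in fact slightly more careful than the paper's write-up, which applies the tensor factorisation directly to $\S$ and takes $\varphi _{1}\otimes \varphi _{2}\in L^{2}(\S)$ for granted.
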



\begin{remark}
We give at Section \ref{sec:counter-ex-SC} the example of a spectrum with maximal rigidity from $ \mathbb{Z} _{ -}^{ d}$ which has a ``crossing'' in the $ u_{ 2}$ coordinate, in the sense that the spectrum is not simply connected on $  \mathbb  T  ^{ 2}$ as above, but the ``crossing path'' varies with the $ u_{ 1}$-level (Figure  \ref{fig:counter-ex}). Hence it seems that what really matters is whether the crossing path is straight or not.
\end{remark}

\begin{proof}
Theorems \ref{thm:SW} and \ref{thm:general-rigidity} yield that there is $ \varphi _{ 1}\in L^{ 2}(\s_{ 1})  \setminus \{0\}$ with $  { \rm supp} (\widehat{ \varphi_{ 1} \s_{ 1}})\cap  \e _{  - } = \emptyset $.  
 {Let $ \varphi _{ 2}\in  L^{ 2}( \tilde s)$ non-null, and $ \varphi  = \varphi _{ 1}  \otimes \varphi _{ 2}$.}
  For $ x=(x_{1}, \tilde x)\in \e\times   \e^{d-1} $
\begin{align*}
\F(\varphi \S) (x)=  \widehat{ \varphi _{1}\s_{ 1}  }\otimes  \widehat{\varphi _{ 2} \tilde \s  } (x) =  \widehat{\varphi _{1}\s_{ 1}}(x_{ 1}) \widehat{\Mint{\varphi _{ 2}} \tilde \s }( \tilde x) = 0\text{\rm{ if }}x_{ 1}<0 
\end{align*} by construction of $ \varphi _{ 1}$. We hence have found $ \varphi \in L^{ 2}(\S)$ such that $ \F(\varphi \S)$ is supported by $ (\e_{ -}\times \e ^{ d-1})^{ c}$, which yields that $ \S$ is not perfectly interpolable from the half space $ \e_{ - }\times \e^{ d-1} $ (Theorem \ref{thm:general-rigidity}).
\end{proof}
 
 \subsubsection{Proof of Proposition  \ref{prop:suff-DEZ-all}} 
 \label{sec:prf-prop-1}  
 
 Let $ \varphi\in L^{ 2}(\S) $ whose spectrum is supported by $ (\e_{ -}^{ d})^{ c}$. 
According to Theorem \ref{thm:general-rigidity}, we must prove $ \varphi \equiv 0$. We reason by contradiction. Then there is $ a<0$ and  $ \gamma _{ 1},\dots ,\gamma _{ d}$ functions with   support in $ [a,\infty )$(and Schwartz if $ \e = \mathbb{R}$) such that, with $ \Gamma (x) := \gamma _{ 1}(x)\dots \gamma _{ d}(x)$,
\begin{align*}
 \varepsilon : =  \left|
\int_{\e^{ d}}\Gamma \F(\varphi \S)
\right| >0.
\end{align*}
 Assume without loss of generality $ \varepsilon \leqslant  \frac{ 1}{ 2}   $ and  $ \|\gamma _{ i}\|\leqslant   \frac{ 1}{ 2}   .$
By  Theorem \ref{thm:SW}, for $ \varepsilon '>0,$
each $ \hat \gamma_{ i} $ can be approximated in $ L^{ 2}(\S_{ i})$ by  $ \hat   h_{ i}$ for some $  h_{ i}\in \mathscr C_{c}^{ b}((-\infty ,a])$ :
\begin{align*}
 \int_{ \hat \e} | \hat h_{ i}- \hat \gamma_{ i}  | ^{ 2}d\S_{ i}<\varepsilon '.
\end{align*}
Define the tensor product $ H  =  h_{ 1}  \otimes \dots  \otimes  h_{ d}$ so that $ \hat H = \hat h_{ 1}  \otimes \dots  \otimes \hat h_{ d}.$
Since $ \F(\varphi\S) $ is supported by $ (\e_{ -}^{ d})^{ c}$ and $ H$ by $ (-\infty ,a]^{ d}$, we have $  H \F(\varphi\S)  \equiv 0$, hence $ \int_{} \hat H \varphi d\S = 0$ and Plancherel's identity yields
\begin{align*}
 (2\pi )^{ d}\varepsilon = \left|
  \int_{} \hat \Gamma   \varphi d\S  
\right|\leqslant  \left|
\int_{}   \varphi  \hat H    d\S
\right|+\int_{} | \varphi  |   |  \hat H- \hat \Gamma  | d\S
 \leqslant  0 + \|\varphi \|_{ L^{ 2}(\S)}\sqrt{\int_{} | \hat H- \hat \Gamma  | ^{ 2}d\S}.
 \end{align*} 
We use the bound, with $  \delta _{ i}: = \hat \gamma _{ i}- \hat h_{ i},$
\begin{align*}
  | \hat \Gamma - \hat H |  \leqslant \sum_{i = 1}^{ d-1} | \hat \gamma _{ 1}\dots \hat \gamma _{ i-1} \delta _{ i} \hat h_{ i + 1} \dots  \hat h_{ d} | 
\end{align*}
and $  | \hat h_{ i} | \leqslant  | \delta _{ i} |  +  | \hat \gamma _{ i} | $. The assumption  \eqref{eq:tensor-bound} gives, for some $ c<\infty ,$
\begin{align*}
  \int_{} |  \hat \Gamma - \hat H | ^{ 2}d\S<cd  \max_{ i}\int_{} |  \delta _{ i} | ^{ 2}d\S_{ i} 
  \max_{ j}\max\left(
\int_{ \hat \e}| \hat h_{ j} | ^{ 2}d\S_{ j},\int_{ \hat \e}  | \delta _{ j} | ^{ 2}d\S_{ j}
\right)^{ d-1}<cd\varepsilon' (1/2)^{ d-1} \end{align*}
which gives a contradiction by choosing $ \varepsilon '  = \varepsilon/(cd2^{ d-1}) $.

  \subsection{Stealthy random measures  }
  \label{sec:stealthy}

 Stealthy systems are characterised in condensed matter physics as being transparent to a certain band of wavelengths,  {   which translates in the Fourier domain by having a spectral gap}.
It is reflected by the following general definition:

  \begin{definition}
  A random stationary measure $ \M$ on $\mathbb{R}^{d}$ is   {\it stealthy} if   {  $\text{\rm{Sp}}(\M)\neq \mathbb{R}^{ d}$}.
  \end{definition}  
Often, the gap is assumed to contain $0$, but it bears no importance on the rigidity results.  A non-exhaustive bibliographic sample from the physics literature about stealthy processes is  \cite{TZS,ZST,ZST-I,Morse,Klatt-nature}, see also more background in the introduction.

Mathematically, a typical example is the following: a shifted lattice is a point process of the form $\Lambda +U$, where $\Lambda \subset \mathbb{R}^{d}$ is a lattice and $U$ is uniformly distributed in the fundamental cell of $ \Lambda $  {  to ensure formal stationarity}. 
Such a model, or a finite union of independent shifted lattices, can be proven to be stealthy readily as its spectral measure is $ \Lambda ^{ *}  \setminus \{0\}$, where $ \Lambda ^{ *}$ is the dual lattice, by an application of Poisson's summation formula.  Physicists  put in evidence stealthy models of point processes which are also disordered, e.g. isotropic and mixing, through some simulation procedures.   On the other hand, it is difficult to rigourously establish the existence of disordered stealthy point processes.

  The interest also emerged in the mathematics literature,
 \cite{GL18} proved that stealthy point processes are maximally rigid on a compact $ A$,  and their result likely extends to more general random measures. In   \cite{AdiGhoshLeb}, the authors rather investigate those processes in terms of entropy per site, but still ask {\it how rigid are stealthy hyperuniform processes?}  
We prove here that stealthy processes (whether point processes or more general) are MR on unbounded domains: rigidity actually occurs on any closed strictly convex cone, i.e. a closed convex cone $ B$ not containing both $ x$ and $ -x$ for some $ x\neq 0.$ 
 In other words, for any closed strictly convex cone $ B$, $ A = B^{ c}$ forms a \AP~ with any non-empty open $ \hat  A$, see Table \ref{table:1}-(b).

   \begin{theorem}
   \label{thm:stealthy-cone}
    Let $ \M$ a stealthy \WSS random measure. Then for all closed strictly convex cone $ B$, $\M$ is LMR on $B$.   
    \end{theorem}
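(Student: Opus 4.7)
The plan is to combine Theorem \ref{thm:general-rigidity} with the Shapiro-type uniqueness pair alluded to just before the statement. Setting $A := B^c$, LMR on $B$ coincides with perfect interpolability from $A$; by Theorem \ref{thm:general-rigidity} it therefore suffices to show that no $\varphi \in L^2(\S) \setminus \{0\}$ satisfies $\sp(\varphi \S) \subset B$.

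First I would use stealthiness: $\S$ vanishes on a non-empty open set $O \subset \hat \e^d$, so $O$ contains an open ball $\hat A := B(u_0, \varepsilon)$, and for every $\varphi \in L^2(\S)$ the tempered measure $\varphi \S$ vanishes on $\hat A$. The Cauchy--Schwarz inequality combined with \eqref{eq:integr-SF} ensures that $\varphi \S$ is a tempered distribution, so that $\F(\varphi \S)$ is well-defined.

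Next I would invoke Shapiro's theorem, as extended to tempered distributions: for any closed strictly convex cone $B$ and any open ball $\hat A$, the pair $(B^c, \hat A)$ is a uniqueness pair, meaning that a tempered distribution $T$ on $\hat \e^d$ which vanishes on $\hat A$ and whose Fourier transform is supported in $B$ must satisfy $T \equiv 0$. Applied to $T = \varphi \S$ with $\sp(\varphi \S) \subset B$, this forces $\varphi \S = 0$ as a measure, hence $\varphi = 0$ in $L^2(\S)$, and Theorem \ref{thm:general-rigidity} concludes.

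The main obstacle is the distributional version of Shapiro's theorem, whose classical formulation typically assumes Schwartz or $L^p$ functions. I would handle this by a standard mollification: convolve $\varphi \S$ with a non-negative smooth bump $\eta_n$ supported in $B(0, 1/n)$, producing a smooth tempered function $f_n := \varphi \S * \eta_n$ that vanishes on $B(u_0, \varepsilon - 1/n)$ and whose Fourier transform $\F(f_n) = \F(\varphi \S)\cdot \F(\eta_n)$ is still supported in $B$. Applying Shapiro's classical statement to each $f_n$ yields $f_n \equiv 0$, and the convergence $f_n \to \varphi \S$ in the tempered-distribution sense gives $\varphi \S = 0$. The strict convexity of $B$ is essential here: a half-space contains entire lines, along which one can no longer extract the analytic-continuation information underlying Shapiro's argument, and the corresponding uniqueness pair statement fails in general.
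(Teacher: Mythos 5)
Your proof follows essentially the same route as the paper's: reduce via Theorem \ref{thm:general-rigidity} to showing that no nonzero $\varphi\in L^{2}(\S)$ has $\sp(\varphi\S)\subset B$, then invoke Shapiro's theorem after mollifying $\varphi\S$ by a small bump supported inside the spectral gap so that the mollified object is a genuine tempered function vanishing on a slightly smaller ball with Fourier transform still supported in $B$ --- this is precisely the paper's argument with its Schwartz function $\kappa$ supported in $B(0,\varepsilon/2)$. The proposal is correct at the same level of rigor as the paper's own proof.
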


  The proof relies on a theorem of Levinson, generalised to higher dimension in  \cite{Shapiro}, that we further generalise to tempered distributions. It can be seen as a way to extend the previous theorems to higher dimensions. 
    Shapiro in fact states his result in terms of  {\it minor cone}, defined as a cone $ B$ such that for some $ t_{ 0}\in \mathbb{R}^{ d}$
\begin{align}
\label{eq:minor-cone}
 \inf_{ t\in B} \langle t_{ 0},t \rangle>0.
\end{align}

        \begin{theorem}
        \label{thm:shapiro}[\cite{Shapiro},Theorem A']
     Let $ B$ a minor cone, and $ \psi :\mathbb{R}^{d}\to \mathbb{C}$ a tempered function such that for some $ \delta >0$,
\begin{align*}
\int_{ B^{ c}} | \psi (t) | e^{\delta  | t | }dt<\infty 
\end{align*}
and such that $ \F \psi  = 0$ on some nonempty open set. Then $ \psi \equiv 0.$
     \end{theorem}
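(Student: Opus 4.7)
The plan is a Paley--Wiener plus Schwarz reflection argument. Since $B$ is a minor cone, pick $t_0\in\mathbb{R}^d$ as in \eqref{eq:minor-cone}; by compactness on the unit sphere this gives a constant $c>0$ with $\langle t_0,t\rangle\geq c|t|$ for all $t\in B$. Write $\psi = \psi_1+\psi_2$ with $\psi_1 = \psi\mathbf{1}_{B^c}$ and $\psi_2 = \psi\mathbf{1}_B$; by hypothesis $\psi_1\in L^1(e^{\delta|t|}\,dt)$, while $\psi_2$ is a tempered function supported in $B$. The goal is to realise $\F\psi$ as the distributional boundary value of a function holomorphic in a complex tube touching the real axis, and then exploit the vanishing on $O$.

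For the exponentially integrable piece, the classical Paley--Wiener theorem gives a bounded holomorphic extension $\Psi_1$ of $\F\psi_1$ on the strip $\{\zeta\in\mathbb{C}^d:|\mathrm{Im}\,\zeta|<\delta\}$. For $\psi_2$, the tempered bound $|\psi_2(t)|\leq C(1+|t|)^N$ combined with $\mathrm{Im}(\zeta)\cdot t\leq -\lambda c|t|$ on $B$ when $\mathrm{Im}(\zeta)=-\lambda t_0$, $\lambda>0$, shows that the Fourier--Laplace integral
\begin{equation*}
\Psi_2(\zeta) = \int_B \psi_2(t)\,e^{-i\zeta\cdot t}\,dt
\end{equation*}
is absolutely convergent and holomorphic on the tube $\mathbb{R}^d + i\Gamma$, where $\Gamma := \mathrm{int}(-B^*)$ is the non-empty open convex cone $\{\eta:\eta\cdot t<0\ \forall t\in B\setminus\{0\}\}$. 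Consequently $\Psi := \Psi_1+\Psi_2$ is holomorphic on the connected non-empty tube $\mathcal{T}:=\mathbb{R}^d+i(\Gamma\cap\{|\eta|<\delta\})$, whose imaginary base has $0$ on its boundary, and by standard Laplace-transform theory $\Psi(\cdot+i\eta)\to \F\psi$ in $\mathcal{S}'$ as $\eta\to 0$ through $\Gamma\cap\{|\eta|<\delta\}$.

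It remains to deduce $\Psi\equiv 0$ from the vanishing of its distributional boundary on the non-empty open set $O$. Reducing to one complex variable along the slice $\zeta=\xi-i\lambda t_0$ with $\xi$ near a point of $O$, the Schwarz reflection principle extends $\Psi$ holomorphically across $O$, the extension being identically zero on $O$; the identity principle on the connected tube $\mathcal{T}$ then forces $\Psi\equiv 0$, whence $\F\psi\equiv 0$ and $\psi\equiv 0$ by Fourier inversion. The main obstacle — which is exactly where Shapiro's classical argument must be genuinely refined — is this last step: justifying the distributional edge-of-the-wedge/reflection for a boundary value that is only known to vanish in the sense of tempered distributions on $O$. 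The rescue is that, on $O$, $\F\psi$ is smooth (indeed zero), and a holomorphic function in a tube whose distributional boundary is $C^\infty$ on an open set extends smoothly to that set, at which point one-variable Schwarz reflection applies.
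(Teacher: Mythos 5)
The paper does not actually prove this statement: it is imported verbatim from Shapiro (Theorem A$'$ of \cite{Shapiro}), so there is no internal argument to compare yours against. That said, your reconstruction is the standard Fourier--Laplace proof and its skeleton is sound. The splitting $\psi=\psi\mathbf{1}_{B^{c}}+\psi\mathbf{1}_{B}$, the bounded holomorphic extension $\Psi_{1}$ of $\F(\psi\mathbf{1}_{B^{c}})$ to the strip $\{|\mathrm{Im}\,\zeta|<\delta\}$, the holomorphy of the Fourier--Laplace transform $\Psi_{2}$ on the tube over $\Gamma$ (non-empty and open precisely because $B$ is minor: for every $\eta$ in a small ball around $-t_{0}$ one has $\eta\cdot t\leqslant -c_{\eta}|t|$ on $B$), and the $\mathcal{S}'$-convergence $\Psi(\cdot+i\eta)\to\F\psi$ are all correct. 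Note also that on tubes over compact subcones of $\Gamma$ your construction yields the polynomial bound $|\Psi(x+iy)|\leqslant C|y|^{-N'}$, which is exactly the growth hypothesis needed below.

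The one step that is not yet closed is the last one, and you have correctly identified it. Since the boundary value of $\Psi$ is literally $0$ on $O$, the efficient way to finish is to quote the distributional uniqueness theorem for boundary values of holomorphic functions in truncated tubes --- the one-cone case of the edge-of-the-wedge theorem (equivalently, edge-of-the-wedge applied to the pair $(\Psi,0)$; see H\"ormander, \emph{The Analysis of Linear Partial Differential Operators I}, Theorem 3.1.15). This gives $\Psi=0$ on a truncated tube over $O$ with base a proper subcone of $\Gamma$, whence $\Psi\equiv 0$ on the connected tube $\mathcal{T}$ by the identity principle, so $\F\psi=0$ and $\psi\equiv 0$. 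Your proposed route --- first upgrade the boundary behaviour via ``a holomorphic function in a tube whose distributional boundary value is $C^{\infty}$ on an open set extends smoothly up to that set'', then apply one-variable Schwarz reflection on the slices $\xi-i\lambda t_{0}$ --- can be made to work, and the reflection step itself is fine once continuity up to $O$ is known. But the quoted regularity statement is a theorem of essentially the same depth as the uniqueness theorem, proved with the same machinery, so as written you have displaced the difficulty rather than resolved it; note that the issue is intrinsic to the statement, since a tempered function $\psi$ only guarantees that $\F\psi$ is a distribution. Either cite the boundary-value uniqueness theorem explicitly or prove it in your setting; with that reference supplied, the proof is complete.
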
  
    
    The  strict convexity assumption is essential as it is  easy to build Schwartz functions $\psi $ such that $\psi $ and $ \hat \psi $ both vanish on a half space, see Proposition  \ref{prop:separable}.
  The cone $ B$ in  Shapiro's theorem can be taken closed without loss of generality.  Let us show that a closed minor cone is the same as a  closed strictly convex cone. A closed cone which is not stricly convex contains $ x$ and $ -x$ for some $ x\neq 0$, which directly contradicts  \eqref{eq:minor-cone}. Conversely, for a closed strictly convex cone $ B$, the dual cone $ B^{*} = \{t: \langle t,x  \rangle\geqslant 0,x\in C\}$ has a non-empty interior. Let $ t_{0}$ be in this interior. If there is $ t\in B  \setminus \{0\}$ with $ \langle t_{0},t \rangle = 0$, let $ u = t_{0}-t$, and $ \varepsilon >0$ such that $ t_{0} \pm \varepsilon u\in B^{*}$. We have 
\begin{align*}
\langle t_{0} + \varepsilon u, t \rangle = \varepsilon \langle u,t \rangle = -\varepsilon \langle t,t  \rangle<0,
\end{align*}
 which contradicts the definition of $ B^{*}.$ Hence $ B$ is minor.

    \begin{proof}[Proof of Theorem  \ref{thm:stealthy-cone}]
 Let   $ \varphi  \in L^{ 2}(\S),\Psi = \varphi \S.$  Assume $ \F(\Psi)$ is supported by $ B.$ To conclude the proof it suffices by Theorem \ref{thm:general-rigidity} to prove $ \Psi\equiv 0.$
Knowing that $ \Psi$ has a gap $ B(u_{0},\varepsilon )$ in its support by the stealthiness assumption, if $ \Psi$ had a density $ \psi $, Shapiro's theorem would allow to conclude. Let us show that Shapiro's  theorem applies to general distributions.
 In the general case, let $ \kappa $ a Schwartz function supported by $ B({0},\varepsilon /2)$. Then by   \cite[Th. 7.19-(c)]{Rudin2}, $$ \psi (u):= \Psi  \ast  \kappa (u)= \Psi ( \tau _{-u}  \kappa )$$ defines a tempered function with Fourier transform $ \F\psi = \hat \kappa \F\Psi$.  It has a spectral gap in $ B(u_{0},\varepsilon /2)$ because $ \tau _{-u}\kappa $ has support in $ B(u_{0},\varepsilon )$ for $ \|u-u_{0}\|\leqslant \varepsilon /2$. Also
$ 
\F \psi  = \hat \kappa \F\Psi
$
vanishes on $ B^{c}$, as $ \F\Psi .$ Hence $ \psi \equiv0$ for all such $ \kappa $, meaning $ \hat \kappa \F\Psi\equiv 0$ for all such $\kappa $, it implies $ \F\Psi = 0$, hence $ \Psi = 0,$  which concludes the proof.     \end{proof}

    \subsubsection{  {  Maximal rigidity on a compact}}
 
 The gap assumption for a stealthy measure is physically relevant due to the special optical properties it conveys. Still the maximal rigidity behaviour on a compact remains if this assumption is relaxed in some ways that we explore here. First of all, one can make an assumption only on the continuous part $ \s$ of $ \S$. Second, one can relax the property that the zero is a spectral gap to being a deep zero, i.e. where all the derivatives vanish, or a set of zeros which is large, without necessarily containing a nonempty open set. 
  Say more generally that $ \s$ is  {\it determining}  if $ L^{2}(\s^{-1}) $ contains no other entire function than $ 0$ (under the convention $ 0^{ -1} = \infty $).
  
   \begin{corollary}
 \label{cor:determining}Let $ \M$ a \WSS random measure with spectral density $ \s.$
If $ \s$ is determining and $ A$ bounded, then $ \M$ is MR on bounded $ A$. 
  \end{corollary}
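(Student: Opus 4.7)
The plan is to invoke Theorem~\ref{thm:general-rigidity} and reduce the claim to the following statement: every $\varphi\in L^{2}(\S)$ whose Fourier transform $\F(\varphi\S)$ is supported in the bounded set $A$ must vanish in $L^{2}(\S)$. I fix such a $\varphi$ and aim to derive $\varphi\equiv 0$.

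First I would apply the Paley--Wiener--Schwartz theorem. Since $\F(\varphi\S)$ is a compactly supported tempered distribution on $\e^{d}$, its inverse Fourier transform --- which up to the usual constants is $\varphi\S$ --- is (the restriction to $\hat{\e}^{d}$ of) an entire function $g$ of exponential type on $\mathbb{C}^{d}$. In the discrete setting, $\F(\varphi\S)=\sum_{\k\in A}c_{\k}\delta_{\k}$ is finitely supported, so $g(u)=\sum_{\k\in A}c_{\k}u^{\k}$ is a trigonometric polynomial on $\mathbb{T}^{d}$, entire in the natural sense.

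Next I would invoke the Lebesgue decomposition $\S=\s\Leb+\S_{c}$. The locally finite measure $\varphi\S$ decomposes as $\varphi\s\Leb+\varphi\S_{c}$, but it also coincides with the absolutely continuous measure $g\Leb$. Uniqueness of the Lebesgue decomposition then forces $\varphi\S_{c}=0$ and $\varphi\s=g$ almost everywhere on $\hat{\e}^{d}$. I would then feed this into the determining hypothesis via the bound
\begin{align*}
\int_{\hat{\e}^{d}}\frac{|g|^{2}}{\s}\,du=\int_{\{\s>0\}}|\varphi|^{2}\s\,du\leqslant \|\varphi\|_{L^{2}(\S)}^{2}<\infty,
\end{align*}
which places the entire function $g$ in $L^{2}(\s^{-1})$; by the determining hypothesis $g\equiv 0$, so $\varphi\s=0$ almost everywhere and $\varphi\S_{c}=0$, hence $\varphi=0$ in $L^{2}(\S)$.

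The main obstacle is the Paley--Wiener step: although $\F(\varphi\S)$ is automatically a compactly supported tempered distribution, to carry out the Lebesgue decomposition I need $\varphi\S$ to be a genuine locally finite complex measure, not merely an abstract distribution given by an entire function. This is fine because $\varphi\in L^{1}_{\text{loc}}(\S)$ and $\S$ is tempered by \eqref{eq:integr-SF-general}, so the distributional identity $\varphi\S=g$ upgrades to the pointwise identity $\varphi\s=g$ a.e.; once this identification is secured, the determining condition closes the argument immediately.
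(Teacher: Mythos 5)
Your proposal is correct and follows essentially the same route as the paper: reduce via Theorem~\ref{thm:general-rigidity}, apply the Schwartz--Paley--Wiener theorem to conclude that $\varphi\S$ is an entire function, deduce $\varphi\S=\varphi\s$ and $\int|\varphi\s|^{2}\s^{-1}\leqslant\|\varphi\|_{L^{2}(\S)}^{2}<\infty$, and invoke the determining hypothesis. Your explicit treatment of the Lebesgue decomposition (forcing $\varphi\S_{c}=0$) simply spells out a step the paper leaves implicit in the assertion ``$\psi=\varphi\S$ is an entire function, hence $\psi=\varphi\s$''.
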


   This is the case if for instance  $ \s_{}$ has a  {\it  deep zero} ,  i.e. if for some $ u_{0}\in \mathbb{R}^{d},k\in \mathbb{Z} $
\begin{align*}
\int_{ B(u_{0},\varepsilon )} \frac{ \|u-u_{0}\|^{k}}{\s(u)}du = \infty.
\end{align*}
 {  This assumption can be seen as a weak form of stealthiness.}
It recovers the results of maximal rigidity of  \cite{GL18}.  
   
   \begin{proof}
Let $ \varphi \in L^{ 2}(\S)$.      Schwartz'Paley-Wiener theorem implies that if $  {  \sp}( \varphi \S)$ has a bounded support, then $\psi : =  \varphi \S$ is an entire function, hence $ \psi  = \varphi \s$, and 
\begin{align*}
 \int_{} | \psi |  ^{ 2}\s^{ -1} = \int_{} | \varphi |  ^{ 2}\s\leqslant \|\varphi \|^{ 2}L_{ 2}(\S)<\infty .
\end{align*}
By the  assumption, $\psi  =  \varphi \S \equiv 0$, hence by Theorem \ref{thm:general-rigidity}, $ \M$ is LMR on $ A.$    
   \end{proof}

 Call  {\it uniqueness domain} of $ \mathbb{R}^{d}$ a set $ D\subset \mathbb{R}^{d}$ such that if a real entire function $ \psi $ vanishes on $ D$ then $ \psi \equiv 0.$ 
 We see that if $ \s$ vanishes on a uniqueness domain, it is by definition determining.  In any case, a uniqueness domain forms a uniqueness pair with any set having a gap by Schwartz' Paley-Wiener  theorem.
 In dimension $ 1$, using the Weierstrass factorisation theorem, uniqueness domains are   sets with concentration points; in dimension $ d\geqslant 2$, the situation is more complicated, there are in particular no entire function with zeros isolated in $  \mathbb C ^{d}$  \cite{Lelong}, but any set with positive Lebesgue measure on $ \mathbb{R}^{d}$ is a uniqueness domain by 
 \cite{Mityagin}. 

 \section{Quasicrystal interpolation from minor cones}
 \label{sec:quasi}

Quasicrystals are broadly speaking atomic measures whose spectrum is purely atomic, supposed to reflect some  {\it aperiodic order}. Their study emerged after experimental discoveries in physics in the 80s and is related to many fields, including crystallography, aperiodic tilings, almost periodicity, see  Section \ref{sec:intro} for more background. Such objects are traditionally assumed to be homogeneous in space, and it is thus natural to consider random constructions that are invariant under translations  (\cite{HartBjo,torquato-quasi}). 
  Maximal rigidity on a compact $ A$ was shown in  \cite{Lr24}, the compactness of $ A$ allows to use the Paley-Wiener theorem for entire functions of exponential type. We rather investigate here maximal rigidity on unbounded $ A$, where Paley-Wiener theorem does not apply and complex analysis cannot be used directly anymore.

\begin{theorem}
\label{thm:quasi}
 Let $ \M$ a \rev{\WSS} random measure whose spectral measure $ \S$ is purely atomic. Then $ \M$ is perfectly interpolable from any cone with non-empty interior (Table \ref{table:1}-(c)).
\end{theorem}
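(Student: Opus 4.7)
The plan is to verify the criterion of Theorem~\ref{thm:general-rigidity}: for every $\varphi \in L^{2}(\S) \setminus \{0\}$ and every cone $C \subset \e^d$ with nonempty interior, the tempered distribution $F := \F(\varphi\S)$ must have support meeting $C$. Since $\S$ is purely atomic, so is $\varphi\S$; write it as $\sum_{j} c_{j} \delta_{u_{j}}$, so that formally $F = \sum_j c_j e^{-i u_j \cdot x}$ is an almost-periodic object whose Fourier--Bohr coefficients are precisely the $c_j$. We must show that $F$ vanishing on $\mathrm{int}(C)$ forces every $c_j$ to be zero.

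The geometric reduction is that $C$ contains a minor cone. Since $C$ has nonempty interior one can pick $B(y_{0}, r) \subset C$ with $r < \|y_{0}\|$; by the dual-cone argument given before the proof of Theorem~\ref{thm:stealthy-cone}, the closed cone $K$ generated by $B(y_0, r)$ is strictly convex, lies inside $C$, and contains $B(t y_{0}, t r)$ for every $t \geq 0$. In particular, for any $R > 0$ and $t \geq R/r$, the translated ball $B(t y_0, R)$ lies entirely in $C$, so $C$ contains arbitrarily large balls placed deep along the cone's axis.

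Assume first that $\S$ is a finite measure. Then Cauchy--Schwarz yields $\sum_{j} | c_{j} | \leq \|\varphi\|_{L^{2}(\S)} \sqrt{\S(\hat\e^{d})}<\infty$, so $F$ is a continuous bounded Bohr almost periodic function on $\e^d$, and each Fourier--Bohr coefficient is recovered as a mean value
\begin{align*}
c_{j_{0}} \;=\; \lim_{R \to \infty} \frac{1}{|B(0,R)|} \int_{B(y,R)} F(x)\, e^{i u_{j_{0}} \cdot x}\, dx,
\end{align*}
uniformly in $y \in \e^d$: the off-diagonal terms involve $\widehat{\mathbf{1}_{B(0,R)}}(u_j - u_{j_0}) / |B(0,R)|$, which decays like $R^{-(d+1)/2}$ by a standard Bessel asymptotic, and uniform convergence over $y$ follows from splitting the sum over $j$ into a finite piece (negligible in $R$) and a tail (small in $\ell^{1}$-norm). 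Choosing $y = t y_0$ with $t \geq R/r$ forces $B(y, R) \subset C$, on which $F$ vanishes identically, so the average is $0$ for every $R$; letting $R \to \infty$ gives $c_{j_0} = 0$ for every atom, whence $\varphi\S = 0$.

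The main obstacle is the case where $\S$ is not a finite measure, so $F$ is only an almost-periodic \emph{tempered distribution} rather than a continuous function. The plan to handle this is to regularise: convolve $\varphi\S$ in $\hat\e^d$ with a Schwartz function $\kappa$ whose Fourier transform $\hat\kappa$ is a compactly supported bump, obtaining a smooth function $\psi := (\varphi\S) \ast \kappa$; its Fourier transform $\F\psi = F \cdot \hat\kappa$ still vanishes on $\mathrm{int}(C)$ and now has compact support, so by the Paley--Wiener theorem $\psi$ extends to an entire function of exponential type on $\mathbb{C}^d$ (this is where the ``complex analysis exploiting the atomic hypothesis'' enters). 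One verifies that $\psi$ is uniformly almost periodic using the Schwartz decay of $\kappa$ together with the tempered bound \eqref{eq:integr-SF-general} on $\S$; applying the Bohr-averaging argument to $\psi$ gives $\psi \equiv 0$ for every such $\kappa$, and varying $\kappa$ so that $\hat\kappa$ ranges over a family separating points of $\e^d$ concludes $F \equiv 0$ and hence $\varphi\S = 0$.
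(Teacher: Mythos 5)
Your reduction of the theorem to the statement ``$F:=\F(\varphi \S)$ supported in $C^{c}$ forces all atom coefficients $c_{j}$ to vanish'' is the right one, and your treatment of the case where $\S$ is a \emph{finite} measure is correct and genuinely different from the paper's route: you exploit that $\sum_{j}|c_{j}|<\infty$ makes $F$ a Bohr almost periodic function whose Fourier--Bohr coefficients can be computed by averaging over arbitrarily large balls placed anywhere, in particular deep inside the cone where $F$ vanishes. The paper instead discretises $\M$ into a $\mathbb{Z}^{d}$-field $\M_{t}$, whose spectral measure on $\mathbb{T}^{d}$ is \emph{automatically finite} and purely atomic, proves a one-sided polynomial approximation lemma on the polydisc (Lemma \ref{lm:no-density-torus}, via the generating function $\int \prod_{i}(1-z_{i}u_{i})^{-1}d\mu$ and radial limits), and lifts back; that detour is exactly how the paper avoids the infinite-mass issue, and it yields the slightly stronger conclusion of interpolability from an orthant/convex cone rather than only from cones containing large balls.

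The genuine gap is in your treatment of the case where $\S$ is not finite, which is the generic case on $\mathbb{R}^{d}$ (e.g.\ the Dirac comb has $\S=\sum_{\k\neq 0}a\,\delta_{\k}$). You convolve $\varphi\S$ \emph{on the frequency side} with $\kappa$ whose Fourier transform $\hat\kappa$ is compactly supported, obtaining $\psi=\sum_{j}c_{j}\kappa(\cdot-u_{j})$ on $\hat\e^{d}$. This object is neither almost periodic (a superposition of Schwartz bumps at the atoms; with one atom it decays at infinity) nor known to vanish anywhere, so ``applying the Bohr-averaging argument to $\psi$'' has no content; and the function that does vanish on $\mathrm{int}(C)$, namely $\F\psi=F\hat\kappa$, is \emph{compactly supported}, so its Bohr means over large balls are trivially zero and yield no information --- a nontrivial compactly supported distribution supported in $C^{c}$ is in no way contradictory. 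The regularisation must be performed on the \emph{direct} side: take $g\in \mathscr C_{c}^{\infty}(B(0,\delta))$ and form $F\ast g=\sum_{j}c_{j}\hat g(u_{j})e^{-iu_{j}\cdot x}$; the coefficients $c_{j}\hat g(u_{j})$ are absolutely summable because $\hat g$ is Schwartz and $\varphi\S$ is a tempered measure by \eqref{eq:integr-SF-general} and Cauchy--Schwarz, while $F\ast g$ still vanishes on the balls $B(ty_{0},tr-\delta)$, which are arbitrarily large. Your finite-case argument then applies verbatim and gives $c_{j}\hat g(u_{j})=0$ for all $j$; varying $g$ over an approximate identity kills every $c_{j}$. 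With that substitution the proof closes; as written, the key step for the case the theorem is actually about does not.
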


   {   Before giving the mathematical proof, let us discuss the relation with the traditional deterministic setting of quasicrystals. Let $ P$ a  tempered measure on $ \mathbb{R}^{ d}$  having a discrete pure point Fourier transform $ \hat P = \sum_{i}a_{ i}\delta _{ s_{ i}}$ in the  sense of distributions (with the convention that $ s_{ 0} = 0$).  Then we consider for $ n\in \mathbb{N}$ the randomly translated version $   \tau _{ n U}P $ where $ U$ is random and uniform in the unit ball. 
We make the classical assumption that $ P$ is  {\it translation bounded}, i.e. 
\begin{align}
\label{eq:QC-density}
 \forall  r>0,\sup_{ x\in \mathbb{R}^{ d}}P(B(x,r))<\infty .
\end{align}
 By \cite[Lemma 14.15]{kallenberg2002foundations}, the random processes $ \tau _{nU }P$ admits at least a limit point $ \P$ in distribution in the vague topology, i.e. there is a subsequence  converging in law to $ \P$. There is no unicity in general, but there is if one additionally requires ergodicity.

 \begin{proposition}
  Let $ P$ a translation bounded measure with discrete spectrum.
   Then   any limit point $ \P$ of $ (\tau _{ nU}\P)_{ n\geqslant 1}$ is  stationary and has atomic spectrum, hence 
 the result of Theorem  \ref{thm:quasi} applies: $ \P$ is perfectly interpolable from any cone $ C$ with non-empty interior.   \end{proposition}

\begin{proof}
\emph{Step 1: existence of limit points.}
Since $P$ is translation bounded, for every compact $K\subset\mathbb R^d$,
\[
\sup_{x\in\mathbb R^d} (\tau_xP)(K)<\infty.
\]
By the standard compactness criterion for Radon measures in the vague topology  \cite[Lemma 14.15]{kallenberg2002foundations}, the translation orbit
\[
\{\tau_xP:x\in\mathbb R^d\}
\]
has relatively compact closure. Hence the laws of $\P_n=\tau_{nU}P$ are tight, so at least one subsequence converges in distribution to some $ \P.$

\emph{Step 2: stationarity of the limit.}
Let $F$ be a bounded continuous functional on the space of Radon measures equipped with the vague topology. Write $B_n:=B(0,n)$.
Since $nU$ is uniform on $B_n$,
\[
\mathbf E\left[F(\tau_x\P_n)\right]
=
\frac1{\Leb(B_n)}\int_{B_n} F(\tau_{x+y}P)\,dy
=
\frac1{\Leb(B_n)}\int_{x+B_n} F(\tau_zP)\,dz.
\]
Therefore
\[
\left|
\mathbf E\left[F(\tau_x\P_n)\right]
-
\mathbf E\left[F(\P_n)\right]
\right|
\le
2\|F\|_\infty\,
\frac{\Leb\left((x+B_n)\Delta B_n\right)}{\Leb(B_n)}\xrightarrow[n\to\infty]{}0
\qquad\text{for every }x\in\mathbb R^d.
\]
Passing to the convergent subsequence $n_k$ and using weak convergence of $\P_{n_k}$, we obtain
\[
\mathbf E\left[F(\tau_x\P)\right]
=
\mathbf E\left[F(\P)\right]
\qquad\text{for all }x\in\mathbb R^d.
\]
Hence $\tau _{ x}\P \stackrel{(d)}{=}  \P$  i.e. $\P$ is stationary.

\smallskip
\noindent
\emph{Step 3:   pure point spectral measure.}
Let
\[
\psi_n(\xi):=\frac1{\Leb(B_n)}\int_{B_n} e^{-2\pi i\langle x,\xi\rangle}\,dx
=
\mathbf E\left[e^{-2\pi i\langle X_n,\xi\rangle}\right].
\]
Then $|\psi_n(\xi)|\le1$, $\psi_n(0)=1$, and by the Riemann--Lebesgue lemma,
\[
\psi_n(\xi)\xrightarrow[n\to\infty]{}0
\qquad\text{for every }\xi\ne0.
\]

Fix $\varphi\in C_c^\infty(\mathbb R^d)$. Since $\widehat P=\sum_{s\in S} a_s\delta_s$ is tempered and discrete $ \hat \varphi $ has fast decay, the series below converge absolutely:
\[
\P_n(\varphi)
=
\sum_{s\in S} a_s\,\widehat\varphi(-s)\,e^{-2\pi i\langle X_n,s\rangle}.
\]
Therefore
\[
\mathbf E[\P_n(\varphi)]
=
\sum_{s\in S} a_s\,\widehat\varphi(-s)\,\psi_n(s),
\]
and
\[
\mathbf E\left[|\P_n(\varphi)|^2\right]
=
\sum_{s,t\in S}
a_s\overline{a_t}\,
\widehat\varphi(-s)\overline{\widehat\varphi(-t)}\,\psi_n(s-t).
\]
Hence
\[
 \textrm{Var}(\P_n(\varphi))
=
\sum_{s,t\in S}
a_s\overline{a_t}\,
\widehat\varphi(-s)\overline{\widehat\varphi(-t)}
\left(\psi_n(s-t)-\psi_n(s)\overline{\psi_n(t)}\right).
\]
Since
\[
\sum_{s,t\in S}|a_s|\,|a_t|\,|\widehat\varphi(-s)|\,|\widehat\varphi(-t)|<\infty
\]
and $|\psi_n|\le1$, dominated convergence applies. Using
\[
\psi_n(\xi)\to \mathbf 1_{\{\xi=0\}},
\]
we get
\[
 \textrm{Var}(\P_n(\varphi))
\longrightarrow
\sum_{s\in S}|a_s|^2\,|\widehat\varphi(-s)|^2
-
|a_0|^2\,|\widehat\varphi(0)|^2.
\]
On the other hand, by translation boundedness, the random variables $\P_n(\varphi)$ are uniformly bounded, and since $\P_{n_k}\Rightarrow\P$,
\[
 \textrm{Var}(\P_{n_k}(\varphi))\to  \textrm{Var}(\P(\varphi)).
\]
Therefore
\[
 \textrm{Var}(\P(\varphi))
=
\sum_{s\in S}|a_s|^2\,|\widehat\varphi(-s)|^2
-
|a_0|^2\,|\widehat\varphi(0)|^2.
\]
This means   that the spectral measure is atomic
  \end{proof}

  }

 {  Let us now turn towards the proof of Theorem  \ref{thm:quasi}, by giving an actually stronger statement.}
The result looks like the conclusion of Proposition  \ref{prop:suff-DEZ-all}, but the latter cannot be applied  because admissible spectral  atomic measures are not necessarily dominated by tensor products of 1D admissible atomic measures. We instead  apply the result to a discretised version of the quasicrystal.
Denote $ C_{\k,t }=t (\k+[0,1)^{d}),t>0,\k\in \mathbb{Z} ^{ d},$ and  define the discrete field $
\M_{t }(\k):=\M(C_{\k,t}). $ The  $ \mathbb{R}^{ d}$-stationarity of  $ \M$ transfers as a $ \mathbb{Z} ^{ d}$-stationarity for $ \M_{ t}$.
We shall prove that for each $ t >0$ (even ``large''), this field is interpolable from the orthant, and therefore that $ \M$ is also interpolable from $ \mathbb{R}_{ -}^{ d}$ in the continuous space. Denote by $ \S_{ t }$ the spectral measure of $ \M_{ t }$ on $  \mathbb  T  ^{ d}.$  

\begin{lemma}
\label{lm:discrete-quasi}For $ \S$ purely atomic, $t>0, \S_{ t }$ is purely atomic. \end{lemma}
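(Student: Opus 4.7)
The plan is to express the spectral measure $\S_t$ of the discretised field $\M_t$ as the pushforward of a purely atomic measure on $\mathbb{R}^d$ under the covering map $\pi_t(u):=e^{-itu}$ (acting componentwise). Since the pushforward of a countable sum of Dirac masses is again a countable sum of Dirac masses, pure atomicity of $\S_t$ will then be immediate.

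First, I would fix $f:\mathbb{Z}^d\to\mathbb{C}$ with finite support and unfold
\begin{align*}
\M_t(f) \;=\; \sum_{\k\in\mathbb{Z}^d} f(\k)\,\M(C_{\k,t}) \;=\; \M(F),\qquad F := \sum_{\k} f(\k)\,\mathbf{1}_{C_{\k,t}}.
\end{align*}
Setting $K:=\mathbf{1}_{t[0,1)^d}$, a direct change of variable in the continuous Fourier transform gives the factorisation
\begin{align*}
\hat F(u) \;=\; \hat K(u)\sum_{\k} f(\k)\,e^{-it\k\cdot u} \;=\; \hat K(u)\,\hat f\bigl(e^{-itu}\bigr),
\end{align*}
where on the right $\hat f$ denotes the discrete Fourier transform on $\mathbb{T}^d$ from the paper's introduction.

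Next, applying the continuous phase-space formula \eqref{phase-continuous} to $\M(F)$ and comparing with the discrete phase-space formula \eqref{eq:phase} applied to $\M_t(f)$ yields
\begin{align*}
\int_{\mathbb{T}^d} |\hat f(v)|^2\,\S_t(dv) \;=\; \int_{\mathbb{R}^d} |\hat K(u)|^2\,|\hat f(e^{-itu})|^2\,\S(du)
\end{align*}
for every trigonometric polynomial $\hat f$ on $\mathbb{T}^d$. By density of trigonometric polynomials in $\mathscr{C}(\mathbb{T}^d)$, this identifies $\S_t$ as the pushforward $(\pi_t)_*\!\bigl(|\hat K|^2\S\bigr)$. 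Specialising to $f=\delta_0$ shows this pushforward has finite total mass $(2\pi)^d\var(\M(C_{0,t}))<\infty$, so it is well-defined as a finite Borel measure on $\mathbb{T}^d$.

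Finally, decomposing the purely atomic measure as $\S=\sum_n c_n\,\delta_{u_n}$ (at most countably many terms, since the total mass of $|\hat K|^2\S$ is finite), the pushforward formula delivers
\begin{align*}
\S_t \;=\; \sum_n c_n\,|\hat K(u_n)|^2\,\delta_{\pi_t(u_n)},
\end{align*}
a countable sum of Dirac masses with summable weights, hence purely atomic on $\mathbb{T}^d$. The only substantive content is the Fourier-theoretic identification of $\S_t$ as a pushforward; atomicity is then tautological, and I do not foresee a serious obstacle, since the factorisation of $\hat F$ is elementary and integrability of $|\hat K|^2$ against $\S$ is forced by the finiteness of $\var(\M(C_{0,t}))$.
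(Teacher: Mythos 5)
Your proof is correct and follows essentially the same route as the paper: unfold $\M_t(f)$ as a continuous linear statistic, factorise its Fourier transform as $\hat K(u)\,\hat f(e^{-itu})$ (the paper's $t^d J(ts)\hat f(\cdot)$), and read off $\S_t$ as the image of $|\hat K|^2\S$ under $u\mapsto e^{-itu}$, which visibly preserves pure atomicity. The paper simply writes out the resulting atomic decomposition $\S_t = t^{2d}\sum_j a_j|J(ts^j)|^2\delta_{e^{its^j}}$ directly instead of phrasing it as a pushforward identity.
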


\begin{lemma}
\label{lm:no-density-torus}
Let $ \S'$ a purely-atomic non-negative measure on $  \mathbb  T  ^{ d}$. Let $ \varphi \in L^{ 2}(\S')$ whose spectrum is contained in $ (\mathbb{Z} _{  + }^{ d})^{ c}$, then $ \varphi  \equiv 0$.

\end{lemma}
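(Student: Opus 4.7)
The plan is to reduce Lemma \ref{lm:no-density-torus}, by induction on $d$, to iterated applications of the classical F.\ and M.\ Riesz theorem on the unit circle.

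First I realise $\varphi\S'$ as a finite complex atomic measure on $\mathbb{T}^d$. Writing $\S'=\sum_j c_j \delta_{u_j}$ with $c_j>0$ and pairwise distinct atoms $u_j\in\mathbb{T}^d$, the finiteness $\S'(\mathbb{T}^d)<\infty$ (from \eqref{eq:integr-SF-general}) together with $\varphi\in L^2(\S')$ yields via Cauchy--Schwarz that $\sum_j c_j|\varphi(u_j)|\leq \S'(\mathbb{T}^d)^{1/2}\|\varphi\|_{L^2(\S')}<\infty$. Setting $a_j:=c_j\varphi(u_j)$, the measure $\mu:=\varphi\S'=\sum_j a_j\delta_{u_j}$ is a finite complex Borel measure with absolutely summable weights, and its Fourier coefficients $\hat\mu(k)=\sum_j a_j u_j^k$ form a bounded function on $\mathbb{Z}^d$. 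The spectrum hypothesis amounts to $\hat\mu(k)=0$ for every $k\in\mathbb{Z}_+^d$, and the goal is to conclude $\mu=0$.

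For the base case $d=1$, $\mu$ is a finite purely atomic complex measure on $\mathbb{T}$ all of whose positive Fourier coefficients vanish. The F.\ and M.\ Riesz theorem then forces $\mu\ll d\theta$, and since a finite measure that is simultaneously purely atomic and absolutely continuous must be zero, we obtain $\mu=0$.

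For the inductive step from $d-1$ to $d$, split $u_j=(v_j,w_j)\in\mathbb{T}\times\mathbb{T}^{d-1}$. For each fixed $\k'\in\mathbb{Z}_+^{d-1}$, the measure
\[
\nu_{\k'}:=\sum_j a_j w_j^{\k'}\delta_{v_j}
\]
is a finite atomic measure on $\mathbb{T}$ (its total variation being dominated by $\sum_j|a_j|<\infty$) whose $k_1$-th Fourier coefficient equals $\hat\mu(k_1,\k')$ and hence vanishes for every $k_1\ge 1$. The base case applied to $\nu_{\k'}$ yields $\sum_{j:v_j=v} a_j w_j^{\k'}=0$ for every $v\in\mathbb{T}$ and every $\k'\in\mathbb{Z}_+^{d-1}$. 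Fixing such a first-coordinate value $v$, the slice $\mu_v:=\sum_{j:v_j=v}a_j\delta_{w_j}$ on $\mathbb{T}^{d-1}$ can be written $\tilde\varphi\,\tilde{\S}'$ with $\tilde{\S}':=\sum_{j:v_j=v}c_j\delta_{w_j}$ a finite purely atomic measure on $\mathbb{T}^{d-1}$ (the $w_j$'s in the slice being distinct since the $u_j$'s are) and $\tilde\varphi(w_j):=\varphi(u_j)\in L^2(\tilde{\S}')$, its norm being controlled by $\|\varphi\|_{L^2(\S')}$. Its Fourier transform vanishes on $\mathbb{Z}_+^{d-1}$ by the previous step, so the induction hypothesis gives $\mu_v=0$, whence $a_j=0$ for every $j$ with $v_j=v$, and finally $\mu=0$. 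The only place requiring some care is this last bookkeeping step in the slicing; otherwise the argument is a clean iterated use of the one-dimensional F.\ and M.\ Riesz theorem.
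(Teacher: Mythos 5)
Your proof is correct, but it follows a genuinely different route from the paper's. Both start the same way: Cauchy--Schwarz turns $\varphi\S'$ into a finite complex atomic measure $\mu=\sum_j a_j\delta_{u_j}$ with $\sum_j|a_j|<\infty$ and Fourier coefficients vanishing on the positive orthant. The paper then works in all dimensions at once: it forms the polydisc Cauchy transform $F(z)=\int\prod_i(1-z_iu_i)^{-1}\,d\mu(u)$, shows $F\equiv 0$ by expanding into the vanishing coefficients, and recovers each weight as a radial boundary limit of $(1-z_1u_1^k)\cdots(1-z_du_d^k)F(z)$ via dominated convergence. You instead induct on the dimension, invoking the one-dimensional F.\ and M.\ Riesz theorem for the base case (an absolutely continuous purely atomic measure is zero) and slicing in the frequency variables for the inductive step; your bookkeeping of atoms of $\mu$ sharing a first coordinate, which merge into a single atom of $\nu_{k'}$, is handled correctly. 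Your route is shorter and modular, and it only uses coefficients indexed by strictly positive multi-indices, whereas the paper's expansion of $F$ uses all multi-indices with non-negative entries, so you prove a formally slightly stronger statement; the paper's route is self-contained, and its $d=1$ radial-limit argument is essentially a hands-on proof of the atomic case of F.\ and M.\ Riesz that could serve as your base case if you wished to avoid the citation. One cosmetic point: F.\ and M.\ Riesz is usually stated for vanishing of all negative coefficients, so you should note that vanishing for $n\geqslant 1$ is the reflected (or conjugated) form of that hypothesis.
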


\begin{remark}
Lemma  \ref{lm:no-density-torus} is the main ingredient, it is really about polynomial approximation. It implies that any $ \varphi\in L^{ 2}(\S') $  is approximable by polynomials in $ L^{ 2}(\S').$ We state it on $ (\mathbb{Z} _{  + }^{ d})^{ c}$ instead of $ (\mathbb{Z} _{ -}^{ d})^{ c}$ for some notational simplification.
\end{remark}

\begin{proof}[Proof of Theorem  \ref{thm:quasi}]
Let us first prove  the result for the cone $  \mathbb{R}_{  + }^{ d}$. 
Combining Lemmata \ref{lm:discrete-quasi} and \ref{lm:no-density-torus} gives with Theorem  \ref{thm:general-rigidity} that $ \M_{ t }$ is interpolable from the orthant, i.e. rigid on $ (\mathbb{Z} _{  + }^{ d})^{ c}.$

 Now let $A = (\mathbb{R}_{ + }^{ d})^{ c}, C\subset A$ bounded. Local square integrability yields that $ \M(C)$ is $ L^{ 2}$-approximable by finite linear  combinations of the $ \M(C_{ \k,t }),\k\in \mathbb{Z} ^{ d}\cap A,t >0$. For each $ \k\in \mathbb{Z} ^{ d}\cap A,t >0$, $ \M  (C_{ \k,t })  $ is a $ L^{ 2}$ linear statistic of $ \M_{ t }$, hence by what we just showed, it is $ L^{ 2}$-approximable by linear statistics $ \M_{ t }( f)$ for $ f$ supported by $ \mathbb{Z} _{  + }^{ d}$, and such variables can also be viewed as linear statistics of $ \M$ supported by $ \mathbb{R}_{  + }^{ d}$.  We therefore showed that through successive approximations, $ \M(C)$ is approximable by linear statistics supported by $ \mathbb{R}_{  + }^{ d}$, which concludes the first part.
 
 Let us now give the proof for a cone $ B$ which is not necessarily $ \mathbb{R}_{  + }^{ d}.$ As a cone with non-empty interior, there is an invertible linear mapping $   \mathsf L:\mathbb{R}^{ d} \to \mathbb{R}^{ d}$ such that $   \mathbb{R}_{  + }^{ d}\subset   \mathsf L(B)$. Let $   \mathsf \M_{   \mathsf L} = \M(   \mathsf L^{ -1}\cdot )$ the pushforward measure, satisfying $ \M_{   \mathsf L}(h) = \M(h(   \mathsf L^{ -1}\cdot ))$ for $ h\in \mathscr C_{c}^{ b}(\mathbb{R}^{ d})$. The spectral measure of $   \M_{    \mathsf L}$ is also purely atomic, hence it is interpolable from $ \mathbb{R}_{  + }^{ d}$.  Therefore $ \M =   (  \M_{   \mathsf L})_{   \mathsf L^{ -1}}$ is interpolable from $   \mathsf L^{ -1} (\mathbb{R}_{  + }^{ d}) \subset B.$

\end{proof}

\begin{remark}
We in fact prove a stronger form of maximal rigidity: the quasicrystal is ``discretely'' MR, in the sense that each  $ t -$discretisation $ \M_{t }$ is interpolable from $ \mathbb{Z} _{  + }^{ d},t >0$. Surprisingly, it also holds for $ t $ ``large''. Continuous MR does not imply discrete MR in general, as we see that a deep zero of $ \S$ does not imply that each $ \S_{t }$ has a deep zero. Stealthy measures at section \ref{sec:stealthy} are in general not discretely MR.
\end{remark}

\begin{remark}
A line of results triggered by a conjecture of Lagarias  \cite{Lagarias} explores whether  a (uniformly discrete) quasicrystal  is a union of periodic lattice combs, as in Example \ref{ex:combs}. See in particular the  result by Lev and Olevskii   \cite{LevOlev}, showing that the answer is positive if the spectrum is locally finite, and the negative example of Kurasov and Sarnak  \cite{KurSarnak}, and  references therein.

We consider in this work random stationary models, but many deterministic quasicrystals can be turned into a stationary model by exploring the orbit ( \cite{HartBjo}). In this respect, the above result of interpolation from convex cones applies to such models a.s. We also develop at Section \ref{sec:strong-rigid} below the concept of strong rigidity: a quasicrystal for which a stronger approximability result than Lemma  \ref{lm:no-density-torus} holds would be periodic, see
Propositions \ref{prop:strong-rigid},  \ref{prop:periodic}. 

\end{remark}

Regarding interpolability, the next question is therefore what is the smallest (class of sets) $ B\subset \mathbb{R}^{ d}$ such that a random measure with atomic spectrum can always be determined from its restriction to $ B$.  We can already discard the case of an infinite constant-width band  by taking the union of infinitely many shifted lattices with increasing mesh sizes:

\begin{example}[Dirac combs]
\label{ex:combs}
 Let $ a_{i}>0,i\geqslant 1$ with $ \sum_{i}a_{i}^{-d}<\infty $ and $  U_{i},i\geqslant 1$, iid uniform variables in $ [0,1]^{ d}$. Define the random stationary measure $ \M=\sum_{i}\M_{i} $ with
\begin{align*}
\M _{i} = \sum_{\k\in \mathbb{Z}^{ d} }\delta _{a_{i}(U_{i} + \k)}.
\end{align*}

The intensity is $ \sum_{i}a_{i}^{-d}$ and $ \M$ is locally square integrable.
The spectral measure is indeed atomic as the Poisson summation formula easily yields that the spectral measure of $ \M_{i}$ is $ \S_{ i} = \sum_{\k\in \mathbb{Z} ^{d}  \setminus \{0\}}a_{i}^{-d}\delta _{a_{i}^{-1}\k}$.
Then for $ c>0$, on   $ B = [0,c]\times  \mathbb R ,$ it is impossible to infer the value of the $ U_{i}$ for $ i$ such that $ \M_{i} (B) =0  $, and this occurs a.s. for infinitely many $ {i}$, therefore $ \M$ is not perfectly interpolable from $ B.$  
\end{example} 

The previous example is not a quasicrystal as it is not uniformly discrete (one can find pair of points at an arbitrarily small positive distance).
It leaves us with the following questions:

 \begin{question}
Let $ \varphi :\mathbb{R}_{ + }\to \mathbb{R}_{ + }$ such that $ \varphi  (x)\to \infty $ as $ x\to \infty $, with $ \varphi (x) = o(x).$ Is a random stationary measure with atomic spectrum always perfectly interpolable from 
\begin{align*}
B= \{(x,t):0\leqslant t\leqslant \varphi (x)\}?
\end{align*}
\end{question}

\subsubsection{Proof of Lemma  \ref{lm:discrete-quasi}}  

We compute $  \textrm{Var}\left(\M_{ t }(f)\right)$ for $ f:\mathbb{Z} ^{d}\to  \mathbb C $ with bounded support. Let
\begin{align*}
f_{t }(x)=\sum_{\k\in \mathbb{Z} ^{ d}}1\{ x\in C_{\k,t }\} f(\k),x\in \mathbb{R}^{ d}.
\end{align*}
We have a.s. $ \M_{t }(f)=\M(f_{t })$, hence by   \eqref{eq:phase}\begin{align*}
 \textrm{Var}\left(\M_{t }(f)\right)=&  \textrm{Var}\left(\M(f_{ t })\right)\\=&(2\pi )^{ -d}\int_{\mathbb{R}^{d}} | \widehat{ f_{t }}(v) | ^{2}\S(dv).
\end{align*}
Define 
\begin{align}
\label{eq:def-J}
 J(s) = \int_{[0,1]^{ d}}e^{ -is\cdot w}dw =  \prod_{l = 1}^{ d}\sinc(  s_{ l}),s = (s_{ 1},\dots ,s_{ d})\in \mathbb{R}^{ d}.
\end{align}
Then, for $ s\in  \mathbb  R  ^{ d}$
\begin{align*}
\widehat{ f_{t }}(s)=&\int_{\mathbb{R}^{ d}} f_{ t }(w)e^{ -i s\cdot w}dw = \sum_{\k\in \mathbb{Z} ^{ d}}f(\k)e^{- is t\cdot \k} t ^{ d}\int_{[0,1]^{ d}}e^{-is t \cdot w}dw= t ^{ d}\hat f(e^{ its })J(t s ) 
\end{align*}
recalling that $ \hat f$ is defined on $  \mathbb  T  ^{ d}$.  Since $ \S$ is atomic locally finite, write $ \S = \sum_{j}a_{ j}\delta _{ s ^{ j}}$ with $ a_{ j}>0,s^{ j}\in \mathbb{R}^{ d}$. We have
\begin{align*}
 \textrm{Var}\left(\M_{t }(f)\right)=&(2\pi )^{ -d}
 \int_{\mathbb{R}^{d}} | \hat f(e^{ i ts}) | ^{2}t ^{2d} | J({ t s}) | ^{2}\S(ds )\\
  = &t ^{ 2d}(2\pi )^{ -d}\sum_{j}a_{ j} | \hat f(e^{ it s ^{ j}  }) J({ t s ^{ j}}) | ^{ 2} 
\end{align*}
whch means by  \eqref{eq:phase}  that $\M_{ t }$ has the spectral measure
\begin{align}
 \label{eq:S-eps}
 \S_{t } =  t ^{2d}\sum_{j}a_{ j} | J( { t s^{ j}}) | ^{ 2}\delta _{ e^{i t s ^{ j}}}.
\end{align}
 It is indeed also purely atomic. It is finite because it is the spectral measure of the $ L^{ 2}_{ \text{\rm{loc}}}$ measure $ \M_{ t}$, i.e. $ \S_{ t }(  \mathbb  T  ^{ d}) =  \textrm{Var}\left(\M_{ t }(0)\right)$.

\subsubsection{Proof of Lemma \ref{lm:no-density-torus}}

This lemma likely exists in the literature, but we could not locate precisely this multi-dimensional version  \draft{ check Rudin, Function Theory in Polydiscs, Chap.?1-2.
The same argument extends: analytic polynomials in $(z_1,\dots,z_d)$ are dense in $(L^2(\mu))$ for any discrete measure $(\mu)$ on $(T^d)$.
The proof uses the Stone?Weierstrass theorem on the compact set $({x_j}\subset T^d)$.
}.

Let $ \S' = \sum_{j}a_{ j}\delta _{ u ^{ j}}$ a purely atomic non-negative finite measure on $  \mathbb  T  ^{ d}$ where the $ u^{ j}$ are distinct.  We write $ u^{ j} = (u^{ j}_{ 1},\dots ,u^{ j}_{ d})$ where $ u^{ j}_{ k} = e^{ it^{ j}_{ k}}$   for some $ t^{ j}_{ k} \in [0,2\pi ).$  Let $ \varphi \in L^{ 2}(\S')$ whose spectrum is contained in $( \mathbb{Z} _{  + }^{ d})^{ c}$, i.e. such that
\begin{align}
\label{eq:ortho-poly}
 \int_{  \mathbb  T  ^{ d}}\varphi (u)u^{ \m}\S'(du) = 0, \m\in \mathbb{Z}_{  + } ^{ d} .
\end{align} 
We must prove $ \varphi \equiv 0.$
 Define $ c_{ j} = \varphi (u^{ j})a_{ j}$ and  the complex-valued measure
\begin{align*}
 \mu  = \sum_{j}c_{ j}\delta _{ u^{ j}}
\end{align*}which is finite by Cauchy-Schwarz:
\begin{align*}
 \sum_{j} | c_{ j} |  = \sum_{j} | \varphi (j) | a_{ j}\leqslant \sqrt{\sum_{j}a_{ j}\sum_{j} | \varphi (u^{ j}) | ^{ 2}a_{ j}} = \sqrt{\S'(  \mathbb  T  ^{ d})}\|\varphi \|_{ L^{ 2}(\S')}<\infty .
\end{align*}
Also, by  \eqref{eq:ortho-poly}, $ \mu $ has vanishing Fourier coefficients with positive coordinates. 
 Let us prove that it implies $ \mu \equiv 0.$
Define $  \mathbb D_{ r} = \{z_{ 1}\in  \mathbb  C  : |  z_{ 1} | <r\},r\in (0,1]$ and $  \mathbb D =  \mathbb D_{ 1}$. Recall the series expansion
\begin{align*}
 \frac{ 1}{1-z} = \sum_{n\geqslant 0}z^{ n},z\in  \mathbb D.
\end{align*}
 We introduce  for $ z = (z_{ 1},\dots ,z_{ d})\in   \mathbb D^{ d}$   
\begin{align*}
 F(z_{ 1},\dots ,z_{ d}) = &\int_{  \mathbb  T  ^{ d}}\frac{ \mu (du)}{(1-z_{ 1} {u_{ 1}})\dots (1-z_{ d} { u_{ d}})}\\
  = &\sum_{j}c_{ j}\frac{ 1}{(1-z_{ 1}{ u^{ j}_{ 1}})\dots (1-z_{ d}{ u^{ j}_{ d}})}\\
  =& \sum_{j}c_{ j}\sum_{n_{ 1},\dots ,n_{ d}\geqslant 0}z_{ 1}^{ n_{ 1}} (u_{ 1}^{ j})^{ n_{ 1}}\dots  z_{ d}^{ n_{ d}} (u_{ d}^{ j})^{ n_{ d}}.
\end{align*}
For $ r<1$, on the domain $   \mathbb D_{ r}^{ d}$, we can switch all summations because 
\begin{align*}
 \sum_{j} | c_{ j} | \sum_{n_{ 1}\geqslant 0}r^{ n_{ 1} }\dots \sum_{n_{ d}\geqslant 0}r^{ n_{ d}}<\infty .
\end{align*}
Therefore, for $ z\in  \mathbb D^{ d}$
\begin{align*}
 F(z) = \sum_{n_{ 1},\dots ,n_{ d}\geqslant 0}(\sum_{j}c_{ j} (u_{ 1}^{ j})^{ n_{ 1}}\dots (u_{ d}^{ j})^{ n_{ d}})z_{ 1}^{ n_{ 1}}\dots z_{ d}^{ n_{ d}} = \sum_{\m\in \mathbb{Z} _{  + }^{ d}}z^{ \m}\int_{}u^{ \m}d\mu 
\end{align*}
and the internal integral is a positive Fourier coefficient of $ \mu $, it vanishes by assumption. Hence $ F\equiv 0$ on the domain $  \mathbb D^{ d}$.
To formally show that it implies that each $ c_{ k} = 0$, we must study the continuity at the boundary. Define  on $  \mathbb D^{ d}$
\begin{align*}
 G_{ k}(z_{ 1},\dots ,z_{ d}) = &(1-z_{ 1}u_{ 1}^{ k})\dots (1-z_{ d}u_{ d}^{ k})F(z_{ 1},\dots ,z_{ d})\\
  =& c_{ k} + \sum_{j\neq k}c_{ j}\frac{ (1-z_{ 1}u_{ 1}^{ k})\dots (1-z_{ d}u_{ d}^{ k})}{(1-z_{ 1}u_{ 1}^{ j})\dots (1-z_{ d}u_{ d}^{ j})},
\end{align*}
this function clearly also vanishes on $  \mathbb D.$ Since on the torus $ u_{ i}^{ j}  {  \bar  u_{ i}^{ j}}  = 1$, each summand of order $ j\neq k$ vanishes when $ z = (z_{ 1},\dots ,z_{ d})\to   \overline{   u^{ k}} = (  \overline{ u_{ 1}^{ k}} ,\dots ,  \overline{ u_{ d}^{ k}} ).$ We would like to show 
\begin{align*}
 c_{ k} = \lim_{ z\in  \mathbb D^{ d},z\to  \bar u^{ k}}G_{ k}(z_{ 1},\dots ,z_{ d}) = 0.
\end{align*}
It suffices to study radial limits with 
\begin{align*}
 g_{ k}(r_{ 1},\dots ,r_{ d}): = G_{ k}(r_{ 1}  \bar u_{ 1}^{ k},\dots ,r_{ d}  \bar u_{ d}^{ k}) = c_{ k} + \sum_{j\neq k}c_{ j}\frac{ (1-r_{ 1})\dots (1-r_{ d})}{(1-r_{ 1}  \bar u_{ 1}^{ k} u_{ 1}^{ j})\dots (1-r_{ d}   \bar u_{ d}^{ k} u_{ d}^{ j}))}.
\end{align*}
For $ r<1$, the point of the form $ re^{ i\theta }$ the closest to $ 1$ is $ r$, hence 
\begin{align*}
  | 1-r   \bar u_{ i}^{ k} u_{ i}^{ j}|  =  | 1-re^{ i(t_{ i}^{ j}-t_{ i}^{ k})} | > | 1-r | ,
\end{align*}
and each summand is dominated by  $
 |  c_{ j} | \times 1 $,
recall that the $ c_{ j}$ are summable. By Lebesgue's theorem, this concludes the proof that $ \mu  = 0$, hence $ \varphi \equiv 0.$

\section{Strong rigidity and periodicity}
\label{sec:periodicity}

\subsection{Periodicity of discrete fields}
  
Borichev, Nishry and Sodin \cite{BNS}  show that an  integer-valued time series $ \X$  with a spectral gap  is necessarily periodic. This result was later refined in \cite{BSW18} using variants of Szeg\"o's theorem. Both these works are uni-dimensional, and \cite{BSW18} mentions that  {\it an intriguing question is to extend their main results to stationary processes on $ \mathbb{Z} ^{d}$ with $ d\geqslant 2$.} By developing a connection with the question of interpolability from 
 convex cones, we shall prove here the following generalisation: an integer valued   stationary random field $ \X$ is periodic if its spectrum is simply connected. Recall that a path-connected subset $ S\subset  \mathbb  T  ^{ d}$ is simply connected if any  loop, i.e. $  \mathbb  T  ^{ d}$-continuous $ \gamma :[0,1]\to S$ with $ \gamma (0) = \gamma (1)$, can be continuously deformed into a point within $ S$, i.e. there exists a continuous mapping $ T :[0,1]^{ 2} \to S$ such that $ T(0,\cdot ) = \gamma ,T(a,0) = T(a,1)$  and $ \# T(1,[0,1]) = 1.$\\

 We emphasize that in this section, contrary to the rest of the paper, random fields $ \X = \{\X_{\m},\m\in \mathbb{Z} ^{d}\}$ are strongly stationary, i.e. $ \tau _{\m}\X$ and $ \X$ have the same law for any $ \m\in \mathbb{Z} ^{d}.$ In this case, say that $ X$ is  {\it ergodic} if 
$ \mathbf P (\X\in \Omega )\in \{0,1\}$ for any event $ \Omega $ invariant under translation, i.e. such that for every $ \m\in \mathbb{Z} ^{ d}$, $ 
 \Omega  = \{\tau _{ m}\omega :\omega \in \Omega \}.$

Say that a deterministic sequence $ x_{\m};\m\in \mathbb{Z} ^{d}$ is  $ N$-periodic  for some $ N\in \mathbb{Z} _{  + }^{d}$  if for all $\m\in \mathbb{Z}^{d} ,x_{\m + N} = x_{\m}$. By abuse of notation, for $ N\in \mathbb{Z} _{  + }$, say that it is $ N$-periodic if it is $ (N,\dots ,N)$-periodic. Call  {\it uniformly discrete}  a set $  \U\subset  \mathbb C $  such that 
\begin{align*}
\delta _\U: = \inf_{z,z'\in  \U,z\neq z'} | z-z' | >0.
\end{align*}

\begin{theorem}
\label{thm:BSW-d}
 Let $ \X$ a strongly stationary field on $ \mathbb{Z} ^{d}$ taking values in a uniformly discrete set $  \U$. Assume the support of its spectral measure $ \S$ is contained in a simply connected set  of $  \mathbb  T  ^{ d}$. Then $ \X$ is a.s. periodic, with a possibly random period. In particular, $ \S$ is a purely atomic measure.

If $ \X$ is furthermore assumed to be ergodic, it is a.s. $ N$-periodic for a deterministic period $ N\in \mathbb{N}$, and $ \S$'s support is finite.
 \end{theorem}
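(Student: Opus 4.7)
The plan is to combine Stolzenberg's theorem on polynomially convex sets with the uniform discreteness of $\U$, upgrading the $L^2$ interpolability of Theorem~\ref{thm:general-rigidity} into an almost-sure deterministic recovery of $\X$ from an orthant, and then follow the strategy of \cite{BNS,BSW18} to conclude periodicity. Let $K \subset \mathbb{T}^d$ denote a compact simply connected set containing $\mathrm{supp}(\S)$. Viewing $\mathbb{T}^d$ inside $\mathbb{C}^d$, Stolzenberg's theorem guarantees that $K$ is polynomially convex and that the algebra of analytic polynomials $\mathbb{C}[u_1,\dots,u_d]$ is uniformly dense in $C(K)$, hence dense in $L^2(\S)$. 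Via Theorem~\ref{thm:general-rigidity} this density translates into LMR of $\X$ on $(\mathbb{Z}_+^d)^c$: every linear statistic $\X(f)$ with $f$ supported outside $\mathbb{Z}_+^d$ is an $L^2$ limit of combinations of $\{\X_\m : \m \in \mathbb{Z}_+^d\}$. By the symmetry $\S(-\cdot)=\S(\cdot)$ (exchanging analytic and antianalytic monomials) and shift invariance, the same holds with respect to any affine orthant.

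Since $\X$ is $\U$-valued with $\delta_\U > 0$, any $L^2$-limit of random variables that is a.s.\ $\U$-valued must be measurable with respect to the $\sigma$-algebra generated by the approximants (after extracting an almost-sure subsequence, the $\U$-valued limit is a pointwise limit of $\sigma$-measurable quantities). Hence each $\X_\m$, $\m \in (\mathbb{Z}_+^d)^c$, is a.s.\ a Borel function of $\X|_{\mathbb{Z}_+^d}$, and by the above symmetries analogous Borel functions exist relative to any affine orthant.

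To pass from orthant determinism to periodicity, I would adapt the argument of \cite{BNS,BSW18}: the key point is that Stolzenberg's theorem produces polynomial approximants with finitely many monomials which, together with uniform discreteness, yields a deterministic \emph{finite-memory} recursion for $\X$; a pigeonhole/Poincaré-recurrence argument on the finitely many admissible local patterns then forces periodicity almost surely, with some (possibly random) period lattice $\Lambda \subset \mathbb{Z}^d$. The spectrum of a $\Lambda$-periodic weakly stationary field is automatically purely atomic and supported on the dual group $(\mathbb{Z}^d/\Lambda)^{\star} \subset \mathbb{T}^d$, which yields the atomicity statement. Under ergodicity, $\Lambda$ is shift-invariant hence deterministic, say $\Lambda = N_1\mathbb{Z} \times \cdots \times N_d\mathbb{Z}$ for some $N \in \mathbb{Z}_+^d$, and the support of $\S$ is then the finite dual group, of cardinality $N_1\cdots N_d$.

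The main obstacle will be the passage from ``deterministic dependence on an orthant'' to ``periodicity'': this is where the specifically polynomial (rather than merely $L^2$) character of Stolzenberg's approximation is essential, in order to reduce the a priori infinite-memory determination given by the Borel functions to finite-memory; one must also use shift invariance carefully to rule out non-periodic, almost-automorphic $\U$-valued configurations which are a priori consistent with orthant-determinism.
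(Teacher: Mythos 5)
Your high-level architecture (Stolzenberg $\Rightarrow$ polynomial approximation on the spectrum $\Rightarrow$ determinism from orthants $\Rightarrow$ periodicity \`a la Borichev--Sodin--Weiss) matches the paper's, but the two steps you flag as ``to be adapted'' are precisely where the content lies, and as written they do not go through. First, mere LMR from the orthant (i.e.\ $e_n(\S)\to 0$) is not enough, and your claim of a ``deterministic finite-memory recursion'' is not correct: for any fixed window size $n$ the best linear predictor of $\X_0$ only recovers $\X_0$ up to an event of probability $O(e_n(\S))$ (via Chebyshev and $\delta_{\U}>0$), so the recursion is probabilistic, and reconstructing the whole shell $D_n=C_{n+1}\setminus C_n$ costs a union bound of order $n^{d-1}e_{[n/2]-1}(\S)$. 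The paper therefore needs \emph{strong} interpolability, $\sum_n n^{d-1}e_n(\S)<\infty$, to run Borel--Cantelli and conclude that a.s.\ all but finitely many shells are correctly predicted. This summability is obtained by a specific device you do not supply: Oka--Weil/Stolzenberg gives a single polynomial $Q$ with $\|1-Q\|_{\mathrm{supp}(\S)}\leqslant 1/2$, and taking powers $Q^n$ yields $e_{kn}(\S)\leqslant 4^{-n}\S(\mathbb T^d)$, i.e.\ geometric decay. Without this quantitative input, your ``Borel function of $\X|_{\mathbb{Z}_+^d}$'' step gives measurability but no route to periodicity.

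Second, your proposed endgame (pigeonhole/Poincar\'e recurrence on finitely many local patterns) cannot work on its own: Sturmian-type configurations have finitely many patterns of each size yet are aperiodic, and you acknowledge but do not resolve this. The paper's actual argument is different: once a.s.\ $\X=\psi_{N_0}(\X_{C_{N_0}})$ for a random finite $N_0$, the law of $\X$ is supported on a \emph{countable} set $\Omega$ of configurations; partitioning $\Omega$ into translation orbits, stationarity forces all configurations in an orbit to have equal (positive) probability, hence each orbit is finite, and a configuration with a finite translation orbit is periodic. This equal-probability-on-countable-orbits argument is the mechanism that excludes almost-automorphic configurations, and it is missing from your proposal. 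The atomicity of $\S$ and the ergodic refinement then follow as you indicate, but only once periodicity is actually established.
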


The proof is at Section  \ref{sec:prf-periodic}.
 This theorem hence shows that there is a strong constraint on the possible shapes for the spectra of random fields taking values in a uniformly discrete set. 
The fundamental concept behind the scenes is that of polynomial approximation on $ \S$'s support.
For notational simplification we consider here the positive orthant $ \Q =  \mathbb{Z} _{  + }^{ d}$.
For $ B\subset \Q$, let $  \mathscr  P_{ B}$ be the class of polynomials of the form
\begin{align*}
 P(z) = \sum_{\m\in B}h_{ \m}z^{ \m},z\in  \mathbb C ^{ d}.
\end{align*}
Say that $ \hat B\subset  \mathbb  T  ^{ d}$ is  {\it approximable} if its indicator function is uniformly approximable by   polynomials of  $  \mathscr  P_{ \Q}$, i.e.\begin{align*}
 \inf_{ P\in  \mathscr  P_{ \Q}}\|1-P  
 \|_{ \hat B}= 0
\end{align*} 
where $ \|Q\|_{ \hat B} = \sup_{ z\in \hat B} | Q(z) | .$
 Say furthermore that $ \hat B$ is isotropically approximable if for every permutation $ \sigma $ of $ \{1,\dots ,d\}$, the corresponding permutated set $ \hat B^{ \sigma }: =  \{(x_{ \sigma (1)},\dots ,x_{ \sigma (d)}):x\in \hat B\}$ is approximable.
We show below with Proposition \ref{prop:corridors} that if $ \hat B$ has straight  gaps across each coordinate, $ \hat B$ is isotropically approximable.

 The general assumption is actually that the indicator function of the support should be approximable by polynomials, because it means that its complement forms a uniqueness pair with $ \Q$, see Section  \ref{sec:strong-rigid} and Proposition  \ref{prop:strong-rigid} below. We exploit a result of Stolzenberg  \cite[(vi), p.262]{Stolzenberg} that says that simply connected sets are polynomially convex, and therefore approximable by  the Oka-Weil theorem  \draft{ ref}.
 
 Let us give a simple example of an approximable (and simply connected) subset. We provide for illustrating purposes at Section  \ref{sec:prf-periodic}     a direct proof of approximability not requiring  \cite{Stolzenberg}.
 \begin{proposition}\label{prop:corridors}
 Let $  I_{ 1},\dots ,I_{ d}$  non-empty open sets of $\mathbb  T  $, then 
\begin{align*}
 \hat B: = ( \mathbb  T   \setminus I_{ 1})\times \dots \times ( \mathbb  T   \setminus I_{ d})
\end{align*}is simply connected and isotropically approximable.
 \end{proposition}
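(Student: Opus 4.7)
The plan is to dispatch simple connectedness directly from the product structure, and to prove approximability by tensorising a one-dimensional construction that avoids invoking Stolzenberg's theorem.

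Simple connectedness is immediate: each factor $\mathbb{T} \setminus I_i$ is a closed proper arc of the circle, hence homeomorphic to a closed interval and therefore contractible. A finite product of contractible spaces is contractible, so $\hat B$ is contractible and in particular simply connected.

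For approximability I tensorise. Suppose that, for each $i$, I have produced a polynomial $P_i(z_i) = \sum_{n \geq 0} h^{(i)}_n z_i^n$ with $\|1 - P_i\|_{\mathbb{T} \setminus I_i} < \varepsilon$. The product $P(z) := \prod_i P_i(z_i)$ lies in $\mathscr P_{\Q}$, and the telescoping identity
\begin{equation*}
\prod_{i=1}^d P_i - 1 = \sum_{i=1}^d \Bigl(\prod_{j<i} P_j\Bigr)(P_i - 1)
\end{equation*}
combined with $|P_j| \leq 1 + \varepsilon$ on $\mathbb{T} \setminus I_j$ yields $\|1 - P\|_{\hat B} \leq d(1+\varepsilon)^{d-1}\varepsilon$. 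Since $\hat B^\sigma = \prod_i (\mathbb{T} \setminus I_{\sigma(i)})$ has the same product shape as $\hat B$, isotropic approximability will follow from the case $d = 1$ applied to each permuted product.

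For the one-dimensional case, rotate so that $I := I_i$ is a neighbourhood of $1 \in \mathbb{T}$, so that $|1-z| \geq c > 0$ on $A := \mathbb{T} \setminus I$. For a parameter $r > 1$, consider
\begin{equation*}
P_n^{(r)}(z) := (1-z)\sum_{k=0}^{n} r^{-k} z^k,
\end{equation*}
a polynomial with nonnegative exponents. Since $|z/r|<1$ on $|z| \leq 1$, letting $n \to \infty$ gives uniform convergence on the closed unit disc to $(1-z)/(1-z/r) = r(1-z)/(r-z)$. A short algebraic computation then shows $|r(1-z)/(r-z) - 1| \leq (r-1)/(c-(r-1))$ on $A$, which tends to $0$ as $r \to 1^+$ thanks to the lower bound $|1-z| \geq c$. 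Choosing $r$ close to $1$ and then $n$ large produces polynomials in $\mathscr P_{\mathbb{Z}_+}$ approximating the constant $1$ to arbitrary accuracy on $A$, which, via the tensorisation step, concludes the proof.

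The main (and essentially only) obstacle is to manage the two successive uniform limits in $n$ and $r$; both are elementary geometric-series estimates that rely solely on the uniform lower bound $|1-z| \geq c$ on $A$. No further subtlety is anticipated.
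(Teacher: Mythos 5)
Your tensorisation step and the telescoping estimate are fine and match the paper's own argument, and your contractibility argument for simple connectedness is essentially right (with the caveat, shared with the statement itself, that if some $I_i$ is a disconnected open set then $\mathbb{T}\setminus I_i$ need not be an arc; one should first shrink each $I_i$ to an open arc it contains, which only enlarges $\hat B$). The genuine gap is in your one-dimensional construction. The polynomial $P_n^{(r)}(z)=(1-z)\sum_{k=0}^n r^{-k}z^k$ has constant term $P_n^{(r)}(0)=1$, so it does not belong to $\mathscr P_{\mathbb N^*}$: in the paper's notation $\mathbb{Z}_+=\mathbb{Z}\cap(0,\infty)$, so the class $\mathscr P_{\Q}$ appearing in the definition of approximability consists of polynomials all of whose monomials have every coordinate exponent at least $1$. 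That restriction is the entire content of the proposition. If constant terms were allowed, $P=1$ would witness approximability of any set whatsoever, no gap $I_i$ would be needed, and Theorem \ref{thm:BSW-d} would be false (an i.i.d.\ integer-valued field would be periodic). Equivalently, the quantity $e_n(\S)$ that approximability is designed to control is the error in predicting $\X_0$ linearly from $\X_\m$, $\m\in[1,n]^d$, so the predicting polynomial cannot contain the monomial $z^0$.

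The construction cannot be repaired cosmetically: $P_n^{(r)}-1$ is small on $A$ rather than close to $1$, and $zP_n^{(r)}$ approximates $z$, not $1$. What is actually needed in dimension one is a polynomial $Q(z)=z\tilde Q(z)$ with $\|1-Q\|_{K}$ small on $K=\mathbb{T}\setminus I$, i.e.\ a uniform polynomial approximation of $z\mapsto 1/z$ on $K$. This is precisely where the gap $I$ enters: $K$ is a compact set whose complement in $\mathbb{C}$ is connected (the inside and the outside of the circle communicate through $I$), and $1/z$ is holomorphic on a neighbourhood of $K$, so Runge's theorem applies --- this is the paper's one-line reduction to Runge/Weierstrass. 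A two-line geometric-series formula of the kind you propose is not available here, because the pole of $1/z$ must be pushed to infinity through the gap, which is the non-elementary step; your series only exploits $|1-z|\geqslant c$ and never uses the positivity of the exponents, which is the constraint that makes the statement non-trivial.
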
 
 
    See Table  \ref{table:1}-(d) for another example, and Table \ref{table:1}-(e) for  a counter-example.
It might be possible to replace the gap assumption by that of having a deep zero in a  sense stronger than  \eqref{eq:dez-torus}, such as in   \cite[Theorem 8]{BSW18}.

\subsubsection*{Acknowledgment} I am indebted to M. Sodin, who provided the reference  \cite{Stolzenberg},  and the connection with Oka-Weil theorem, allowing to state Theorem  \ref{thm:BSW-d} with the simply connected hypothesis.  He also pointed to many items of the literature I was unaware of.

 \subsection{Counter-example and discussion}
 
 Simple connectedness of the spectrum implies to cut the torus along each coordinate,  which means there should be a hole along each dimension, but this hole can vary with the level, see Table \ref{table:1}-(d). This condition is not necessary:
\begin{proposition}
There exists  $ S\subset  \mathbb  T  ^{ d}$ that is approximable and that is not a subset of a simply connected subset.
\end{proposition}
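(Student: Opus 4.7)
The plan is to exhibit $S$ concretely. For $d \geq 2$ I would take
\[
 S := \mathbb{T} \times \{1\}^{d-1} \subset \mathbb{T}^d,
\]
the first coordinate subcircle, and then verify the two required properties separately.

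First I would check approximability. Since $0 \in \Q$, the constant polynomial $P \equiv 1$ belongs to $\mathscr{P}_{\Q}$, and $\|1 - P\|_{S} = \sup_{z \in S}|1 - 1| = 0$, so $\inf_{P \in \mathscr{P}_{\Q}} \|1 - P\|_{S} = 0$. This step is immediate.

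The substantive part is to show that no simply connected $T \subset \mathbb{T}^d$ can contain $S$. I would argue by contradiction using $\pi_1$. The loop $\gamma : [0,1] \to S$ defined by $\gamma(t) := (e^{2\pi i t}, 1, \dots, 1)$ represents the generator $(1, 0, \dots, 0)$ of $\pi_1(\mathbb{T}^d) \cong \mathbb{Z}^d$ under the standard identification, hence is not null-homotopic in $\mathbb{T}^d$. If $T \subset \mathbb{T}^d$ were simply connected with $S \subset T$, then $\gamma$ would admit a null-homotopy inside $T$ by simple connectedness; composing with the inclusion $T \hookrightarrow \mathbb{T}^d$ would yield a null-homotopy of $\gamma$ in $\mathbb{T}^d$, contradicting the nontriviality of $[\gamma]$.

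The main subtlety is conceptual rather than technical: ``approximable'' as defined demands only uniform approximation of the constant $1$ on $S$, a requirement trivially satisfied by $P \equiv 1$. Consequently, any $S$ carrying a homotopically nontrivial loop of $\mathbb{T}^d$ settles the proposition, and the $\pi_1$-obstruction above cannot be evaded by any choice of enveloping $T$.
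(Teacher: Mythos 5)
Your first step contains a fatal error. In this paper $\mathbb{Z}_{+}=\mathbb{Z}\cap(0,\infty)$, so the exponent set $\mathbb{Z}_{+}^{d}$ contains no multi-index with a zero coordinate; in particular the constant polynomial $P\equiv 1$ does \emph{not} belong to $\mathscr P_{\mathbb{Z}_{+}^{d}}$. Approximability is precisely the Szeg\"o-type requirement that the constant $1$ be uniformly approximable on $S$ by polynomials all of whose monomials have strictly positive exponent in \emph{every} variable --- this is what makes it equivalent to one-sided prediction of $\X_{0}$ from $\{\X_{\m},\m\in\mathbb{Z}_{+}^{d}\}$, and it would be vacuous if $0$ were an allowed exponent. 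With the correct definition your set $S=\mathbb{T}\times\{1\}^{d-1}$ is \emph{not} approximable: for $P\in\mathscr P_{\mathbb{Z}_{+}^{d}}$ the restriction $P(u,1,\dots,1)=\sum_{k\geqslant 1}c_{k}u^{k}$ has zero mean on the circle, hence
\begin{align*}
\|1-P\|_{S}\;\geqslant\;\left|\frac{1}{2\pi}\int_{0}^{2\pi}\bigl(1-P(e^{i\theta},1,\dots,1)\bigr)\,d\theta\right|=1 .
\end{align*}
This is exactly the obstruction of Proposition \ref{prop:separable}: a full ``straight line'' (a whole coordinate circle) inside the spectrum destroys interpolability from the orthant. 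Your $\pi_{1}$ argument for the second half is correct but moot, since the example fails the first half.

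The substance of the proposition is that approximability --- a strong condition which by Proposition \ref{prop:strong-rigid} forces strong interpolability and hence periodicity --- can nonetheless hold for a set that is not contained in any simply connected set. The paper's construction in Section \ref{sec:counter-ex-SC} takes a connected union of arcs in $\mathbb{T}^{2}$ whose gap in the second coordinate shifts position as the first coordinate varies, so that no coordinate slice is a full circle yet the set carries a non-contractible loop; approximability is then proved by a genuine patching argument, gluing one-dimensional Runge approximations on the slices by means of an approximate partition of unity realised by a trigonometric polynomial. Any correct proof must do work of this kind rather than invoke a constant polynomial.
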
  

The proof is at Section  \ref{sec:counter-ex-SC}, along with the graphical illustration Figure  \ref{fig:counter-ex}.
 Looking back at Proposition  \ref{prop:separable}, what seems necessary is that there should not be a  {\it straight  line} in the spectrum. Is it also sufficient?
Finding a necessary and sufficient condition for support approximability is likely a  difficult question of complex analysis.

 \subsection{Strong interpolability}  
 \label{sec:strong-rigid}
 
The proof requires a
stronger form of interpolability for $ \X$, implicit in   \cite{BSW18}.  Let $ \Q_{n}:=\Q\cap [1,n]^{d}$. ``Classical'' perfect interpolability (as treated in the rest of the paper)  means that the best linear approximation of $ \X_{ 0} $ of the form $ \hat \X_{ 0}:=\X (h):=\sum_{\m\in \Q_{n}}h_{\m}\X_{ \m}$, has a vanishing error as $ n\to \infty $, i.e.    \begin{align*}
e_{n}(\S): = \inf_{h \in   \mathbb C ^{  \Q_{ n}}}\mathbb{E}( | \X _{ 0}-\X (h) | ^{2}) =   \inf_{ P\in  \mathscr  P_{ \Q _n}}\int_{  \mathbb  T  ^{ d}} | 1-P | ^{ 2}d\S\xrightarrow[ n\to \infty ]{}0.
\end{align*}  We say that  $ \X$ is  {\it strongly interpolable} from $ \Q$, if furthermore
\begin{align*}
\sum_{n}n^{d-1}e_{n}(\S)<\infty .
\end{align*}
 In comparison, for instance, Lemma \ref{lm:no-density-torus} is a result of $ L^{ 2}$-polynomial approximation.
 Therefore Theorem \ref{thm:BSW-d} is a consequence of  the two  next propositions.

\begin{proposition}
\label{prop:strong-rigid}
Assume   $  { \rm supp}(\S)    $  is  approximable. Then   $ \X$ is strongly interpolable from $  \Q$. 
\end{proposition}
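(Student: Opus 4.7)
The plan is to upgrade the qualitative approximability of $K:=\mathrm{supp}(\S)$ to \emph{geometric} decay of $e_n(\S)$ in $n$; once that amplification is achieved, the factor $n^{d-1}$ is absorbed for free, and strong interpolability follows. The only quantitative input from the approximability hypothesis is a single seed polynomial, after which the rest is a classical power-iteration argument.

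First I would use approximability to fix $P_0\in\mathscr P_\Q$ with $\|1-P_0\|_K\leqslant 1/2$, and denote by $N_0$ its maximal coordinate-degree, so that $P_0\in\mathscr P_{\Q_{N_0}}$. For each integer $k\geqslant 1$, I would then introduce
\[
R_k \;:=\; 1-(1-P_0)^k \;=\; \sum_{j=1}^{k}(-1)^{j+1}\binom{k}{j}P_0^{\,j}.
\]
Because $\Q+\Q\subset\Q$ and $\mathbf{0}\notin\Q$, each power $P_0^{\,j}$ ($j\geqslant 1$) has all its monomial indices in $\Q$; hence $R_k\in\mathscr P_\Q$, and its maximal coordinate-degree is at most $kN_0$, so in fact $R_k\in\mathscr P_{\Q_{kN_0}}$.

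Pointwise on $K$ we then have $\|1-R_k\|_K=\|(1-P_0)^k\|_K\leqslant 2^{-k}$. Since $\X$ is square-integrable, $\S$ is a finite measure on $\mathbb T^d$ (with total mass $\textrm{Var}(\X_0)$), and integrating gives
\[
e_{kN_0}(\S)\;\leqslant\;\int_{\mathbb T^d}|1-R_k|^{2}\,d\S\;\leqslant\;4^{-k}\,\S(\mathbb T^d).
\]
Because $n\mapsto e_n(\S)$ is non-increasing (as $\Q_n$ is increasing in $n$), grouping the indices $n$ in blocks $[kN_0,(k+1)N_0)$ yields
\[
\sum_{n\geqslant 1} n^{d-1}e_n(\S)\;\leqslant\;\S(\mathbb T^d)\sum_{k\geqslant 1}N_0\bigl((k+1)N_0\bigr)^{d-1}\,4^{-k}\;<\;\infty,
\]
which is strong interpolability from $\Q$.

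The only delicate point — and essentially the entire content of the proof — is verifying that $R_k$ remains in $\mathscr P_\Q$. This reduces to the closure of $\Q$ under addition (to ensure $P_0^{\,j}\in\mathscr P_\Q$) together with the absence of the constant monomial (otherwise the expansion of $(1-P_0)^k$ would produce a term at $\mathbf 0\notin\Q$), both of which are built into the definition $\Q=\mathbb Z_+^d$. Beyond that, the argument is elementary: the approximability hypothesis is invoked only once, to produce the seed $P_0$, and its amplification into exponential decay is automatic through the identity $1-R_k=(1-P_0)^k$.
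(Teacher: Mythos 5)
Your proof is correct and follows essentially the same route as the paper: take one seed polynomial $P_0$ with $\|1-P_0\|_{\mathrm{supp}(\S)}\leqslant 1/2$, amplify it via powers of $1-P_0$ to get $e_{kN_0}(\S)\leqslant 4^{-k}\S(\mathbb T^d)$, and conclude by monotonicity of $n\mapsto e_n(\S)$ and finiteness of $\S$. Your explicit verification that $R_k\in\mathscr P_{\Q}$ (closure of $\Q$ under addition and cancellation of the constant term) is a detail the paper leaves implicit, but the argument is the same.
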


By permutation of the coordinates, if $  { \rm supp}(\S)   $ is isotropically approximable, then $ \X$ is strongly interpolable from any orthant, i.e. any set obtained by finitely many translations and reflections applied to $ \Q$, such as in Proposition  \ref{prop:corridors}.

\begin{proposition}
\label{prop:periodic}
A  strongly stationary process $ \X$ taking values in a UD set which is strongly interpolable from every orthant is a.s. periodic. If $ \X$ is furthermore ergodic, the period can be chosen deterministically.
\end{proposition}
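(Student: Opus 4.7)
The plan is to upgrade the $L^{2}$ approximation provided by strong interpolability into almost-sure exact recovery of each $\X_{\m}$ from $\X$-values in a shifted orthant, using the UD structure and a Borel--Cantelli argument, and then to convert this recovery into periodicity by combining the rules obtained from the $2^{d}$ coordinate orthants.

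\textbf{Step 1 (From $L^{2}$ to a.s.\ exact recovery).} Set $\delta = \delta_{\U} > 0$. Strong interpolability from $\Q = \mathbb{Z}_{+}^{d}$ provides deterministic coefficients $(h^{(n)}_{\m})_{\m \in \Q_{n}}$ such that the linear predictor $\hat\X_{0}^{(n)} := \sum_{\m \in \Q_{n}} h^{(n)}_{\m} \X_{\m}$ satisfies $\mathbb{E}|\X_{0} - \hat\X_{0}^{(n)}|^{2} = e_{n}(\S)$, with $\sum_{n} n^{d-1} e_{n}(\S) < \infty$. By Chebyshev and stationarity, the translated predictor $\hat\X_{p}^{(n)}$ obtained by applying the same coefficients to the shifted window satisfies $\mathbf{P}(|\X_{p} - \hat\X_{p}^{(n)}| \geq \delta/2) \leq 4 e_{n}(\S)/\delta^{2}$ for each $p \in \mathbb{Z}^{d}$. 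A union bound over $p$ in a cube of side $\lfloor n^{(d-1)/d}\rfloor$ (cardinality $O(n^{d-1})$), combined with Borel--Cantelli against the summable series $\sum n^{d-1} e_{n}$, shows that almost surely, for all large $n$ and every $p$ in that growing cube, $\X_{p}$ equals the $\U$-nearest-point rounding $[\hat\X_{p}^{(n)}]_{\U}$. Hence each $\X_{p}$ is a.s.\ a deterministic measurable function of $\X|_{p+\Q_{n}}$ for $n$ large enough, and the same conclusion holds when $\Q$ is replaced by any coordinate orthant $\Q'$.

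\textbf{Step 2 (From multi-orthant recovery to periodicity).} Having a.s.\ exact recovery of $\X_{p}$ from $\X|_{p+\Q'}$ for every one of the $2^{d}$ coordinate orthants $\Q'$, together with the quantitative bound $\sum n^{d-1} e_{n}(\S) < \infty$ from each, forces the spectral measure $\S$ to be a finite atomic measure; this is a Szegő-type analysis analogous to the one-dimensional arguments of \cite{BNS,BSW18}. Writing the process then as $\X_{\m} = \sum_{j=1}^{K} A_{j} e^{it^{(j)}\cdot\m}$ with random amplitudes $A_{j}$ and deterministic frequencies $t^{(j)} \in \mathbb{T}^{d}$, the UD constraint $\X_{\m} \in \U$ rules out irrationally-rotated frequencies by Weyl equidistribution: if the additive subgroup generated by the $t^{(j)}$'s in $\mathbb{T}^{d}$ were not discrete, the trajectory would be dense in a positive-dimensional subtorus of $\mathbb{C}$, contradicting uniform discreteness of $\U$. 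Hence the $t^{(j)}$ lie on some rational lattice $\frac{2\pi}{N}\mathbb{Z}^{d}$, yielding a.s.\ $N$-periodicity with a (possibly random) $N \in \mathbb{Z}_{+}^{d}$. Under the ergodic assumption, the minimal period $N(\omega)$ is a translation-invariant $\mathbb{Z}_{+}^{d}$-valued function, hence a.s.\ constant, giving the deterministic period claim.

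\textbf{Main obstacle.} The delicate part is the inside of Step 2, namely justifying that ``$\sum n^{d-1} e_{n} < \infty$ from every orthant plus UD-valued $\X$'' forces $\S$ to have only finitely many atoms, all lying on a rational lattice. One-dimensional analogues of this implication are the technical heart of \cite{BSW18,BNS} and involve fine complex-analytic arguments around the Szegő function and Jensen-type formulas for the prediction coefficients. In dimension $d$, a natural approach is to reduce to the one-dimensional case along each coordinate axis by projection, using that strong interpolability transfers to one-dimensional sections, and then combine the one-dimensional conclusions; the combinatorial bookkeeping of the frequencies $t^{(j)}$ in $\mathbb{T}^{d}$ is where one should expect the argument to be most subtle.
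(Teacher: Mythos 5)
Your Step 1 is essentially the right first move (Chebyshev, rounding to the nearest point of $\U$, a union bound of size $O(n^{d-1})$, Borel--Cantelli), but there is a genuine gap in Step 2: you outsource the entire remaining difficulty to the unproven claim that strong interpolability from every orthant plus UD values forces $\S$ to be a \emph{finite} atomic measure supported on a rational lattice, and you acknowledge yourself that you do not know how to establish this in dimension $d$. That claim is in fact a \emph{consequence} of periodicity in the paper (and finiteness of the support only holds in the ergodic case --- in general the period is random and $\S$ may have infinitely many atoms), so your route proves the hard conclusion first in order to derive the easier one, and the finite-sum representation $\X_{\m}=\sum_{j=1}^{K}A_{j}e^{it^{(j)}\cdot\m}$ is simply not available without ergodicity.

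The missing idea is an elementary counting argument (adapted from \cite{BSW18}) that replaces all of your Step 2. First, the a.s.\ recovery must be organised so that $\X$ is determined by its restriction to a \emph{finite} box: one predicts the shell $D_{n}=C_{n+1}\setminus C_{n}$ from the box $C_{n}$, using that every point of $D_{n}$ is an outer vertex of a sub-cube of side about $n/2$ contained in $C_{n}$ --- this is exactly where interpolability from \emph{every} orthant is used, and it is not delivered by your version of Step 1, which recovers $\X_{p}$ from the infinite set $p+\Q_{n}$ pointing away from the box. Once $\X=\psi_{N_{0}}(\X_{C_{N_{0}}})$ a.s.\ for a random finite $N_{0}$, the process takes values in a countable set $\Omega$ of configurations ($\U$ is countable because it is uniformly discrete). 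Partitioning $\Omega$ into translation orbits, stationarity forces all translates of a configuration of positive probability to have the same probability, so each such orbit is finite; a configuration with a finite translation orbit is periodic. The ergodic refinement then follows because $\{\tau_{N}\X=\X\}$ is a translation-invariant event. None of this requires any spectral or complex-analytic input, which is precisely what makes the multidimensional generalisation tractable.
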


\subsection{Proofs}  
  \label{sec:prf-periodic}

\begin{proof}[Proof of Proposition \ref{prop:corridors}]
In dimension $ d = 1$, the result is a consequence of the classical Runge or Weierstrass theorems ( \cite[Th. 13.7]{Rudin}): for each $ 1\leqslant i\leqslant d$, $  \mathbb  T   \setminus I_{i}$ is approximable, i.e. 
$$  \inf_{Q_{i}\in  \mathscr  P_{\mathbb{N}^{*}}}  \|1-Q_{i}\|_{  \mathbb  T   \setminus I_{i}}=0.$$
In higher dimensions, it is a consequence of the fact that the tensor product of polynomially convex sets is polynomially convex. Let us give a proof for completeness. By isotropy, it suffices to prove that $ \hat B$ is approximable.
Remark  that it changes nothing to require that approximating polynomials are bounded by $ 2$ or another constant on $ \hat B.$ 
Then, since for $ Q_{ i}\in  \mathscr  P_{ \mathbb{N}^{ *}}$, $ Q_{ 1}\otimes \dots \otimes Q_{ d}\in  \mathscr  P_{\Q},$
\begin{align*}
 \inf_{ P\in  \mathscr  P_{\Q},\|P\|_{  \hat B}\leqslant 2^{ d} } \| 1-P  \|_{ \hat B} \leqslant &\inf_{ \stackrel{ Q_{i}\in  \mathscr  P_{\mathbb{N}^{*}},1\leqslant i\leqslant d}{\|Q_{ i}\|_{  \mathbb  T   \setminus I_{ i}}\leqslant 2 }   }\sup_{ z_{ i}\in  \mathbb  T   \setminus I_{i} } | 1-Q_{1}(z_{ 1})\dots Q_{d}(z_{ d}) | \\
  \leqslant&  \inf_{ \stackrel{ Q_{i}\in  \mathscr  P_{\mathbb{N}^{*}},1\leqslant i\leqslant d}{\|Q_{ i}\|_{  \mathbb  T   \setminus I_{ i}}\leqslant 2 }   } \sup_{ z_{ i}\in  \mathbb  T   \setminus I_{i}}\sum_{j=1}^{d}   \prod_{1\leqslant i\leqslant j} | Q_{i}(z_{i}) |   \prod_{i=j+1}^{d} | 1-Q_{i}(z_{ i}) | ^{2}\\ 
   \leqslant& 2^{ d}\inf_{ \stackrel{ Q_{i}\in  \mathscr  P_{\mathbb{N}^{*}},1\leqslant i\leqslant d}{\|Q_{ i}\|_{  \mathbb  T   \setminus I_{ i}}\leqslant 2 }   } \sup_{ z_{ i}\in  \mathbb  T   \setminus I_{i}}\sum_{I\subsetneq  \llbracket d \rrbracket}      \prod_{i\notin I} | 1-Q_{i}(z_{ i}) | ^{2}\\
  \leqslant &c_{d} \sup_{ I \subsetneq \llbracket d \rrbracket}  \prod_{i\in I}\inf_{ \stackrel{ Q_{i}\in  \mathscr  P_{\mathbb{N}^{*}} }{\|Q_{ i}\|_{  \mathbb  T   \setminus I_{ i}}\leqslant 2 }   }\sup_{ z_{ i}\in  \mathbb  T   \setminus I_{i}} | 1-Q(z_{i}) |   =0.
\end{align*}
\end{proof}

\begin{proof}[Proof of Proposition  \ref{prop:strong-rigid}]
  By the approximability assumption, there is $ k$ and a polynomial $ Q(z) =1- \sum_{\m\in \Q_{k}}h_{\m,1}z^{\m}:  \mathbb C^{d} \to  \mathbb C $ bounded by $ 1/2$ on $    {S: =  \rm supp}(\S)     $. For $ n\geqslant 1,$ let $ h_{\m,n}$ the coefficients defined by 
\begin{align*}
Q(z)^{n}=1-\sum_{\m\in \Q_ {kn}}h_{\m,n}z^{\m}.
\end{align*}    
Then we have 
\begin{align*}
e_{kn}(\S)
=&\inf_{h\in \Q_{kn}^ \mathbb C }\int_{  { \rm supp}(\S)   } | 1-\sum_{\m\in \Q _{kn}}h_{\m}u^{ \m} | ^{2}d\S(u)\\
\leqslant & \int_{  { \rm supp}(\S)   }  |  Q(u) | ^{2n}d\S(u)\\
\leqslant &4^{-n}\S(  \mathbb  T  ^{d}).
\end{align*}
 
Since $ n \mapsto e_{n}(\S)$ is non-increasing, this implies strong interpolability
\begin{align*}
\sum_{n}n^{d-1}e_{n}(\S) <\infty .
\end{align*}
\end{proof} \begin{proof}[Proof of Proposition   \ref{prop:periodic} ]

The proof relies on the following lemma, proved later. For $ X\in ( { \mathbb{Z} ^{ d}})^{  \mathbb C },B\subset \mathbb{Z} ^{ d}$, let $ X_{ B} = (X_{ \m},\m\in B).$
\begin{lemma}Denote by $ C_{n} =(\mathbb{Z} \cap [-n,n])^{d},D_{n} : = C_{n + 1}  \setminus C_{n}$. There are deterministic mappings $ \varphi _{n}: \U^{ C_{n}}\to \U^{  D_{n}} ,n\in \mathbb{N}$ such that
\begin{align*}
\mathbb{P}(\X_{D_{n}}\neq \varphi _{n}(\X_{C_{n}}))\leqslant cn^{d-1}e_{[n/2]-1}(\S).
\end{align*}
\end{lemma}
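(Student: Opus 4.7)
The plan is to construct $\varphi_n$ coordinate-by-coordinate on $D_n$: for each site $\m\in D_n$ I will predict $\X_\m$ by a linear combination of the observations $\X_\p$, $\p\in C_n$, chosen so that the error in $L^2(\mathbf P)$ is at most $\sqrt{e_{[n/2]-1}(\S)}$, and then round to the nearest value in $\U$ using the uniform discreteness.

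First, choose the right orthant for each boundary site. Set $N=[n/2]-1$. For $\m\in D_n$ we have $\|\m\|_\infty=n+1$, so for every coordinate $i$ with $m_i\ne 0$ we can set $\varepsilon_i^{(\m)}=-\mathrm{sign}(m_i)$ (and $\varepsilon_i^{(\m)}=+1$ if $m_i=0$). A trivial check of the inequality $|m_i+\varepsilon_i^{(\m)}\ell_i|\leqslant n$ for $\ell_i\in[1,N]$ shows that the translated and reflected box
\begin{align*}
 T_\m := \m+\{(\varepsilon_1^{(\m)}\ell_1,\dots,\varepsilon_d^{(\m)}\ell_d):\ell\in[1,N]^d\}
\end{align*}
is contained in $C_n$. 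Thus any linear statistic of $\X$ supported on $T_\m$ is computable from $\X_{C_n}$.

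Next, apply strong interpolability from the orthant $\Q_{\varepsilon^{(\m)}}$. By the hypothesis of Proposition \ref{prop:periodic}, there exist coefficients $(h^{(\m)}_\k)_{\k\in[1,N]^d}$ such that, setting $\hat\X_\m:=\sum_{\k\in[1,N]^d} h^{(\m)}_\k \X_{\m+\varepsilon^{(\m)}\cdot\k}$, one has $\mathbf E|\X_\m-\hat\X_\m|^2\leqslant e_N(\S)$; this is the translated/reflected version of the polynomial approximation $e_N(\S)=\inf_{P\in\mathscr P_{\Q_N}}\int|1-P|^2d\S$, and is independent of the sign vector $\varepsilon^{(\m)}$ by symmetry of the setup under coordinate reflections. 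Crucially, $\hat\X_\m$ depends only on $\X_{T_\m}\subset\X_{C_n}$.

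Now convert the $L^2$ bound to a probability bound using the hypothesis $\X\in\U$. Let $\pi:\mathbb C\to\U$ be any nearest-point projection; since $\U$ is uniformly discrete, any point $z\in\mathbb C$ with $\pi(z)\ne \X_\m$ satisfies $|z-\X_\m|\geqslant\delta_\U/2$. Define
\begin{align*}
 \varphi_n(x)_\m := \pi\!\left(\sum_{\k\in[1,N]^d} h^{(\m)}_\k\, x_{\m+\varepsilon^{(\m)}\cdot\k}\right),\quad\m\in D_n,\ x\in\U^{C_n}.
\end{align*}
By Markov's inequality, $\mathbf P(\varphi_n(\X_{C_n})_\m\ne\X_\m)\leqslant(4/\delta_\U^2)\,e_N(\S)$. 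Finally, a union bound over $\m\in D_n$ gives
\begin{align*}
 \mathbf P(\X_{D_n}\ne\varphi_n(\X_{C_n}))\leqslant |D_n|\,\tfrac{4}{\delta_\U^2}\,e_N(\S)\leqslant c\,n^{d-1}\,e_{[n/2]-1}(\S),
\end{align*}
since $|D_n|=(2n+3)^d-(2n+1)^d=O(n^{d-1})$, which is the desired bound. The only nontrivial step is the first one: once the orthant of prediction is matched to the position of $\m$ relative to the face of $D_n$ on which it lies, and once the prediction horizon $N$ is taken to be less than $n/2$, everything fits inside $C_n$ and the $L^2$ inequality plus discreteness of $\U$ finishes the argument.
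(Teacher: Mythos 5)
Your proposal is correct and follows essentially the same route as the paper: predict each site of $D_n$ by a linear statistic supported on an inward-pointing orthant-shaped box of side $[n/2]-1$ contained in $C_n$ (using strong interpolability from every orthant), round to the nearest element of $\U$, convert the $L^2$ bound to a probability bound via Chebyshev and uniform discreteness, and finish with a union bound over the $O(n^{d-1})$ sites of $D_n$. The paper phrases the geometric step in terms of "outer vertices" of sub-cubes of $C_n$ rather than sign vectors $\varepsilon^{(\m)}$, but the content is identical.
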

Borel-Cantelli lemma and the strong interpolability assumption therefore imply that there is a  random variable $ N_{0}\in \mathbb{N}$ such that a.s., for $ N\geqslant N_{0}$, $ \X_{D_{N}}=\varphi _{N}(\X_{C_{N}}).$
By recursion, $ \X$ is then completely determined by $ \X_{C_{N_{0}}}$.
More precisely,  we can define for each $ n_{ 0}\geqslant 1$ the mapping $ \psi _{n_{0}}: \U^{ C_{ n_{ 0}}}\to \U^{ \mathbb{Z} ^{ d}}$ in the following way: for $X\in \U^{ C_{ n_{ 0}}}, \m\in C_{ n_{ 0}}, \psi _{ n_{ 0}}(X)_{ \m} := X_{ \m}$, and for $ \m\notin C_{ n_{ 0}}$, 
\begin{align*}  
\psi _{ n_{ 0}}(X) _{ \m}: = \varphi _{ n}(\varphi _{ n-1}(...(\varphi _{ n_{ 0}  }(X))\dots )_{ \m}
\end{align*}
where $ n$ is such that $ \m\in C_{ n},$ so that a.s. $ \X = \psi _{ N_{ 0}}(\X_{ C_{ N_{ 0}}})$.
Before proving the lemma, let us conclude the proof by adapting the argument from  
\cite{BSW18}. Since $  \U$ is uniformly discrete, it is countable, and for each finite $ C\subset \mathbb{Z} ^{d}, {\bf	U}^{ C}$ is countable. For each $ n_{0}\in \mathbb{N}$, call $ \omega _{i}^{n_{0}},i\geqslant 1$ the elements of $  {  \U}^{ C_{ n_{ 0}}}.$ By the previous lemma,  a.s.,
\begin{align*}
\X = \psi  _{N_{0}}(\X_{C_{N_{0}}}) = \psi  _{N_{0}}(\omega _{I}^{N_{0}})
\end{align*}where $ I$ is a random integer. It means that $ \X$ takes values in the countable set 
\begin{align*}
\Omega : = \{\psi  _{n_{0}}(\omega ^{n_{0}}_{i});i,n_{0}\in \mathbb{N}\}.
\end{align*}
We partition $ \Omega $ in finite sets $ \Omega _{j},j\geqslant 1$ where, for two $ \omega ,\omega '\in \Omega $, $ \omega ,\omega '$ belong to the same $ \Omega _{j}$ for some $ j$  if and only  if  they are translates of one another, i.e. $ \omega  = \tau _{\m}\omega '$ for some $ \m\in \mathbb{Z} ^{d}$. We then have by translation invariance   
\begin{align*}
\mathbf P (\X = \omega ) = \mathbf P (\X = \omega ') .\end{align*}
Therefore, calling $ \omega _{j}$ some arbitrary representative of $ \Omega _{j},$ and $ p_{j} = \mathbf P (\X = \omega _{j}),$
\begin{align*}
\mathbf P (\X\in \Omega _{j})  =  | \Omega _{j} | \mathbf P (X = \omega _{j}) =  | \Omega _{ j} | p_{ j}.
\end{align*}
We assume without loss of generality that all $ p_{j}$ are strictly positive, and therefore the $  | \Omega _{j} | $ are finite. Since $ \Omega _{ j}$ is invariant under translations, it implies that there exists $ N_{j}\in (\mathbb{N}_{*})^{d}$ such that for all $ \omega \in \Omega _{j}, \tau _{N_{J}}(\omega ) = \omega .$
 Let $ J$ the random integer such that $ X\in \Omega _{J}$. Then $ t_{N_{J}}\X = \X$, meaning $ \X$ is $ N_{J}$-periodic, which concludes the proof. In the ergodic case, remark that for $ N\in \mathbb{N}^{d}$, $ \{t_{N}\X = \X\}$ is an event invariant under translations, therefore for all $ j$, $ \mathbf P (t_{N_{j}}\X = \X)\in \{0,1\}$, which proves that indeed $ t_{N_{j}}\X = \X$ a.s. for some deterministic $ N_{j}.$

Let us now prove the lemma.
For $ z\in  \mathbb C ,$ denote by $ [z]$ the  element of $  \U$ closest to $ z$, with ties broken in an arbitrary way. Remark that if some $ z'\in   \U$ satisfies $  | z-z' | <\delta_{ \U}/2 $, then $ z'=[z].$ 
By definition of $ e_{ n}(\S)$, there exist coefficients $ h = (h_{\m})\in  \mathbb C ^{  \Q_{n}}$, such that, with Byenaim\'e-Tchebyshev inequality,
\begin{align*}
\mathbb{P}( | \X_{ 0}-\sum_{\m\in \Q_{n} }h_{\m}\X _{ \m} | >\delta_{ \U}/2 )\leqslant 4\frac{\sup_{h\in  {\Q  _{ n}^{  \mathbb C }}}\mathbb{E}( | \X_{ 0}-\X (h) | ^{2})}{\delta _{ \U}^{2}}\leqslant c_{ \U}e_{n}(\S ).
\end{align*}
Since $ \X _{ 0}\in  \U$, we have
\begin{align}
\label{eq:approx-0}
\mathbb{P}(\X_0\neq [\X (h)])\leqslant c_{  {\bf U}}e_{n}(\S ).
\end{align}

This means that the value of $ \X$ at the origin, seen as an ``outer vertex''   of the cube $ \Q_{n}$ can be inferred from $ \X_{\Q_{n}}$ with probability at least $ 1-c_{ \U}e_{ n}(\S)$. By isotropy, the same should be true for other outer vertices  of $ \Q_n$, defined as the $ 2^{d}$ points 
\begin{align*}
V(\Q_{n}) = \{(\eta  _{i}(n + 1)^{d})_{i = 1,\dots ,d},\eta  \in \{0,1\}^{d}\}.
\end{align*}
We call cube of size $ n$ any translate  $C = \m +  \Q_{n}$ for some $ \m\in \mathbb{Z} ^{d}$, and the outer vertices are the 
\begin{align*}
V(C) = \{\m+V(\k),\k\in V(\Q_{n})\}.
\end{align*}

 By symmetry of the assumptions,  \eqref{eq:approx-0} still holds after applying isometries to $ \X$, and we can  state the following: for any cube $ C$ with sidelength $ n$, $ \k\in V(C)$, there is a mapping $ \varphi _{C,\k}:C\to  \U$ such that
\begin{align*}
\mathbf P (\X_{\k}\neq \varphi _{C,\k}(\X_{\C_{n}}))<ce_{n}(\S).
\end{align*}
The important observation to propagate values of some
 $ X\in  \U^{C_{n}}$  to any point $\k\in D_{n}$ is that there is always a cube $ C_{ n,\k}\subset C_{ n}$ of size at least $ [n/2]-1$ such that $ \k\in V(C)$. Hence we predict the value at $ \k$ through
\begin{align*}
\varphi _{n}(\X)_{ \k}  : = \varphi _{ C_{ n,\k},\k}(\X)
\end{align*}
so that indeed $ \varphi _{n}:  \U^{C_{n}}\to  \U^{D_{n}}$ satisfies
\begin{align*}
\mathbf P (\varphi _{n}(\X_{ C_{ n}})\neq \X_{D_{n}}) =&\mathbf P (  \exists  \k\in D_{n}: \varphi _{n}(\X_{ C_{ n}})_{k}\neq \X_{\k})\\
\leqslant &\sum_{\k\in D_{n}}\mathbf P (\X_{\k}\neq \varphi _{C_{ n,\k},\k}(\X_{C_{n}}))\\
\leqslant & c'| D_{n} | e_{[n/2]-1}(  \S),
\end{align*}
by \eqref{eq:approx-0},
which allows to conclude the proof.

\end{proof}

 \subsection{An approximable set which is not a subset of a simply connected set}
 \label{sec:counter-ex-SC}

Let $ I_{ a,b} = \{e^{ i\theta }; a\leqslant \theta \leqslant b\}\subset   \mathbb  T . $
Let $ \eta  = 1/100$. Then define 
\begin{itemize}
\item $   \mathbb  T  ^{ x} =  \mathbb  T   \setminus I_{ x-\eta,x + \eta  }$
\item $ J_{0} = I_{ -\pi /3,\pi /3} $
\item $ J_{ \pi } = I_{ \pi -\pi /3,\pi  + \pi /3}$
\item $ J ^{ 0,\pi } =  \mathbb  T    \setminus (J_{ 0}\cup J_{ \pi })$
\end{itemize}

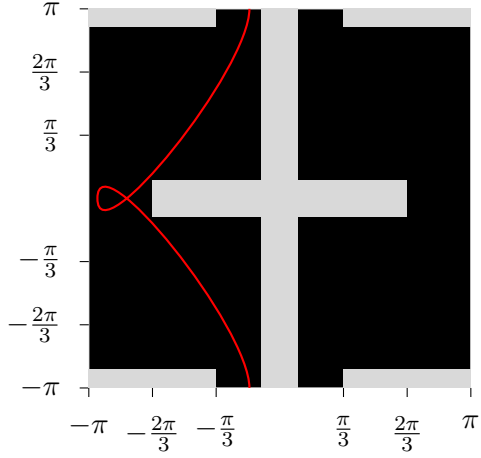
\begin{figure}[h!]
    \centering 

\begin{tikzpicture}[scale=0.8, font=\small]

\def\p{3.1416}      
\def\a{1.0472}      
\def\b{2.0944}      
\def\e{.3}
\fill[black,draw=white] (-\p,-\p) rectangle (\p,\p);


\foreach \x/\lab in {-3.1416/-\pi, -2.0944/-\tfrac{2\pi}{3}, -1.0472/-\tfrac{\pi}{3},
                      1.0472/\tfrac{\pi}{3}, 2.0944/\tfrac{2\pi}{3}, 3.1416/\pi}
  \draw (\x,-\p) -- (\x,-\p-0.15) node[below=3pt]{$\lab$};

\foreach \y/\lab in {-3.1416/-\pi, -2.0944/-\tfrac{2\pi}{3}, -1.0472/-\tfrac{\pi}{3},
                      1.0472/\tfrac{\pi}{3}, 2.0944/\tfrac{2\pi}{3}, 3.1416/\pi}
  \draw (-\p,\y) -- (-\p-0.15,\y) node[left=3pt]{$\lab$};



\fill[gray!30,thick] (-\p,\p-\e) rectangle (-\a,\p);
\fill[gray!30,thick] (-\p,-\p + \e) rectangle (-\a,-\p);
\fill[gray!30,thick] (\p,\p-\e) rectangle (\a,\p);
\fill[gray!30,thick] (\p,-\p + \e) rectangle (\a,-\p);
\fill[gray!30,thick] (-\b,-\e) rectangle (\b,\e);

\fill[gray!30,thick] (-\e,-\p) rectangle (\e,\p);

\coordinate (P1) at (-.5,-\p);
  \coordinate (P2) at (-3,0);
  \coordinate (P3) at (-.5,\p);

  \def\a{1}  
  \def\b{1}  
  \def\c{1}  

  \coordinate (C1) at ($(P1) + (0,\a)$);   
  \coordinate (C2) at ($(P2) + (0,\b)$);   
  \coordinate (D1) at ($(P2) + (0,-\b)$);  
  \coordinate (D2) at ($(P3) + (0,-\c)$);   

  \draw[red,thick] (P1) .. controls (C1) and (C2) .. (P2) .. controls (D1) and (D2) .. (P3); 
\end{tikzpicture}
    \caption{A non simply connnected spectrum yielding rigidity from $ \mathbb{Z} _{  + }^{ d}$.  {\it Red:} A loop non-reducible to a point.}
    \label{fig:counter-ex}
\end{figure}

Identify sets with their indicator functions and define the set / indicator function $$ S(u,v) =  \mathbb  T  ^{ 0}(u)\times 
\begin{cases} 
 \mathbb  T  ^{ 0}(v)$  if $u\in J_{0}\\
 \mathbb  T  ^{ \pi}(v)$  if $ u\in J_{\pi }\\
 \mathbb  T  ^{ 0}(v) \mathbb  T  ^{ \pi}(v)$  on $J^{ 0,\pi }.
 \end{cases}$$

The set $ S  $ is connected but not simply connected because the red loop is not contractible to a point (Figure  \ref{fig:counter-ex}). Remark that since there is no horizontal or vertical bar in the spectrum,   Proposition \ref{prop:separable} does not apply.
%

\begin{proposition}
We have for any continuous functions $\gamma _{ 1},\gamma _{ 2} $, 
\begin{align*}
 \inf_{ H\text{\rm{ polynomial}}}\| \gamma _{ 1}(u)\gamma _{ 2}(v)-H(u,v)\|_{  S} = 0.
\end{align*}
\end{proposition}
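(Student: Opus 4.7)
The plan is to reduce the two-dimensional uniform approximation on $S$ to one-dimensional approximations handled by Mergelyan's theorem, exploiting the key structural feature of $S$: on the transition zone $u\in J^{0,\pi}$ the fiber in $v$ is forced into $\mathbb{T}^0\cap\mathbb{T}^\pi$, which will reconcile two competing approximations of $\gamma_2$. Fix $\varepsilon>0$. Since each of $\overline{\mathbb{T}^0}$ and $\overline{\mathbb{T}^\pi}$ is a closed proper arc of $\mathbb{T}$, hence a compact subset of $\mathbb{C}$ with connected complement, Mergelyan's theorem produces polynomials $P_0,P_\pi\in\mathbb{C}[v]$ and $R\in\mathbb{C}[u]$ with $\|P_0-\gamma_2\|_{\mathbb{T}^0}<\varepsilon$, $\|P_\pi-\gamma_2\|_{\mathbb{T}^\pi}<\varepsilon$, and $\|R-\gamma_1\|_{\mathbb{T}^0}<\varepsilon$. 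Next, define a continuous function $\psi:\overline{\mathbb{T}^0}\to[0,1]$ taking the value $1$ on $\overline{J_0\cap\mathbb{T}^0}$, the value $0$ on $J_\pi$, and interpolated arbitrarily on $J^{0,\pi}$; applying Mergelyan once more yields $\phi\in\mathbb{C}[u]$ with $\|\phi-\psi\|_{\overline{\mathbb{T}^0}}<\varepsilon_1$ for a small $\varepsilon_1$ to be chosen at the end.

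Set $H(u,v):=R(u)\bigl[\phi(u)P_0(v)+(1-\phi(u))P_\pi(v)\bigr]\in\mathscr{P}_\Q$. I verify $\|H-\gamma_1\otimes\gamma_2\|_S=O(\varepsilon)$ by examining the three fibers of $S$. For $u\in J_0\cap\mathbb{T}^0,\,v\in\mathbb{T}^0$: $\phi(u)\approx 1$ and $P_0(v)\approx\gamma_2(v)$, while $(1-\phi(u))$ is $O(\varepsilon_1)$, making the $P_\pi$-contribution negligible once $\varepsilon_1$ is small compared to $\|P_\pi\|_\infty$. For $u\in J_\pi,\,v\in\mathbb{T}^\pi$: symmetric. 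For $u\in J^{0,\pi},\,v\in\mathbb{T}^0\cap\mathbb{T}^\pi$: \emph{both} $P_0(v)$ and $P_\pi(v)$ are within $\varepsilon$ of $\gamma_2(v)$, and $|\phi(u)|,|1-\phi(u)|\leq 1+\varepsilon_1$, so the affine combination remains $O(\varepsilon)$-close to $\gamma_2(v)$. Multiplying by $R(u)$ and using $|Rq-\gamma_1\gamma_2|\leq\|R\|_{\infty,\mathbb{T}^0}|q-\gamma_2|+\|\gamma_2\|_\infty|R-\gamma_1|$ gives the claim after choosing $\varepsilon_1$ appropriately in terms of the (already fixed) sup norms of $P_0,P_\pi,R$; letting $\varepsilon\to 0$ concludes.

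The main obstacle is the construction of $\phi$: the polynomial must behave like $1_{J_0}$ on $J_0\cup J_\pi$ yet remain uniformly bounded on the intermediate zone $J^{0,\pi}$, since an uncontrolled $\phi$ there would wreck the approximation on the middle fiber, where the cancellation between $P_0$ and $P_\pi$ relies on their common proximity to $\gamma_2$. A naive Runge-type approximation of $1_{J_0}$ on $J_0\cup J_\pi$ alone gives no control outside this compact set. The remedy is to include $J^{0,\pi}$ into the compact set on which the approximation is performed and to prescribe there a bounded continuous interpolation, so that Mergelyan delivers a polynomial bounded on all of $\overline{\mathbb{T}^0}$. This works precisely because $\mathbb{T}^0$ is a \emph{proper} arc of $\mathbb{T}$, owing to the gap $I_{-\eta,\eta}$, making its complement in $\mathbb{C}$ connected---without such a gap, polynomial approximation of a non-analytic function on the whole torus would fail, as polynomials in $u$ are restricted to boundary values of disc-algebra functions.
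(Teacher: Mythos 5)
Your proof is correct and follows essentially the same route as the paper: one-dimensional polynomial approximation of $\gamma_1$, $\gamma_2$ on the arcs $\mathbb{T}^0,\mathbb{T}^\pi$, a polynomial cutoff $\phi$ approximating a continuous interpolation of $1_{J_0}$ on all of $\mathbb{T}^0$ (so that it stays bounded on the transition zone), the same patched product $H(u,v)=R(u)\bigl[\phi(u)P_0(v)+(1-\phi(u))P_\pi(v)\bigr]$, and the same three-case error estimate using that both $P_0$ and $P_\pi$ approximate $\gamma_2$ on the middle fiber $\mathbb{T}^0\cap\mathbb{T}^\pi$. The only cosmetic difference is your invocation of Mergelyan where the paper cites Runge--Weierstrass, which for a proper closed arc of the circle amounts to the same thing.
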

Runge's theorem says that  for any $u\in  \mathbb  T ,\gamma :  \mathbb  T  \to  \mathbb C  $ continuous and $ \varepsilon >0$ there is $ h$ polynomial such that $ \|\gamma -h\|_{  \mathbb  T  ^{ u}}<\varepsilon .$
%
Let $ h_{ 1}$ a polynomial approximation of $ \gamma _{ 1}$ on $  \mathbb  T  ^{ 0}.$
Let $ h_{ 2}^{ 0}, h_{ 2}^{ \pi }$  polynomial $ \varepsilon $-approximations of $ \gamma _{  2}$ on resp. $  \mathbb  T  ^{ 0},  \mathbb  T  ^{ \pi}$. 
To patch these  functions, introduce a trigonometric polynomial $ \varphi _{ 0}$  that is a $\varepsilon$-approximation on $  \mathbb  T    _{ 0}$  of the continuous function
\begin{align*}
 r(u): =  \begin{cases} 
 1$  on $ J_{ 0}\\
 0 $  on $J_{ \pi }\\
$linear interpolation  in between$.
  \end{cases}
\end{align*}


Remark that $\varphi _{ \pi }: =  1-\varphi _{ 0}$ is a $  \varepsilon$-approximation of $ 1-r$ on $  \mathbb  T  ^{ 0}$.
Define 
\begin{align*}
  H(u,v) = & h_{ 1}(   u)  (h_{ 2}^{ 0}(    v)\varphi _{ 0}(    \barr u) + h_{ 2}^{ \pi }(    v)\varphi _{ \pi }(   \barr  u))
\end{align*}
which is polynomial as sum and product of polynomials.
  Assume without loss of generality  $$ \|\gamma _{ i}\|\leqslant  \frac{ 1}{ 4}, \|\varphi _{ 0}\|_{  \mathbb  T  ^{ 0}},\|\varphi _{ \pi }\|_{   \mathbb  T  ^{ 0}}<  2     , \|h_{ 1}\|_{   \mathbb  T  ^{ 0}}, \|h_{ 2}^{ x}\|_{   \mathbb  T  ^{ x}}<1/2$$  hence each term of the product is bounded by $ 1.$

 We finally have on $ S$
\begin{align*}
  | H(u,v)-&\gamma _{ 1}(u)\gamma _{ 2}(v) |\leqslant  \underbrace{ | \gamma _{ 1}(u)-h_{ 1}(u) |  \mathbb  T  ^{ 0}(u)}_{\leqslant \varepsilon} \times 1+ 1\times B_{ 2}(v) \\
\text{\rm{where }}B_{ 2}(v)  : =  &
  \begin{cases} 
  2\times  | h_{ 2}^{ 0}(v)-\gamma _{ 2}(v) |  \mathbb  T  ^{ 0}(v) +\|\varphi _{ \pi }\|_{ J_{ 0}}\|h_{2}^{ \pi }\|  $  if $u\in J_{ 0}\\
     2\times | h_{ 2}^{ \pi }(v)-\gamma _{ 2}(v) |  \mathbb  T  ^{ \pi}(v) +\|\varphi _{0 }\|_{ J_{ \pi }}\|h_{2}^{ 0}\|$  if $u\in J_{ \pi }\\
      | \varphi_{ 0} (u)h_{ 2}^{ 0}(v) + (1-\varphi _{ 0}(u))h_{ 2}^{ \pi }(v) -\gamma _{ 2}(v)|  \mathbb  T  ^{ 0}(v)  \mathbb  T  ^{ \pi}(v)$  otherwise$.
   \end{cases}
\end{align*}
 
Since $ \|\varphi _{ 0}\|_{ J_{ \pi }},\|\varphi _{ \pi }\|_{ J_{ 0}}\leqslant \varepsilon$ by construction, the   first and second cases are bounded by $ 2\varepsilon + \varepsilon $. Using $ \gamma _{ 2} = (\varphi _{ 0} + (1-\varphi_{ 0})) \gamma _{ 2},$  the third term is bounded on $  u\in \mathbb  T  ^{ 0}$ by 
\begin{align*} 
\varphi_{ 0} (u)    | h_{ 2}^{ 0}(v)-\gamma _{ 2}(v) |   \mathbb  T  ^{ 0}(v) +  (  1-\varphi_{ 0} (u))   | h_{ 2}^{ \pi }(v)- \gamma _{ 2}(v) |   \mathbb  T  ^{ \pi}(v)
<2\varepsilon.\end{align*}
Adding up the error terms gives a bound in $ 4\varepsilon $, this concludes the proof.

 \section{Rigidity phase transition for finite-range random fields}
 \label{sec:finite-range} 
 We now exhibit a distinct mechanism leading to maximal rigidity in a continuous, non-hyperuniform random field with finite dependence range.    
 Let $ \Delta  = 1_{ B(0,1)}  \ast 1_{ B(0,1)} $ the autocorrelation of the unit ball (called triangle covariance in dimension $ 1$), supported by $ B(0,2).$ For some $ f \in \C_{ c}^{ b}(\mathbb{R}^{ d}),$ say that $ \M$ is linearly $f$-rigid   if for some $ h_{ n}\in \C_{ c}^{ b}(  { \rm supp}(f)^{ c}   )$, $n\geqslant 1$, $\mathbf E  | \M(f )-\M(h_{ n}) | ^{ 2}\to 0.$
  {  Recall that $ \Leb$ is Lebesgue measure.}
  \begin{theorem} 
 \label{thm:short-range} 
 Let $ \M $ be a \WSS random measure on $ \mathbb{R}^{ d}$ with covariance 
$
\C= \Delta \Leb$. Then $ \M$ is LMR on $ B(0,\rho )$ if $ \rho <\frac{2}{\pi }$. On the other hand, for $ \rho >2$,  $ \M$ is not $f $-rigid for $ f $ with $  { \rm supp}(f ) = B(0,\rho )   $.\end{theorem}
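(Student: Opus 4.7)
The plan is to invoke the characterization of maximal rigidity in Theorem~\ref{thm:general-rigidity}, extract an entire function via Paley-Wiener from the hypothesis $\sp(\varphi\S)\subset B(0,\rho)$, and reach a contradiction using Jensen's formula on its forced zeros.

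First I compute the spectral density. Since $\C = \Delta\cdot \Leb$ with $\Delta = 1_{B(0,1)} \ast 1_{B(0,1)}$, Plancherel gives (up to a positive constant) $\s(u) = |\widehat{1_{B(0,1)}}(u)|^2 = c_d |J_{d/2}(|u|)|^2/|u|^d$, which is nonnegative, integrable on $\R^d$ (so $\S$ is a finite measure), and vanishes exactly on the spheres $\Sigma_k := \{u : |u| = r_k\}$, where $r_k$ is the $k$-th positive zero of $J_{d/2}$. The vanishing of $\s$ on $\Sigma_k$ is quadratic in the radial direction ($\s(u) \sim c_k(|u|-r_k)^2$ near $\Sigma_k$), since $\widehat{1_{B(0,1)}}$ has only simple zeros on each sphere, and McMahon's expansion gives $r_k \sim k\pi$.

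For the rigidity half ($\rho < 2/\pi$), Theorem~\ref{thm:general-rigidity} reduces the problem to showing that any $\varphi \in L^2(\S) \setminus \{0\}$ satisfies $\sp(\varphi\S) \not\subset B(0,\rho)$. Suppose by contradiction that $\psi := \F(\varphi\S)$ is supported in $B(0,\rho)$. Cauchy-Schwarz gives $\varphi\s \in L^1(\R^d)$, so $\psi$ is continuous with compact support in $B(0,\rho)$, and $\Phi := \varphi\s = \F^{-1}\psi$ extends to an entire function on $\C^d$ with $|\Phi(z)| \leq Ce^{\rho|z|}$ by Schwartz-Paley-Wiener. The constraint $\int|\Phi|^2/\s = \|\varphi\|_{L^2(\S)}^2 < \infty$ then forces the continuous function $\Phi$ to vanish pointwise on every $\Sigma_k$: otherwise, by continuity $|\Phi|^2 \geq c_0 > 0$ in a Euclidean ball around some $u_0 \in \Sigma_k$, and the quadratic radial vanishing of $\s$ produces a nonintegrable singularity $|\Phi|^2/\s \gtrsim (|u|-r_k)^{-2}$ in that ball. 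Now restrict $\Phi$ to an arbitrary real line through the origin: for a unit vector $e \in \R^d$, $\Phi_e(t) := \Phi(te)$ is a one-variable entire function of exponential type $\leq \rho$ vanishing at every $t = \pm r_k$. If $\Phi_e \not\equiv 0$, Jensen's formula gives $\sum_{|z_j|<R}\log(R/|z_j|) \leq \rho R + O(1)$, whereas the forced real zeros at $\pm r_k$ combined with $r_k \sim k\pi$ yield $\sum \geq (2/\pi)R + O(\log R)$ by direct Stirling computation, forcing $\rho \geq 2/\pi$ as $R\to\infty$, contrary to hypothesis. Hence $\Phi_e \equiv 0$ for every $e$, so $\Phi \equiv 0$ on $\R^d$, and since $\s>0$ a.e.\ we conclude $\varphi = 0$ in $L^2(\S)$.

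For the non-rigidity half ($\rho > 2$), the constant $\varphi \equiv 1$ already witnesses the failure. Since $\S$ is finite, $1 \in L^2(\S)$, and $\F(\varphi\S) = \F\S = \C = \Delta\cdot\Leb$ has support $\overline{B(0,2)} \subset B(0,\rho)$. Picking any $f$ with ${\rm supp}(f) = B(0,\rho)$ and $\int f\Delta \neq 0$ (e.g.\ any nonnegative $f$ strictly positive on $B(0,2)$), one gets $\langle \hat h,1\rangle_{L^2(\S)} = (\F\S)(h) = \int h\Delta = 0$ for every $h \in \C_c^b(B(0,\rho)^c)$, while $\langle \hat f,1\rangle_{L^2(\S)} = \int f\Delta \neq 0$; so $\hat f$ does not lie in the $L^2(\S)$-closure of $\{\hat h\}$, i.e.\ $\M$ is not $f$-rigid. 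The technical core of the whole proof is the pointwise vanishing of $\Phi$ on the Bessel spheres $\Sigma_k$ forced by the $L^2(\S)$ constraint alone (as $\varphi$ is only a measurable function); once this is in place, the rest is a standard one-dimensional Jensen zero-counting computation on lines through the origin.
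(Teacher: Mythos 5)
Your proof is correct, and it diverges from the paper's argument at the key technical step of each half, so a comparison is worthwhile. For the rigidity half, after the common reduction (Theorem~\ref{thm:general-rigidity}, Schwartz--Paley--Wiener, and the observation that $\int |\varphi\s|^{2}\s^{-1}<\infty$ forces the entire extension $\Phi$ of $\varphi\s$ to vanish on the Bessel spheres --- which you justify carefully via the simple zeros of the Bessel factor), the paper invokes the isotropization lemma \cite[Lemma 7]{Lr24} to replace $\Phi$ by a radial element of $L^{2}(\s^{-1})\cap \E_{d}(2/\pi-\eta/2)$ and applies the one-dimensional Jensen bound to its radial profile; you instead restrict $\Phi$ to complexified real lines $t\mapsto \Phi(te)$ through the origin, each a one-variable entire function of type $\leqslant\rho$ vanishing at every $\pm r_{k}$, and kill $\Phi$ line by line. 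This is a genuine and self-contained simplification: it works here precisely because the zero set of $\s$ is a union of full spheres, so every line through the origin meets all of them, and no external radialization result is needed. For the non-rigidity half, the paper computes directly from \eqref{eq:cov-measure} that $\M(f)$ is \emph{exactly} decorrelated from every $\M(h)$ with $h$ supported outside $B(0,2+\varepsilon)$ when $f$ is supported in $B(0,\varepsilon)$, so the error never drops below $\textrm{Var}(\M(f))$; your witness $\varphi\equiv 1\in L^{2}(\S)$ (exploiting $\F\S=\Delta\Leb$ supported in $\overline{B(0,2)}$) gives only a positive lower bound on the error, but has the merit of applying to $f$ whose support is all of $B(0,\rho)$, at the price of the extra, correctly flagged hypothesis $\int f\Delta\neq 0$ (neither argument covers literally every $f$ with $\textrm{supp}(f)=B(0,\rho)$, which is presumably not intended by the statement). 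Two cosmetic remarks: the Jensen upper bound should read $\rho R+O(\log R)$ rather than $O(1)$, because of the polynomial factor $C\|z\|^{N}$ and a possible zero at the origin --- this changes nothing; and your zero count from $r_{k}\sim k\pi$ agrees with the paper's $\z(\mathscr{j})=\z(\cos)=2/\pi$ once the zeros are counted in pairs $\pm r_{k}$.
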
    
  Hence the field is rigid on small balls but becomes uncorrelated beyond range 2, revealing a genuine rigidity phase transition.
  Let us explicit what it  means for a continuous Gaussian field:
  
  \begin{example}
  \label{ex:gauss-rigid}
  Let $ \X$ the unique centred Gaussian random field with covariance function 
\begin{align*}
\mathbf{E}(\X(x)\X(y)) = \Delta (x-y);x,y\in \mathbb{R}^{d}.
\end{align*}
By standard results, $ \X$ has continuous sample paths (see  \cite{AT07}).
Then the previous conclusion holds: for $ y_{0}\in B(0,2 /\pi ),\varepsilon >0$ there are deterministic points $ x_{1},\dots ,x_{m}\in B(0,2/\pi )^{c}$ and coefficients $ h_{1},\dots ,h_{m}\in  \mathbb R $ with 
\begin{align*}
\mathbf{E}\left|\X(y_{0})-\sum_{k}h_{k}\X(x_{k})
\right|^{2}<\varepsilon .
\end{align*} 
On the other hand,   the $ \X(x ),\|x\|>2$ are independent of $ \X(0).$

  \end{example} 
  
From a modelling perspective, this field is one of the most basic example of a stationary continuous  field, both conceptually and on the practical aspect of simulation or estimation.
 To the author's knowledge, this extremely rigid behavior has not been observed anywhere, even in dimension $ 1$, for such finite range models, although it could likely be useful for reconstruction in signal processing. In dimension $ d = 1, \Delta (x) = (1-|x|/2)1_{ |x|\leqslant 2}$, but the expression is less straightforward in higher dimensions. Similar results hold for simpler continuous functions, such as $ (1-\|x\|)1_{ \|x\|\leqslant 1}$.
Replacing $ \Delta  $ with $ \Delta^{\ast   k} $ for $ k\in \mathbb{N}$, one can also derive similar examples with arbitrarily high regularity. 

The coefficients $ h_{ k}$ in Example  \ref{ex:gauss-rigid} can be  computed with classical Fourier analysis in $ L^{ 2}( \hat \Delta )$ after applying \eqref{eq:phase} to approximations of Dirac masses in $ y_{ 0}$ and the $ x_{ k}'s$, see Section \ref{sec:reconstruction}. Note that some $ x_{ i}'s$ will likely have to lie in $ B(y_{ 0},2)^{ c}$, even though the $ \X(x_{ i})$ are independent from $\X(y_{ 0}) $, to compensate for the potential error induced by the $ x_{ i}$'s in $ B(y_{ 0},2).$

It is worth stressing that the observed rigidity does not stem from finite-range dependence, from smoothness of the field, or from integrability of the covariance.
\begin{proposition}
Let  $\X$ be a random field with covariance
 $\Delta  (x)^{q}$ for some $ q\geqslant 2$.  Then for $ \varepsilon >0,$ $\X$ is not linearly $ 1_{ B(0,\varepsilon )}$-rigid   (and a fortiori not LMR).
\end{proposition}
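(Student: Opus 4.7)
The plan is to invoke the criterion from the proof of Theorem~\ref{thm:general-rigidity}, which reduces linear $1_{B(0,\varepsilon)}$-rigidity to a Hilbert-space orthogonality question in $L^2(\S)$. Specifically, it suffices to produce a witness $\varphi \in L^2(\S)$ such that $\F(\varphi\S)$ is supported in $B(0,\varepsilon)$ and $\langle \varphi, \hat 1_{B(0,\varepsilon)}\rangle_{L^2(\S)}\neq 0$: such a $\varphi$ lies in the $L^2(\S)$-orthogonal complement of $H=\{\hat h:h\in \C_c^b(B(0,\varepsilon)^c)\}$ while pairing non-trivially with $\hat 1_{B(0,\varepsilon)}$, so $\hat 1_{B(0,\varepsilon)}$ cannot lie in $\overline{H}^{L^2(\S)}$ and $\M(1_{B(0,\varepsilon)})$ is not $L^2(\P)$-approximable. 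I would take the explicit candidate $\varphi := \hat g / s$, where $s=\hat{\Delta^q}$ is the spectral density and $g\in \C_c^\infty(B(0,\varepsilon))$ is any non-negative bump with $\int g > 0$. Granting $\varphi\in L^2(\S)$, one has $\varphi\S = \hat g\,d\xi$ Schwartz, so $\F(\varphi\S)$ is a constant multiple of $g(-\cdot)$ and supported in $B(0,\varepsilon)$; Plancherel gives
\[
\langle \varphi,\hat 1_{B(0,\varepsilon)}\rangle_{L^2(\S)} = \int \hat g(\xi)\,\overline{\hat 1_{B(0,\varepsilon)}(\xi)}\,d\xi = (2\pi)^d \int g\,dx > 0,
\]
settling both conditions. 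The whole argument therefore reduces to verifying $\int |\hat g|^2 s^{-1}\,d\xi < \infty$.

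This integrability relies on two features of $s$. The first is strict positivity on $\R^d$: since $\hat\Delta = |\hat 1_{B(0,1)}|^2$ is non-negative and vanishes only on a meagre union of concentric spheres (zeros of $J_{d/2}$), the set $\{\hat\Delta > 0\}$ is open and dense, so for every $\xi\in\R^d$ the two dense open sets $\{\hat\Delta > 0\}$ and $\xi - \{\hat\Delta > 0\}$ meet, and continuity forces $\hat\Delta\ast\hat\Delta(\xi) > 0$; induction then yields $s = (2\pi)^{-d(q-1)}\hat\Delta^{\ast q} > 0$ on $\R^d$ for every $q\geq 2$. This is exactly what breaks down at $q=1$, where the zeros of $\hat\Delta$ on the spheres $\{|\xi|\asymp k\pi/2\}$ are precisely the mechanism producing the rigid regime of Theorem~\ref{thm:short-range}.

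The second feature, and the technical heart of the proof, is a polynomial lower bound $s(\xi)\geq c(1+|\xi|)^{-N}$ for some $N=N(d,q)$; combined with the Schwartz decay of $\hat g$, which dominates any polynomial growth of $s^{-1}$, this ensures the integral is finite. To establish the bound one can fix an annulus $K = \{a\leq |\eta|\leq b\}$ with $[a,b]$ lying between $0$ and the first positive zero of $J_{d/2}$ and with radial width $2b$ exceeding the oscillation period $\pi$ of $\hat\Delta$, so that $\hat\Delta\geq c_0 > 0$ on $K$. Then $\hat\Delta\ast\hat\Delta(\xi)\geq c_0\int_{\xi-K}\hat\Delta(\zeta)\,d\zeta$, and averaging the oscillatory asymptotic $\hat\Delta(\zeta)\sim c|\zeta|^{-(d+1)}\cos^2(\cdot)$ over the fixed-thickness annulus $\xi - K$ around $\xi$ washes out the isolated zeros of $\hat\Delta$ and yields a lower bound of order $|\xi|^{-(d+1)}$ at infinity; together with strict positivity on compact sets this gives the bound at $q=2$, and the identity $\hat\Delta^{\ast q} = \hat\Delta\ast\hat\Delta^{\ast(q-1)}$ propagates it to $q>2$ by induction. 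This oscillation-averaging estimate is the main obstacle of the proof; once it is in hand, the rest of the argument is essentially formal.
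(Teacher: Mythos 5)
Your proposal is correct and follows essentially the same route as the paper: both produce a witness $\varphi =\psi /\s$ with $\F\psi $ supported in $B(0,\varepsilon )$ (the paper takes $\psi =J_{d}(\cdot\,\varepsilon /2)^{2}$, you take $\psi =\hat g$ for a smooth bump $g$), and both hinge on the same polynomial lower bound $\s(u)\geqslant c(1+\|u\|)^{-d-1}$ obtained by convolving the oscillating Bessel factor against a fixed set on which it is bounded below, so that the zeros of $\cos ^{2}$ are averaged out. The only substantive difference is that you establish non-$1_{B(0,\varepsilon )}$-rigidity by directly computing the pairing $\int \hat g\,\overline{\hat 1_{B(0,\varepsilon )}}\,du=(2\pi )^{d}\int g>0$, whereas the paper invokes \cite[Theorem 8]{Lr24} via $\psi (0)\neq 0$; your version is self-contained and equally valid.
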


\begin{proof} Let $ J_{d}$ denote the Fourier transform of  the unit ball indicator. It is given by
\begin{align}
\label{def:Jd}
J_{d} (u):= \frac{ B_{d/2}( \|u\| )}{\|u\|^{d/2}},u\in \mathbb{R},
\end{align}
where $ B_{d/2}$ is the Bessel function of the first kind of order $ d/2$; in particular $ J_{ 1} =2 \text{\rm{\color{black} sinc}}$. Also, $ J_{ d}$ is entire as the Fourier transform of a bounded function with compact support.  We have   as $ u\to \infty $  the classical asymptotics 
\begin{align}
\label{eq:bd-bessel}
J_{d}(u) = c' \|u\| ^{-(d + 1)/2}\cos( \|u\|-c_{d})(1 +O(\|u\|^{-1}) ).
\end{align}The spectral density is  $$\s: =  \widehat{ \Delta ^{ q}} =  (J_{ d}^{ 2})^{  \ast  q}: =  J_{ d}^{ 2} \ast  \dots  \ast  J_{ d}^{ 2}.$$  Since $ J_{ d}$ is continuous with $ J_{ d}(0)>0$, we have $ J_{ d}^{ 2}\geqslant c1_{ [-c,c]^{ d}}$  for some $ c>0$ hence $(J_{ d}^{ 2})^{  \ast  q}\geqslant a_{ q}1_{ [-b_{ q},b_{ q}]^{ d}} $ for some $ a_{ q},b_{ q}>0$, and, using  \eqref{eq:bd-bessel}, for some $ c_{ q}>0,$
\begin{align*}
\s(u) \geqslant a_{ q-1}J_{ d}^{ 2}  \ast  1_{ [-b_{ q-1},b_{ q-1}]^{ d}}\geqslant c _{ q}(1 + \|u\|)^{ -d-1}.
\end{align*} Define
\begin{align*}
 \psi (u) = J_{ d}(u\varepsilon /2)^{ 2}
\end{align*}
whose Fourier transform   is bounded and supported by $ B(0,\varepsilon ).$
Since $ \psi (u)^{ 2} = O(\|u\|^{ -2(d + 1)}),$
\begin{align*}
 \int_{} \psi ^{ 2} \s^{ -1}<\infty .
\end{align*}
Hence $ \varphi  = \psi /\s\in L^{ 2}(\s)$ with $ \sp(\varphi \s) = \sp(\psi )\subset B(0,\varepsilon )$, which by Theorem \ref{thm:general-rigidity}
implies no LMR on $ B(0,\varepsilon ).$ The non-$ 1_{ B(0,\varepsilon  )}$-rigidity is a consequence of  \cite[Theorem 7]{Lr24}, using $ \psi (0)\neq 0$.
\end{proof}

On the other hand, Theorem  \ref{thm:short-range} remains valid if the covariance $ \Delta   $ is convolved with another covariance $ \C'$, since a convolution can only increase the zero set of the Fourier transform. 

\begin{remark}

Fields which are a.s. analytic, like the Bargman-Fock field, are also rigid but for  analycity reasons. This is a different phenomenon, they are in particular rigid on any set whose complement has a non-empty interior, and there is no phase transition.

More generally, one can show with Theorem \ref{thm:stealthy-cone} that if the spectral measure vanishes on a cone with non-empty interior, then the corresponding random measure is perfectly interpolable  \underline {from} any non-empty open set, reminiscent of the behaviour of random fields which are a.s. entire functions.  
\end{remark}

  \subsection{Proof of Theorem  \ref{thm:short-range}}  
 Define $ \E_{ d}(R)$ as the class of entire functions $ \psi $ of $ d$ variables of  type $ R$, i.e. such that  for some $ C,N,$
\begin{align*}
  | \psi (z) | \leqslant C \| z \| ^{ N}\exp(R \| z \| ),z\in  \mathbb C ^{ d}.
\end{align*}

 {It is a classical consequence of Jensen's identity in complex analysis   \cite[Chap.1]{Koosis} and the Schwartz'Paley-Wiener theorem (\cite[Th. 7.22]{Rudin2})  that the constant $ R$ is related to the density of zeros of a radial function.    For $ \psi :  \mathbb C  \to  \mathbb C $, define $ n_{ T}(\psi ) = \#\{z\in B_{  \mathbb C }(0,T):\psi (z) = 0\}$ and
\begin{align*}
 \z(\psi ) = \liminf_{ T\to \infty }\frac{n_{ T}(\psi )}{T }.
\end{align*}}

           \begin{lemma}[Jensen]
 \label{lm:jensen}
Let $ \psi \in \E_{ 1}(R)  \setminus \{0\}$. Then   $\z(\psi ) \leqslant R.$
 \end{lemma}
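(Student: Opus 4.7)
The plan is to apply Jensen's classical formula on disks $\{|z|\leq r\}$ and exploit the exponential-type bound to control the integrated counting function $\int_0^r n(t)/t\,dt$, where $n(t) := n_t(\psi)$. The linear density $\z(\psi)$ is then recovered by a monotonicity/contradiction step.

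First, reduce to the case $\psi(0)\neq 0$. If $\psi$ vanishes to order $m$ at the origin, write $\psi(z)=z^m\tilde\psi(z)$; the polynomial factor is absorbed in the prefactor $\|z\|^N$ of the definition of $\E_1(R)$, so $\tilde\psi\in\E_1(R)$ with $\tilde\psi(0)\neq 0$. Replacing $n(t)$ by $n(t)-m$ shifts $n_T/T$ by an $O(1/T)$ term and hence leaves $\z(\psi)$ unchanged.

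The heart of the argument is Jensen's formula
\begin{align*}
\int_0^r \frac{n(t)}{t}\,dt = \frac{1}{2\pi}\int_0^{2\pi}\log|\psi(re^{i\theta})|\,d\theta - \log|\psi(0)|.
\end{align*}
Injecting the pointwise bound $|\psi(re^{i\theta})|\leq Cr^N e^{Rr}$ coming from $\psi\in\E_1(R)$ and averaging in $\theta$ yields
\begin{align*}
\int_0^r \frac{n(t)}{t}\,dt \;\leq\; Rr + O(\log r), \qquad r\to\infty.
\end{align*}

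The remaining step, which I expect to be the main (if mild) obstacle, is to convert this integrated upper bound into control of $\liminf_T n(T)/T$. Argue by contradiction: if $\z(\psi)>R$, pick $c$ with $R<c<\z(\psi)$; by the very definition of $\liminf$ there exists $T_0$ such that $n(t)\geq ct$ for every $t\geq T_0$, and hence
\begin{align*}
\int_0^r \frac{n(t)}{t}\,dt \;\geq\; \int_{T_0}^r c\,dt \;=\; c(r-T_0).
\end{align*}
Combined with the Jensen bound this forces $(c-R)r\leq cT_0+O(\log r)$, which is absurd as $r\to\infty$ since $c>R$. Thus $\z(\psi)\leq R$. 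The delicate point is really this last conversion: the use of $\liminf$ (as opposed to $\limsup$) is what turns an averaged upper bound on $\int_0^r n(t)/t\,dt$ into a pointwise density estimate, by producing a linear lower bound on $n$ valid for \emph{all} large $t$.
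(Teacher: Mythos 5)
Your proof is correct and matches what the paper intends: the paper gives no proof of this lemma, citing it as a classical consequence of Jensen's identity (Koosis), and your argument is precisely that classical route — Jensen's formula plus the type bound gives $\int_0^r n(t)t^{-1}\,dt \leqslant Rr + O(\log r)$, and the $\liminf$ in the definition of $\z$ is correctly exploited to get the sharp constant $R$ (a $\limsup$ version would only yield $eR$ by this method). The reduction to $\psi(0)\neq 0$ and the observation that subtracting the multiplicity $m$ at the origin does not change $\z(\psi)$ are both handled properly.
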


 This lemma retrieves a classical example of uniqueness pair: for any $ R>0,z<R^{ -1}$, $ A = B(0,R)$ forms a uniqueness pair with $ \hat A = \cup _{ k\in \mathbb{N}^{ *}} \partial B(0,kz)$, see Table  \ref{table:1}-(e).
   \begin{proof}[Proof of Theorem  \ref{thm:short-range}]
 
 The starting point is that the spectral density is $ \s =  J_{ d}^{ 2}$. Let the radial part $  \mathscr  j(\|u\|) =J_{ d}(u) $. From  \eqref{eq:bd-bessel} we have
\begin{align*}
\z(  \mathscr  j  )=\z(\cos )=\frac{2}{\pi }.
\end{align*}
We prove by contradiction that there is LMR on $ A = B(0,2/\pi-\eta  )$ for $ \eta >0$. To exploit Theorem \ref{thm:general-rigidity}, we therefore assume that there is $ \varphi \in L^{ 2}(\s)  \setminus \{0\}$ with $ \sp(\varphi \s)\subset B(0,2/\pi -\eta )$.   Let $ \psi  = \varphi \s.$
We have  $ \psi \in L^{ 2}(\s^{ -1})$ because 
\begin{align*}
 \int_{}\psi ^{ 2}/\s = \int_{}\varphi ^{ 2}\s<\infty ,
\end{align*}
and $ \psi \in \E_{ d}( 2/\pi -\eta /2)$ by assumption.
An important ingredient is  \cite[Lemma 7]{Lr24} that  {  we formulate in the current context:

\begin{lemma}
Let $ \M$ a weakly stationary measure whose spectral density $ \s$ is invariant under rotations. If there exists $ \psi \in L^{ 2}(\s^{ -1})\cap \E(B(0,\pi /2-\eta ))  \setminus {\{0\}}$, there exists a non-null function $ \psi _{ 0}\in L^{ 2}(\s^{ -1})\cap \E(B(0,2/\pi -\eta /2))$ that is also invariant under rotations.
\end{lemma}
}Hence  $ \psi _{ 0}(u) =  \tilde \psi (\|u\|)$ for some function $ \tilde \psi $ on $ \mathbb{R}_{  + }$. Since $ \psi_{ 0} $ is entire in $ d$ variables,  $ \tilde \psi$ is entire, and furthermore $ \tilde \psi $ is of exponential type $ 2/\pi  - \eta /2 $:
\begin{align*}
 |  \tilde \psi (\|u\|) |  =  | \psi (u) | \leqslant C\exp( (2/\pi  - \eta /2)  \|u\|).
\end{align*}
By Lemma \ref{lm:jensen},
\begin{align*}
 \z( \tilde \psi )\leqslant \frac{2}{\pi }  -\eta /2  .
\end{align*}
This actually is in contradiction with the fact that $ \tilde \psi $ vanishes on each $ 0$ of $  \mathscr  j$ (because $ \psi _{ 0}\in L^{ 2}(\s^{ -1})$), which yields 
\begin{align*}
 \z( \tilde \psi )\geqslant  \frac{2}{\pi }.
\end{align*}
This concludes the proof of LMR. 
   
    {\bf Non-rigidity:} The non-rigidity is pretty obvious given the fact that, due to the finite range, for $ \varepsilon >0,f \in \mathscr C_{c}^{ b} (B(0, \varepsilon ))$, $ \X(f )$ is decorrelated from $ \X (h),h\in \mathscr C_{c}^{ b}(B(0,2 + \varepsilon )^{ c})$:  \eqref{eq:cov-measure} yields
\begin{align*}
 \textrm{Cov}\left(\X(f ),\X (h)\right)= \int_{}\mathbf{1}_{\{\|x\|\leqslant 2,\|x+y\|\geqslant 2+\varepsilon ,\|y\|\leqslant \varepsilon \}}f (y)h(x+y)\C(x)dxdy=0.
\end{align*}
Any estimator of the form $ \hat \X(f )=\X (h)$ satisfies 
\begin{align*}
 \textrm{Var}\left(\X(f )- \hat \X(f ) \right) \geqslant  \textrm{Var}\left(\X(f )\right)
\end{align*}
 which makes it impossible for the error to approach $ 0.$
   \end{proof}
 For this phenomenon to occur, the zeros of the spectral measure need have a positive density. For continuous point processes, the only processes we are aware of as having such a behaviour are  lattices independently perturbed by i.i.d.   variables having a discrete support, which are in fact very close to the class of discrete fields.

%
%
%
 
\section{Appendix: Visual representation of results} 
\label{app}

See next page.

 \section*{Data availability statement}  
 This study did not generate or analyse any datasets, as it is based on theoretical analysis and/or existing literature. Therefore, data sharing is not applicable.      \\

  \section*{Conflict of interest}   
 The authors declare that they have no known competing financial interests or personal relationships that could have appeared to influence the work reported in this paper.
     \pagestyle{empty}
 

\vspace{-1cm} 

\begin{table}
\begin{tabular}{  | m{.5cm}| m{4cm}  | m{6cm} |  m{5cm} | }
\hline
&Spectrum & Interpolation & Direct space \\
\hline
a&\begin{tikzpicture}[scale = .6]

\draw[black, line width=1.5mm] (-2,0) -- (0.8,0);   
\draw[black, line width=1.5mm] (1,0) -- (3,0);    

\end{tikzpicture}

1D gap
& \begin{tikzpicture}[scale = .5]


\fill[gray!20] (-5,-0.1) rectangle (0,0.1);
\draw[->] (-5,0) -- (5,0);

\draw (0,0.1) -- (0,-0.1);
\node at (0,-0.5) {0};

\foreach \x in {-4.5,-3.5,-3.2,-2.6,-2.3,-2.0,-1.4,-1.1,-0.5,-0.2}
    \fill (\x,0) circle (2pt);

\draw[->, thick, bend left=45] (-2,0.4) to (2,.5);

\end{tikzpicture}
 & 
$\mathbb{R}$ or $ \mathbb{Z} $ : Krein theoreom, Theorem \ref{thm:SW} \\
\hline
b&
\includegraphics[width = 3cm]{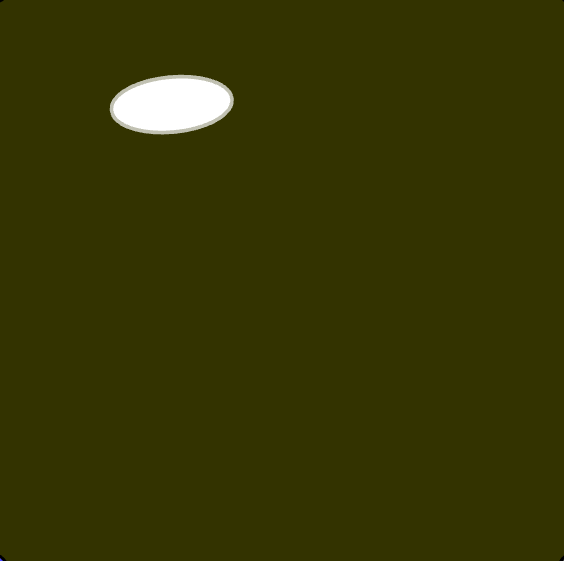}

Spectral hole

 \begin{tikzpicture}[scale=0.5, font=\small]

\def\p{3.1416}      
\def\a{1.0472}      
\def\b{2.0944}      
\def\e{.1}
\fill[gray!10,draw=white] (-\p,-\p) rectangle (\p,\p);

\fill[black,draw=white] (-\p,-\e) rectangle (\p,\e);


\end{tikzpicture}

&\begin{tikzpicture}

\def\thetaMin{-100}   
\def\thetaMax{100}    
\def\rMin{0.5}         
\def\rMax{3}         
\def\nPoints{50}     

\fill[gray!20] (0,0) -- (\thetaMin:\rMax) arc (\thetaMin:\thetaMax:\rMax) -- cycle;

\draw[gray, thick] (0,0) -- (\thetaMin:\rMax);
\draw[gray, thick] (0,0) -- (\thetaMax:\rMax);

\foreach \i in {1,...,\nPoints}{
    \pgfmathsetmacro{\r}{rnd*(\rMax-\rMin)+\rMin}
    \pgfmathsetmacro{\theta}{rnd*(\thetaMin-\thetaMax)+\thetaMax} 
    \fill (\theta:\r) circle (.8pt);
}

\draw[->, thick, bend right=45] (1,1) to (-2,1);
\end{tikzpicture}

& $ \mathbb{R}^{ d}:$ Stealthy processes, Theorem \ref{thm:stealthy-cone}\\
\hline
c&
        \includegraphics[width=3cm]{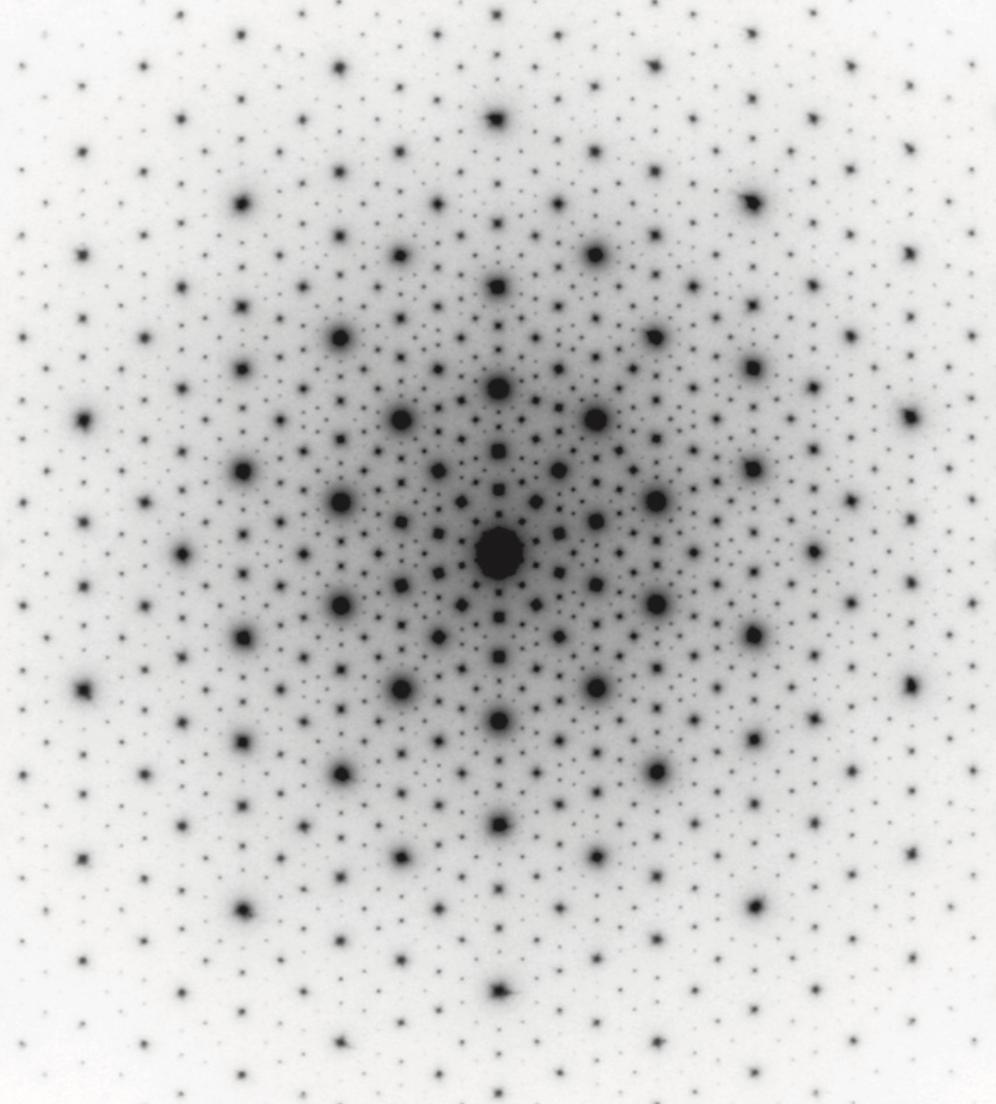}
        Purely atomic \tiny \copyright   Conradin Beeli,  \cite{baake-book}.    &

\begin{tikzpicture}

\def\thetaMin{-10}   
\def\thetaMax{10}    
\def\rMin{0.5}         
\def\rMax{3}         
\def\nPoints{20}     

\fill[gray!20] (0,0) -- (\thetaMin:\rMax) arc (\thetaMin:\thetaMax:\rMax) -- cycle;

\draw[gray, thick] (0,0) -- (\thetaMin:\rMax);
\draw[gray, thick] (0,0) -- (\thetaMax:\rMax);

\foreach \i in {1,...,\nPoints}{
    \pgfmathsetmacro{\r}{rnd*(\rMax-\rMin)+\rMin}
    \pgfmathsetmacro{\theta}{rnd*(\thetaMin-\thetaMax)+\thetaMax} 
    \fill (\theta:\r) circle (.8pt);
}

\draw[->, thick, bend right=45] (1,0) to (-2,0);
\end{tikzpicture}

&$ \mathbb{R}^{ d}$: Quasicrystals, Theorem \ref{thm:quasi}, \\
\hline
d&

\begin{tikzpicture}[scale=1.5]
\fill[black,draw=white] (-1,-1) rectangle (1,1);


\draw[white, very thick, domain=-1:1, samples=10, smooth, variable=\x]
plot (\x,{0.75*(1 - \x*\x)^5 + .1});

\draw[white, very thick, domain=-1:1, samples=10, smooth, variable=\y]
plot ({-0.75*(1 - \y*\y)^4 + .6},-\y);
\end{tikzpicture}

Simply connected  ($  \mathbb  T  ^{ d}$)

&
\hspace{1cm}  
\begin{tikzpicture}[scale=0.1]
  \def\N{20} 
  \foreach \i in {0,...,19}{
    \foreach \j in {0,...,19}{
      \pgfmathparse{int(random(0,1))} 
      \ifnum\pgfmathresult=1
        \fill[black] (\i,\j) rectangle ++(1,1);
      \else
        \fill[white] (\i,\j) rectangle ++(1,1);
      \fi
    }
  }

  \draw[red!80!black,thin] (0,0) rectangle (\N,\N);
  \foreach \k in {1,...,19} {
    \draw[red!80!black,thin] (\k,0) -- (\k,\N);
    \draw[red!80!black,thin] (0,\k) -- (\N,\k);
  }

\draw[->, thick, bend right=25] (5,8) to (-5,8);
\draw[->, thick, bend right=25] (16,8) to (26,8);
\draw[->, thick, bend right=25] (12,3) to (12,-5);
\draw[->, thick, bend right=25] (12,18) to (12,25);
\end{tikzpicture} & 

$ \mathbb{Z} ^{ d}: $ Periodic fields, Theorem \ref{thm:BSW-d}
\\
\hline
e&
 \begin{tikzpicture}[scale=0.5, font=\small]

\def\p{3.1416}      
\def\a{1.0472}      
\def\b{2.0944}      
\def\e{.3}
\fill[black,draw=white] (-\p,-\p) rectangle (\p,\p);




\def\gr{gray!10}
\fill[\gr,thick] (-\p,\p-\e) rectangle (-\a,\p);
\fill[\gr,thick] (-\p,-\p + \e) rectangle (-\a,-\p);
\fill[\gr,thick] (\p,\p-\e) rectangle (\a,\p);
\fill[\gr,thick] (\p,-\p + \e) rectangle (\a,-\p);
\fill[\gr,thick] (-\b,-\e) rectangle (\b,\e);

\fill[\gr,thick] (-\e,-\p) rectangle (\e,\p);

\end{tikzpicture}

Not simply connected counter-example
&
\begin{tikzpicture}

\def\thetaMin{0}   
\def\thetaMax{90}    
\def\rMin{0.5}         
\def\rMax{3}         
\def\nPoints{20}     

\fill[gray!20] (0,0) -- (\thetaMin:\rMax) arc (\thetaMin:\thetaMax:\rMax) -- cycle;

\draw[gray, thick] (0,0) -- (\thetaMin:\rMax);
\draw[gray, thick] (0,0) -- (\thetaMax:\rMax);

\foreach \i in {1,...,\nPoints}{
    \pgfmathsetmacro{\r}{rnd*(\rMax-\rMin)+\rMin}
    \pgfmathsetmacro{\theta}{rnd*(\thetaMin-\thetaMax)+\thetaMax} 
    \fill (\theta:\r) circle (.8pt);
}

\draw[->, thick, bend right=45] (1,1) to (-1,1);
\end{tikzpicture}

&$ \mathbb{R}^{ 2}$: Counter-example, Section \ref{sec:counter-ex-SC}\\
\hline
f&
\includegraphics[width = 3cm]{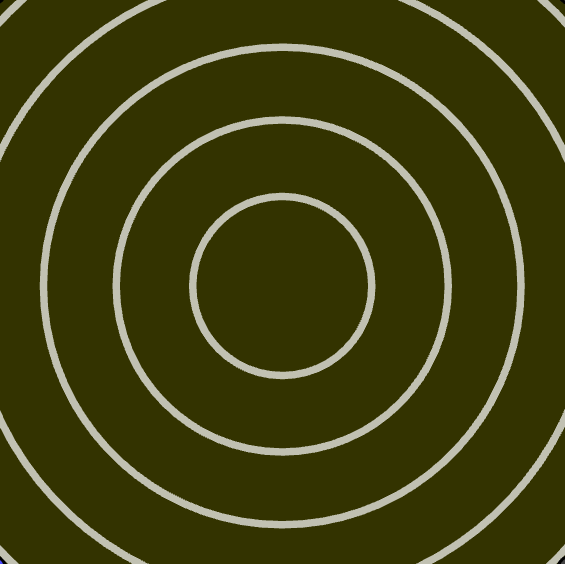}

Concentric circles
&
\hspace{.3cm}  
\includegraphics[width = 5cm]{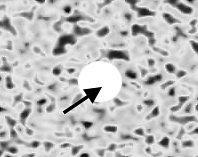}
&$\mathbb{R}^{ d}$: Triangle Gaussian field, Theorem \ref{thm:short-range}\\
\hline
\end{tabular}
\caption{Uniqueness pairs and interpolation}
\label{table:1}
\end{table}
~
\newcommand{\etalchar}[1]{$^{#1}$}


\end{document}